\newcounter{daggerfootnote}
\newtheorem{prop}{Proposition}[section]
\newtheorem{lemma}[prop]{Lemma}
\newtheorem{cor}[prop]{Corollary}
\newtheorem{teo}[prop]{Theorem}
\newtheorem*{main-teo}{Main Theorem}
\newtheorem{conjecture}[prop]{Conjecture}
\theoremstyle{definition}
\newtheorem{Def}[prop]{Definition}
\newtheorem*{Def*}{Definition}
\newtheorem*{prop*}{Proposition}
\newtheorem*{lema*}{Lemma}
\newtheorem{obs}[prop]{Remark}
\newtheorem*{obs*}{Remark}
\newtheorem*{cor*}{Corollary}
\newtheorem*{teo*}{Theorem}
\newtheorem{es}[prop]{Example}
\newtheorem*{es*}{Example}
\numberwithin{equation}{section}
\DeclareMathOperator{\Ker}{ker}
\DeclareMathOperator{\im}{Im}
\DeclareMathOperator{\Hom}{Hom}
\DeclareMathOperator{\Stab}{stab}
\DeclareMathOperator{\GL}{GL}
\DeclareMathOperator{\rk}{Rk}
\DeclareMathOperator{\Sym}{Sym}
\DeclareMathOperator{\Sing}{Sing}
\DeclareMathOperator{\SL}{SL}
\DeclareMathOperator{\codim}{codim}
\DeclareMathOperator{\Gr}{Gr}
\DeclareMathOperator{\Mat}{Mat}
\DeclareMathOperator{\SO}{SO}
\DeclareMathOperator{\Q}{Q}
\DeclareMathOperator{\Spin}{Spin}
\DeclareMathOperator{\Cl}{Cl}
\DeclareMathOperator{\conj}{conj}
\DeclareMathOperator{\Pf}{Pf}
\DeclareMathOperator{\diam}{diam}
\DeclareMathOperator{\OG}{OGr}
\DeclareMathOperator{\Res}{Res}
\DeclareMathOperator{\Terr}{Terr}
\DeclareMathOperator{\Sp}{Sp}
\newcolumntype{P}[1]{>{\centering\arraybackslash}p{#1}}
\definecolor{coloreteorema}{HTML}{E8EDFA}
    {\endMakeFramed}
\title{Identifiability and singular locus\\ of secant varieties to spinor varieties}
\author{Vincenzo Galgano\footnote{Dipartimento di Matematica, Università di Trento, Via Sommarive 14 38123 Trento, Italy; ORCID: 0000-0001-8778-575X (\texttt{vincenzo.galgano@unitn.it})}}
\date{}
\begin{document}

\maketitle

\begin{abstract}
	\noindent In this work we analyze the $Spin$--structure of the secant variety of lines $\sigma_2(\mathbb S)$ to a Spinor variety $\mathbb S$ minimally embedded in its spin representation. In particular, we determine the poset of the $Spin$--orbits and their dimensions. We use it for solving the problems of identifiability and tangential-identifiability in $\sigma_2(\mathbb S)$, and for determining the second Terracini locus of $\mathbb S$. Finally, we show that the singular locus $Sing(\sigma_2(\mathbb S))$ contains the two $Spin$--orbits of lowest dimensions and it lies in the tangential variety $\tau(\mathbb S)$: we also conjecture what it set-theoretically is.\\
	\hfill\break
	{\bf Keywords:} secant variety, Spinor variety, homogeneous space, Clifford apolarity, identifiability, tangential-identifiability, Terracini locus, singular locus.\\
	\hfill\break
	{\bf Mathematics Subject Classification codes (2020):} 14M17, 14N07, 15A66. 	
\end{abstract}

\tableofcontents

\section*{Acknowledgement}

This work is part of the author's Ph.D. thesis supervised by Alessandra Bernardi and Giorgio Ottaviani, whom the author sincerely thanks for their patient guidance and precious discussions. The author also thanks Laurent Manivel and Mateusz Micha\l{}ek for their helpful suggestions. The author is member of the italian INdAM group GNSAGA.

\section*{Introduction}

\indent \indent Given an irreducible non-degenerate projective variety $X\subset \mathbb P^M_\mathbb C$ and a point $f \in \mathbb P^M_\mathbb C$, the {\em $X$--rank} of $f$ with respect to $X$ is the minimum number of points of $X$ whose linear span contains $f$. In particular, point of $X$ have $X$--rank $1$. Then, starting from $X$, for any $r\in \mathbb Z_{>0}$ one defines the {\em $r$--th secant variety} $\sigma_r(X)\subset \mathbb P^M_\mathbb C$ as the Zariski closure of points of $\mathbb P^M_\mathbb C$ of $X$--rank at most $r$.\\
\indent Secant varieties have been studied for decades, but several aspects of their geometry are still mysterious and difficult to handle with. Even computing their dimensions is a hard task and a current topic of research. Two crucial aspects of secant varieties which are fundamental for applications and are still unknown in general are the {\em identifiability} and the {\em singularity} of their points. Namely, a given point $f\in \mathbb P^M_\mathbb C$ of $X$--rank $r$ is identifiable if it admits a unique decomposition as sum of $r$ elements of $X$. On the other hand, singular points in $\sigma_r(X)$ are those such that the dimension of the tangent spaces at these points overcome the dimension of the secant variety: conversely, the points for which such dimensions coincide are smooth. Of course, identifiability and singularity are of great impact to both theory and applications. Results having the smoothness (ie. non-singularity) of the variety among the hypotheses are a dense in geometry: for instance, in \cite[Prop. 5.1]{COV} the authors give a criterion for identifiability of specific tensors in a secant variety under the assumption that these tensors are smooth. In fact, identifiability and smoothness are quite related each other and often one notion suggests the other (as in this work), although in general both implications admit counterexamples.\\
\indent Owned to the above, determining the {\em singular locus} $\Sing(\sigma_r(X))$ of a secant variety $\sigma_r(X)$ is a central problem. Classically, if $\sigma_r(X)$ is not a linear space, it is known that the singular locus $\Sing(\sigma_r(X))$ contains the secant variety $\sigma_{r-1}(X)$ but only in few cases $\Sing(\sigma_r(X))$ is actually determined. For instance, the case $r=2$ for {\em Segre varieties} is solved in \cite{michalek2015secant}; for {\em Veronese varieties} the cases $r=2$ and $r=3$ are solved in \cite{kanev1999chordal} and \cite{kangjin2018} respectively, while partial results for higher cases $r\geq 4$ are obtained in \cite{kangjin2021}; partial results for the $2$--nd secant variety of Grassmannians and other cominuscule varieties appear in \cite{manivelmichaleksecants}.\\
\indent Segre varieties, Veronese varieties and Grassmannians are example of projective rational homogeneous varieties (RHV). They are described as quotients $G/P$ of a semisimple (complex, in our case) Lie group $G$ by a parabolic subgroup $P$, or equivalently as unique closed orbits (of the highest weight vectors) into projectivized representations of such groups. The Representation Theory behind these varieties allows to derive several geometric properties: in this respect, the geometry of RHVs has been largely studied in \cite{LM03, LM04}. In particular, a wide literature has been devoted to the secant variety of lines $\sigma_2(G/P)$ and the tangential variety $\tau(G/P)$ (ie. unions of all tangent lines) to a RHV $G/P$, starting with Zak's key work \cite{zak1993tangents} and continuing with \cite{Kaj99, landsbergweymantangential,landsbergweymansecant,  manivelmichaleksecants, russo2016}. \\
\indent Another example of RHVs is the {\em Spinor varieties}, described by the quotients $\Spin(V)/P(v_{\omega})$ of a spin group $\Spin(V)$ for the parabolic subgroup $P(v_{\omega})$ stabilizing the highest weight vector $v_{\omega}$ in a (half-)spin representation of fundamental weight $\omega$. The (half-)spin representations are defined by the fundamental weights corresponding to the right-hand-side extremal nodes of the Dynkin diagrams of type $B_N$ and $D_N$. 
\[
\dynkin[labels={\omega_1,,,\omega_{N-1},\omega_{N}},edge length=.75cm] B{oo.oo*} 
\ \ \ \ \ \ \ \ \ \ 
\dynkin[labels={\omega_1,,,,\omega_{N-1},\omega_{N}},edge length=.75cm] D{oo.oo**}
\]
Despite their bijection with maximal orthogonal Grassmannians, they are not well understood as well as the classical Grassmannians. However, due to their relation with Clifford algebras, they have a richer structure making them an interesting topic of research. Moreover, Spinor varieties appear in Quantum Information as sets of separable states in fermionic Fock spaces \cite{holweckfermionic2015, holweckfermionic2018}. In general, equations and singular loci of secant varieties of lines to Spinor varieties are still not determined: the only completely understood case is the one of the Spinor variety defined by the spin group $\Spin_{12}$, as one of the {\em Legendrian varieties} \cite{LM01, LM07}. \\
\hfill\break
\indent The aim of this paper is to describe the secant variety of lines to a Spinor variety. Due to the fact that Spinor varieties of type $B_{N}$ and $D_{N+1}$ are projectively equivalent, throughout all this paper we work with the Spinor variety $\mathbb S_{N}^+$ lying in the projectivization of the half-spin representation $V_{\omega_N}^{D_N}=\bigwedge^{ev}\mathbb C^N$, under the assumption that $N$ is even  (cf. Remark \ref{rmk:setting}). The main idea of the work is to use the natural action of $\Spin_{2N}$ on $V_{\omega_N}^{D_N}$, which leaves $\sigma_2(\mathbb S_{N}^+)$ invariant. In particular, the singular locus $\Sing(\sigma_2(\mathbb S_{N}^+))$ is stable under the action of $\Spin_{2N}$, and it is the union of some orbits. Then one can check the singularity or smoothness of an orbit by simply checking it for a representative. \\
\indent In Theorem \ref{thm:orbit partition of sec} we prove that the poset of $\Spin_{2N}$--orbits in $\sigma_2(\mathbb S_{N}^+)$ is 
\begin{center}
	{\small \begin{tikzpicture}[scale=3]
		
		\node(S) at (0,0.4){{$\mathbb S_N^+$}};
		\node(t2) at (0,0.7){{$\Theta_{2,N} = \Sigma_{2,N}$}};
		\node(t3) at (-0.4,1){{$\Theta_{3,N}$}};
		\node(t) at (-0.4,1.4){{$\vdots$}};
		\node(td) at (-0.4,1.8){$\Theta_{\frac{N}{2},N}$};
		
		\node(s3) at (0.4,1.2){{$\Sigma_{3,N}$}};
		\node(s) at (0.4,1.6){{$\vdots$}};
		\node(sd) at (0.4,2){{$\Sigma_{\frac{N}{2},N}$}};
		
		\path[font=\scriptsize,>= angle 90]
		(S) edge [->] node [left] {} (t2)
		(t2) edge [->] node [left] {} (t3)
		(t3) edge [->] node [left] {} (s3)
		(t3) edge [->] node [left] {} (t)
		(t) edge [->] node [left] {} (td)
		(t) edge [->] node [left] {} (s)
		(td) edge [->] node [left] {} (sd)
		(s3) edge [->] node [left] {} (s)
		(s) edge [->] node [left] {} (sd);
		\end{tikzpicture}}
\end{center}
where the orbits $\Theta_{l,N}$ (cf.\ \eqref{def:tangent orbits}) are made of tangent points to $\mathbb S_N^+$, the orbits $\Sigma_{l,N}$ (cf.\ \eqref{def:secant orbits}) are made of points lying on bisecant lines to $\mathbb S_N^+$, and the arrows denote the inclusion of an orbit into the closure of another orbit.\\
\indent Via the theory of the nonabelian apolarity from \cite{LO13}, we introduce the {\em Clifford apolarity} (cf.\ Sec. \ref{sec:clifford apolarity}) which allow us to completely solve the problems of identifiability and {\em tangential-identifiability} (cf.\ Def.\ \ref{def:cactus}) of points in $\sigma_2(\mathbb S_N^+)$. More precisely, we prove the following result, collecting Proposition \ref{prop:contact locus for Sigma2} and Theorems \ref{thm:identifiable secant orbits}, \ref{thm:tangential-identifiable tangent orbit} at once. 
\begin{main-teo}
	Points in the orbit $\Sigma_{2,N}$ are not identifiable, while points in the orbits $\Sigma_{l,N}$ for $l\geq 3$ are identifiable. Moreover, points in the orbits $\Theta_{l,N}$ for $l\geq 3$ are tangential-identifiable.
\end{main-teo}
\indent We apply these results for computing the dimensions of the orbit closures. Moreover, we describe the second Terracini locus of a Spinor variety, whose importance in geenral relies in its strong relation with the singular locus. 
\begin{teo*}[Theorem \ref{thm:terracini locus spinor}]
	The second Terracini locus $\Terr_2(\mathbb S_N^+)$ of the Spinor variety $\mathbb S_N^+$ corresponds to the orbit closure $\overline{\Sigma_{2,N}}=\mathbb S_N^+ \sqcup \Sigma_{2,N}$.
\end{teo*}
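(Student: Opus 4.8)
The plan is to exploit the $\Spin_{2N}$-equivariance of the abstract $2$-secant map $\mathbb S_N^+\times\mathbb S_N^+\times\mathbb P^1\dashrightarrow\sigma_2(\mathbb S_N^+)$. The second Terracini locus $\Terr_2(\mathbb S_N^+)$ — a subvariety of the variety of pairs $\{p,q\}$ of distinct points of $\mathbb S_N^+$ — is $\Spin_{2N}$-invariant, hence a union of orbits; and by Witt's theorem the $\Spin_{2N}$-orbits of such pairs are classified by the single invariant $\dim(E_p\cap E_q)$ of the associated maximal isotropic subspaces $E_p,E_q\subset\mathbb C^{2N}$, which (lying in the same family) satisfy $\dim(E_p\cap E_q)\equiv N\pmod 2$; write $\mathcal P_k$ for the orbit with $\dim(E_p\cap E_q)=N-2k$, $1\le k\le N/2$. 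A pair belongs to $\Terr_2(\mathbb S_N^+)$ exactly when $\dim\langle\hat T_p\mathbb S_N^+,\hat T_q\mathbb S_N^+\rangle$ drops below its generic value; since that generic value turns out to be the maximal one $2\dim\mathbb S_N^++2$ — equivalently, $\sigma_2(\mathbb S_N^+)$ is non-defective, which will fall out of the computation below — this means the sum $\hat T_p\mathbb S_N^++\hat T_q\mathbb S_N^+$ is not direct, i.e.\ $\hat T_p\mathbb S_N^+\cap\hat T_q\mathbb S_N^+\ne 0$. So everything reduces to computing this intersection on each $\mathcal P_k$.

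I would carry out the computation in spinor coordinates. Fixing $p=[u]$ and a maximal isotropic $E'\subset\mathbb C^{2N}$ complementary to $E_u$, one identifies the half-spin representation with $\bigwedge^{ev}E'$ so that $u\leftrightarrow 1$, and by the tangent-space description afforded by Clifford apolarity (Section~\ref{sec:clifford apolarity}) one has $\hat T_{[u]}\mathbb S_N^+=\bigwedge^0E'\oplus\bigwedge^2E'$. For $\{[u],[v]\}\in\mathcal P_k$ one normalises $v=\exp(B)\,u$ with $B\in\bigwedge^2E'$ of rank $2k$ (the alternating form realising $E_v$ as a graph over $E_u$); since $B\in\bigwedge^2E'\subset\mathfrak{so}_{2N}$ acts on $\bigwedge^{ev}E'$ by $\omega\mapsto B\wedge\omega$, the element $\exp(B)\in\Spin_{2N}$ acts by $\omega\mapsto e^{B}\wedge\omega$, so $\hat T_{[v]}\mathbb S_N^+=e^{B}\wedge\hat T_{[u]}\mathbb S_N^+$ and the intersection is $\{\,e^{B}\wedge(c+\eta):c\in\mathbb C,\ \eta\in\bigwedge^2E',\ e^{B}\wedge(c+\eta)\in\bigwedge^0E'\oplus\bigwedge^2E'\,\}$. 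Expanding $e^{B}=1+B+\tfrac12B^2+\cdots$ and matching graded pieces: the degree-$4$ equation reads, after replacing $\eta$ by $\eta+\tfrac c2B$, simply $B\wedge\eta=0$, and then the degree-$6$ equation collapses to $c\,B^3=0$ while all higher-degree equations are then automatic. Hence if $k\ge 3$, then $B^3\ne 0$ forces $c=0$, and $\{\eta\in\bigwedge^2E':B\wedge\eta=0\}=0$: writing $E'=U\oplus W$ with $U=\operatorname{supp}B$ (so $\dim U=2k$), the $\bigwedge^2W$- and $U\otimes W$-components of $\eta$ vanish (disjoint supports, resp.\ $B\wedge\cdot$ injective on $\bigwedge^1U$ since $2k\ge 4$) and the $\bigwedge^2U$-component vanishes because $B\wedge\cdot$ is injective on $\bigwedge^2U$ when $2k\ge 6$ (the $\mathfrak{sl}_2$-/hard-Lefschetz action of the symplectic form $B$ on $\bigwedge^{\bullet}U$); so the intersection is $0$. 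For $k=1$ one has $v=1+B\in\bigwedge^0\oplus\bigwedge^2$, so $p,q\in\hat T_{[p]}\mathbb S_N^+\cap\hat T_{[q]}\mathbb S_N^+$ and the intersection is nonzero; for $k=2$ the scalar $c$ stays free and there survives the $5$-dimensional space of primitive $2$-forms on the rank-$4$ block, so again nonzero. Therefore $\mathcal P_k\subseteq\Terr_2(\mathbb S_N^+)$ if and only if $k\in\{1,2\}$ (and the case $k=N/2$ re-establishes non-defectivity).

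It remains to transfer this to $\sigma_2(\mathbb S_N^+)$ via linear spans. A pair in $\mathcal P_1$ has its span $\langle p,q\rangle$ a line lying on $\mathbb S_N^+$ (as $[v]\in T_{[p]}\mathbb S_N^+$), contributing only rank-one points; a pair in $\mathcal P_2$ spans a genuine bisecant line meeting $\mathbb S_N^+$ in exactly $\{p,q\}$, whose remaining points form — by the definition of the secant orbits~\eqref{def:secant orbits} together with the orbit poset of Theorem~\ref{thm:orbit partition of sec} — precisely $\Sigma_{2,N}$. Since every point of $\mathbb S_N^+$ is an endpoint of such bisecant lines, the union of the linear spans of the pairs of $\Terr_2(\mathbb S_N^+)$ is $\mathbb S_N^+\cup\Sigma_{2,N}$, which by the poset is the orbit closure $\overline{\Sigma_{2,N}}=\mathbb S_N^+\sqcup\Sigma_{2,N}$. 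This is the asserted correspondence.

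The main obstacle is the computation of the second paragraph — concretely, tracking the graded components of $e^{B}\wedge(c+\eta)$ down to the degree-$6$ identity that forces $c=0$ exactly when $\operatorname{rank}B\ge 6$. This is the precise point at which $\mathcal P_2$ (in $\Terr_2$) parts ways from $\mathcal P_3$ (not in $\Terr_2$), and it runs parallel to the dichotomy between the non-identifiable orbit $\Sigma_{2,N}$ and the identifiable orbits $\Sigma_{l,N}$, $l\ge 3$, of the Main Theorem — a parallel one can use as a consistency check on the conclusion $\Terr_2(\mathbb S_N^+)=\mathcal P_1\sqcup\mathcal P_2$.
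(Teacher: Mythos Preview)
Your proposal is correct and takes a genuinely different route from the paper's proof. The paper argues the converse direction (``if $\hat T_{[a]}\mathbb S_N^+\cap\hat T_{[b]}\mathbb S_N^+\neq 0$ then $d([a],[b])\leq 2$'') by invoking the tangential-identifiability result (Theorem~\ref{thm:tangential-identifiable tangent orbit}): a common tangent point $[x]$ cannot be tangential-identifiable, hence lies in $\Theta_{2,N}\cup\mathbb S_N^+$; writing $[x]=[y+z]$ with $[y],[z]$ rank--$2$ in $T_{[a]}\mathbb S_N^+$ gives $d([a],[y])=d([a],[z])=1$, and likewise for $[b]$, whence $d([a],[b])\leq 2$. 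Your argument instead computes $\hat T_{[u]}\cap\hat T_{[v]}$ directly via the exponential parametrisation $v=e^B\cdot u$ and the hard Lefschetz property of wedging with the symplectic form $B$ on its support, obtaining the sharp dichotomy $cB^3=0$ and $B\wedge\eta'=0$. This is self-contained (it does not presuppose Theorem~\ref{thm:tangential-identifiable tangent orbit}, which was itself proved by a Clifford-apolarity computation) and in fact yields finer information: the exact dimension of the intersection on each orbit ($6$-dimensional for $k=2$), and an independent re-proof of non-defectivity from the $k=N/2$ case. The paper's approach, on the other hand, is shorter once the orbit poset and tangential-identifiability are in hand, and makes the logical dependence on those structural results transparent. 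One small remark: you should state the hypothesis $N\geq 7$ (as the paper does), since for $N\leq 6$ the secant variety fills the ambient space and the Terracini condition degenerates.
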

\indent Finally, we obtain a lower and upper bound on the singular locus of $\sigma_2(\mathbb S_N^+)$.

\begin{teo*}[Corollary \ref{cor:sing locus spinor}]
	For any $N\geq 7$, the singular locus of the secant variety of lines $\sigma_2(\mathbb S_{N}^+)$ contains the orbit closure $\overline{\Sigma_{2,N}}$ and lies in the tangential variety $\tau(\mathbb S_N^+)$.
\end{teo*}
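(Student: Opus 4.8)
The plan is to use that $\Sing(\sigma_2(\mathbb S_N^+))$ is a Zariski-closed, $\Spin_{2N}$-invariant subset of $\sigma_2(\mathbb S_N^+)$, hence by Theorem~\ref{thm:orbit partition of sec} a union of orbit closures; it thus suffices to decide, for each of the finitely many orbits, whether one of its points (equivalently all of them, by homogeneity) is a smooth or a singular point of $\sigma_2(\mathbb S_N^+)$. Since $\mathbb S_N^+$ is homogeneous, $\sigma_2(\mathbb S_N^+)$ is irreducible and non-degenerate, and for $N\geq 7$ it is a proper, non-linear subvariety of $\mathbb P(V_{\omega_N}^{D_N})$ (for even $N\leq 6$ the secant variety fills the ambient space), so the classical inclusion $\sigma_1(X)\subseteq\Sing(\sigma_2(X))$ already gives $\mathbb S_N^+\subseteq\Sing(\sigma_2(\mathbb S_N^+))$; moreover the dimension count in Theorem~\ref{thm:orbit partition of sec} shows $\sigma_2(\mathbb S_N^+)$ is non-defective, of dimension $2\dim\mathbb S_N^+ + 1$. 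So there are two statements to prove: the \emph{upper bound} $\Sing(\sigma_2(\mathbb S_N^+))\subseteq\tau(\mathbb S_N^+)$, and the \emph{lower bound} $\Sigma_{2,N}\subseteq\Sing(\sigma_2(\mathbb S_N^+))$ — which, combined with $\mathbb S_N^+\subseteq\Sing$, yields $\overline{\Sigma_{2,N}}=\mathbb S_N^+\sqcup\Sigma_{2,N}\subseteq\Sing(\sigma_2(\mathbb S_N^+))$.

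For the upper bound I would show that every point $f$ of the open set $U:=\sigma_2(\mathbb S_N^+)\setminus\tau(\mathbb S_N^+)$ is a smooth point. Because $\mathbb S_N^+$ and all the tangent orbits $\Theta_{l,N}$ are contained in $\tau(\mathbb S_N^+)$, the orbit stratification of Theorem~\ref{thm:orbit partition of sec} forces $U\subseteq\bigsqcup_{l\geq 3}\Sigma_{l,N}$; hence, by the Main Theorem, every $f\in U$ is identifiable, say $f=p+q$ with $p\neq q$, and since $f\notin\tau(\mathbb S_N^+)$ this is also its unique length-$2$ generalized decomposition. Consider the incidence variety $\mathcal I$, the closure of $\{(\{p,q\},[h]) : p\neq q\in\mathbb S_N^+,\ [h]\in\langle p,q\rangle\}$ inside $\mathrm{Hilb}^2(\mathbb S_N^+)\times\mathbb P(V_{\omega_N}^{D_N})$, with its projection $\pi$ onto $\sigma_2(\mathbb S_N^+)$. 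Over the locus of reduced pairs of distinct points $\mathcal I$ is a $\mathbb P^1$-bundle over the smooth Hilbert scheme, hence smooth of dimension $2\dim\mathbb S_N^+ + 1$, and $\pi^{-1}(f)=\{(\{p,q\},[f])\}$ is a single point lying there. The differential of $\pi$ at that point, viewed inside the tangent space of the ambient projective space, has image the span $\langle T_p\mathbb S_N^+, T_q\mathbb S_N^+\rangle$, of dimension $2\dim\mathbb S_N^+ + 1$ because $f=p+q\notin\overline{\Sigma_{2,N}}=\Terr_2(\mathbb S_N^+)$ by Theorem~\ref{thm:terracini locus spinor}, i.e.\ $\{p,q\}$ is not in the second Terracini locus. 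So $\pi$ is unramified at that point; being in addition proper with the single reduced fibre $\pi^{-1}(f)$ over $f$ and birational onto $\sigma_2(\mathbb S_N^+)$ (generic identifiability, Main Theorem), it induces an isomorphism of local rings $\mathcal O_{\sigma_2(\mathbb S_N^+),f}\cong\mathcal O_{\mathcal I,(\{p,q\},[f])}$, and the latter is regular, so $f$ is a smooth point. As $f\in U$ was arbitrary, $\Sing(\sigma_2(\mathbb S_N^+))\subseteq\sigma_2(\mathbb S_N^+)\setminus U=\tau(\mathbb S_N^+)$.

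For the lower bound I would fix an explicit representative $g\in\Sigma_{2,N}$ coming from the Clifford apolarity of Section~\ref{sec:clifford apolarity} and bound $T_g\sigma_2(\mathbb S_N^+)$ from below. By the one-sided form of Terracini's lemma (valid at every point), for every decomposition $g=p+q$ with $p\neq q\in\mathbb S_N^+$ one has $T_g\sigma_2(\mathbb S_N^+)\supseteq\langle T_p\mathbb S_N^+, T_q\mathbb S_N^+\rangle$; since $g$ is \emph{not} identifiable and, by Proposition~\ref{prop:contact locus for Sigma2}, its set of decompositions is a positive-dimensional family, it follows that
\[
T_g\sigma_2(\mathbb S_N^+)\ \supseteq\ \big\langle\, T_p\mathbb S_N^+,\ T_q\mathbb S_N^+\ :\ p+q=g,\ p,q\in\mathbb S_N^+,\ p\neq q\,\big\rangle .
\]
The remaining step — and the one I expect to be the real obstacle — is to compute this span for the chosen $g$ and to check that its (projective) dimension is strictly larger than $2\dim\mathbb S_N^+ + 1=\dim\sigma_2(\mathbb S_N^+)$: each single summand $\langle T_p\mathbb S_N^+, T_q\mathbb S_N^+\rangle$ is deficient because $g\in\Terr_2(\mathbb S_N^+)$, but as $(p,q)$ runs over the contact locus these spaces should sweep out strictly more. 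Carrying this out requires a sufficiently explicit hold on the tangent spaces $T_p\mathbb S_N^+$ (through Clifford multiplication) and on the contact locus of $g$ from Proposition~\ref{prop:contact locus for Sigma2}, together with a rank computation uniform in $N\geq 7$; the subtlety is that positive-dimensionality of the family alone does not force the span to jump (it does not, for instance, on $\sigma_2$ of a rational normal curve in $\mathbb P^3$), so the precise geometry of that family is what matters. Alternatively, one may invoke the general principle that, for a non-defective $\sigma_2(X)$, a point with positive-dimensional entry locus is singular, which reduces the lower bound exactly to the positive-dimensionality statement of Proposition~\ref{prop:contact locus for Sigma2}. Combining the two bounds with $\mathbb S_N^+\subseteq\Sing(\sigma_2(\mathbb S_N^+))$ yields Corollary~\ref{cor:sing locus spinor}.
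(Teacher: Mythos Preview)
Your argument for the upper bound $\Sing(\sigma_2(\mathbb S_N^+))\subseteq\tau(\mathbb S_N^+)$ essentially coincides with the paper's Lemma~\ref{lemma:smoothness secant orbit spinor}: identifiability over $\sigma_2(\mathbb S_N^+)\setminus\tau(\mathbb S_N^+)$ together with the Terracini locus computation (Theorem~\ref{thm:terracini locus spinor}) makes the projection from the abstract secant variety a bijection with injective differential, hence an isomorphism onto a smooth open set. The paper is terser but uses the same mechanism.

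For the lower bound you have a genuine gap. Your primary approach --- spanning tangent spaces over all decompositions of a point $g\in\Sigma_{2,N}$ --- you yourself flag as incomplete, and rightly so: as your rational-normal-curve remark shows, a positive-dimensional family of decompositions need not enlarge the span beyond $2\dim X+1$. Your fallback ``general principle'' (that in a non-defective $\sigma_2(X)$ a point with positive-dimensional entry locus is singular) is precisely the content that requires proof here, and the paper supplies one via a \emph{purity} argument that you do not mention. The paper (Lemma~\ref{lemma:singularity of Sigma_2}) argues by contradiction: if $\Sigma_{2,N}$ were smooth, then by the orbit poset all of $\sigma_2(\mathbb S_N^+)\setminus\mathbb S_N^+$ would be smooth, and the projection $\pi$ from the abstract secant variety, restricted away from $\pi^{-1}(\mathbb S_N^+)$, would be a morphism between \emph{smooth} varieties of the same dimension. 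The locus where $d\pi$ drops rank is then a determinantal divisor (or empty); but this locus coincides with $\pi^{-1}(\overline{\Sigma_{2,N}}\setminus\mathbb S_N^+)$ --- identifiability and tangential-identifiability for $l\geq 3$ force $\pi$ to be a local isomorphism elsewhere, while the $6$-dimensional fibres over $\Sigma_{2,N}$ (Proposition~\ref{prop:contact locus for Sigma2}) force the rank to drop there --- and a direct dimension count using Proposition~\ref{prop:orbit dimension in sec2} gives codimension $\tfrac{N(N-1)}{2}-4N+10>1$ for $N\geq 7$. That is the contradiction, and this divisoriality step is the missing idea in your proposal.
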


\noindent We also conjecture that the singular locus actually coincides with the above orbit closure.\\
\hfill\break
\indent We point out that the same poset of orbits and the same results on identifiability have been obtained in \cite{galganostaffolani2022grass} for the case of Grassmannians, simultaneously to this work, although with different techniques. This suggests a more general behaviour for a wider subclass of RHVs, namely the cominuscule varieties. However, we leave this generalization for future work.\\
\hfill\break
\indent The paper is organized as follows. In Sec.\ \ref{sec:preliminaries} we recall some basic facts and we fix our notation. Sec.\ \ref{sec:clifford apolarity} is devoted to the Clifford apolarity. In Sec.\ \ref{sec:poset spinor} we determine the poset of $\Spin_{2N}$--orbits in $\sigma_2(\mathbb S_{N}^+)$. In Secc.\ \ref{sec:identifiability spinor} and \ref{sec:tangential-identifiability} we apply the Cliffor apolarity for determining identifiable and tangential-identifiable points in $\sigma_2(\mathbb S_{N}^+)$: these results allow us to compute the dimensions of the orbit closures in Sec.\ \ref{sec:orbit dimensions spinor}, as well as to describe the second Terracini locus $\Terr_2(\mathbb S_N^+)$ in Sec.\ \ref{sec:terracini locus spinor}. Finally, in Sec.\ \ref{sec:sing locus spinor} we give bounds on the singular locus $\Sing(\sigma_2(\mathbb S_N^+))$ and we conjecture what it actually we expect it to be. The appendix contains arguments suggesting that our conjecture should be true.

\section*{Notation}

For any integers $a\leq b \leq k \in \mathbb Z_{>0}$ we fix the notation 
\[ [k]:=\{1,\ldots, k\} \ \ \ \ , \ \ \ \ \binom{[k]}{a}:=\left\{ I \subset [k] \ | \ |I|=a\right\} \ , \]
and $i=a:b$ meaning $i\in \{a,a+1,\ldots, b\}$. Given $(e_1,\ldots, e_N)$ the standard basis of $\mathbb C^N$, we denote by $\bold{e}_I:=e_{i_1}\wedge \ldots \wedge e_{i_k} \in \bigwedge^k\mathbb C^N$ the exterior product of the basis vectors indexed by the subset $I \in \binom{[N]}{k}$.

\section{Preliminaries}\label{sec:preliminaries}

\indent \indent We refer to \cite[Ch.\ 5, Secc.\ 4-5]{procesi} for details on Clifford algebras and spin groups, and \cite[Ch.\ 11, Sec.\ 7]{procesi} for spin representations. Since Spinor varieties of Dynkin type $B_N$ and $D_{N+1}$ are projectively equivalent, they share the same geometry inside their minimal homogeneous embedddings. In this respect, we only work with spin groups of Dynkin type $D$ defined starting from even-dimensional vector spaces.

\paragraph{Clifford algebras and spin groups.} Given a finite-dimensional vector space $V$ over $\mathbb C$ endowed with a non-degenerate quadratic form $q\in \Sym^2V^\vee$, we denote by $\Cl_q(V)$ the {\em Clifford algebra} of $q$ over $V$ and by $\Cl_q^+(V)$ its even-degree component. One can always reduce to study Clifford algebras over even-dimensional vector spaces, thus we consider $\dim V=2N$.\\
\indent We fix the hyperbolic standard basis $(e_1,\ldots , e_N,f_1,\ldots , f_N)$ of $V$ with respect to the quadratic form $q=\sum_{k=1}^N x_kx_{N+k}$, so that the orthogonal decomposition $V=E\oplus E^\vee$ holds for $E=\langle e_1 , \ldots , e_N\rangle_\mathbb C$ and $E^\vee=\langle f_1,\ldots , f_N\rangle_\mathbb C$ fully isotropic subspaces of maximal dimension $\lfloor \frac{2N}{2} \rfloor=N$. Since $\bigwedge^\bullet V \simeq \Cl_q(V)$ (as vector spaces) \cite[Sec.\ 1.1, Proposition 1.3]{lawson}, the {\em Clifford multiplication} in $\Cl_q(V)$ can be described as follows: for any $v=e+f \in E\oplus E^\vee$ and $x \in \Cl_q(V)\simeq \bigwedge^\bullet V$ it holds 
\begin{equation}\label{multiplication}
	v\cdot x \simeq e \wedge x + f\neg x \ ,
	\end{equation}
where, for $x=x_1\wedge \ldots \wedge x_k\in \bigwedge^kV$,  $f \neg x:= \sum_{i=1}^k(-1)^{i+1}q(f,x_i)x_1\wedge \ldots \wedge \widehat{x_i}\wedge \ldots \wedge x_k \in \bigwedge^{k-1}V$. \\
\indent For any invertible element $x=v_1\cdots v_k \in \Cl_q(V)^\times$, its {\em spinor norm} and its {\em reverse involution} are respectively 
\begin{equation}\label{reverse in spin}
N(x):=xx^*=q(v_1)\cdots q(v_k) \in \mathbb C^\times \ \ \ , \ \ \ x^*:= v_k\cdots v_1 \ .
\end{equation}
Then the {\em spin group} is $\Spin(V,q):= \left\{x \in \Cl_q^+(V) ^\times \ | \ xVx^{-1}=V , \ N(x)=1\right\}$. For any $x\in \Spin(V,q)$ it holds $\big(\conj_x: v \mapsto xvx^{-1}\big)\in \SO(V,q)$. We write $\Spin_{2N}$ for the spin group $\Spin(\mathbb C^{2N},q)$ of Dynkin type $D_{N}$.

\paragraph{Spin representations.} In the same notation as above, for $V=E\oplus E^\vee$ of dimension $\dim V=2N$, the {\em half-spin representations} $V_{\omega_{N-1}}^{D_N}$ and $V_{\omega_{N}}^{D_N}$ for $\Spin_{2N}$ are defined by the two fundamental weights $\omega_{N-1}$ and $\omega_N$ on the right-hand-side extremal nodes of the Dynkin diagram of type $D_{N}$: 
\[
\dynkin[labels={,,,,\omega_{N-1},\omega_{N}},edge length=.75cm] D{oo.oo**}
\]
They coincide with the $2^{N-1}$--dimensional vector spaces $\bigwedge^{{ev}}E$ and $\bigwedge^{{od}}E$ depending on the parity of $N$. More precisely, given $v_{\omega_i}$ and $\ell_{\omega_i}$ the highest and lowest weight vectors of the irreducible representation $V_{\omega_i}^{D_N}$ respectively, it holds:
\begin{table}[H]
	\centering
	\begin{tabular}{ |P{3cm}||P{3cm}|P{4cm}|P{2cm}|  }
		\hline
		\multicolumn{4}{|c|}{$N\equiv 0$ (mod $2$)} \\
		\hline
		\hline
		& $V_{\omega_i}^{D_N}$ & $v_{\omega_i}$ & $\ell_{\omega_i}$ \\
		\hline
		\hline
		$\omega_{N-1}$ & $\bigwedge^{od}E$ & $\bold{e}_{[N-1]}=e_1\wedge \ldots \wedge e_{N-1}$ & $e_1$ \\
		$\omega_N$ & $\bigwedge^{ev}E$ & $\bold{e}_{[N]}=e_1\wedge \ldots \wedge e_N$ & $\mathbbm{1}$ \\
		\hline
	\end{tabular}
	\caption{Half-spin representations of $\Spin_{2N}$ for $N\equiv 0$ (mod $2$).}
	\label{table:half-spin N even}
\end{table}
\vspace{-4mm}
\begin{table}[H]
	\centering
	\begin{tabular}{ |P{3cm}||P{3cm}|P{4cm}|P{2cm}|  }
		\hline
		\multicolumn{4}{|c|}{$N\equiv 1$ (mod $2$)} \\
		\hline
		\hline
		& $V_{\omega_i}^{D_N}$ & $v_{\omega_i}$ & $\ell_{\omega_i}$ \\
		\hline
		\hline
		$\omega_{N-1}$ & $\bigwedge^{ev}E$ & $\bold{e}_{[N-1]}=e_1\wedge \ldots \wedge e_{N-1}$ & $\mathbbm{1}$ \\
		$\omega_N$ & $\bigwedge^{od}E$ & $\bold{e}_{[N]}=e_1\wedge \ldots \wedge e_N$ & $e_1$ \\
		\hline
	\end{tabular}
	\caption{Half-spin representations of $\Spin_{2N}$ for $N\equiv 1$ (mod $2$).}
	\label{table:half-spin N odd}
\end{table}

\noindent where we write the scalar $\mathbbm{1}:=1\in \mathbb C$ when considered as weight vector (or simply as element of the representation). The half-spin representations $V_{\omega_{N-1}}^{D_N}$ and $V_{\omega_N}^{D_N}$ are self-dual if $N$ is even, and dual one to the other if $N$ is odd.

\paragraph{Spinor varieties.} The {\em Spinor varieties} $\mathbb S_N^+\subset \mathbb P(\bigwedge^{ev}E)$ and $\mathbb S_N^-\subset \mathbb P(\bigwedge^{od}E)$ are the rational homogeneous varieties $\Spin_{2N}\cdot [v_{\omega_i}]=\Spin_{2N}/P_{\omega_i}\subset\mathbb P(V_{\omega_i}^{D_N})$ for $P_{\omega_i}$ being the parabolic subgroup corresponding to the fundamental weight $\omega_i$, or equivalently the stabilizer of $v_{\omega_i}$. They have dimension $\binom{N}{2}$ and they are locally parametrized by the set of skew-symmetric matrices $\bigwedge^2\mathbb C^{N}$. Points in $V_{\omega_i}^{D_N}$ are said {\em spinors}, while the ones in $\mathbb S_N^\pm$ are said {\em pure spinors}. 

\begin{obs*}
	For $V=\mathbb Cu\oplus E\oplus E^\vee$ of odd dimension $2N+1$, the $B_N$--type spin group $\Spin_{2N+1}$ has only one {\em spin-representation} $V_{\omega_N}^{B_N}=\bigwedge^\bullet Eu$, hence only one Spinor variety $\mathbb S_{N}\subset \mathbb P(\bigwedge^\bullet Eu)$. 
	\end{obs*}

\hfill\break
\framebox[15cm]{
	\begin{minipage}{14.5cm}
		\centering
		\vspace{2mm}
		\begin{obs}[\textsc{Setting}]\label{rmk:setting} 
			The Spinor variety for $\Spin(2N+1)$ and the Spinor varieties for $\Spin(2N+2)$ are projectively equivalent. Then from now on we work only with $V=E\oplus E^\vee$ of dimension $\dim V =2N$ for an even $N$, hence with the Spinor variety $\mathbb S_{N}^+=\Spin_{2N}\cdot [\bold{e}_{[N]}] \subset \mathbb P\left( \bigwedge^{ev}E\right)$.
			All the arguments of this work can be repeated for $\dim V \equiv 2$ (mod $4$) with the Spinor variety $\mathbb S_{N}^-=\Spin_{2N}\cdot [\bold{e}_{[N]}] \subset \mathbb P(\bigwedge^{od}E)$.
		\end{obs}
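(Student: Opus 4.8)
We must show that the spinor variety $\mathbb S_N$ of $\Spin_{2N+1}$ (Dynkin type $B_N$) and the two spinor varieties $\mathbb S_{N+1}^{\pm}$ of $\Spin_{2N+2}$ (type $D_{N+1}$) are projectively equivalent; note that all three sit inside $\mathbb P^{2^N-1}$. The plan is to: (i) write down the classical embedding $\iota\colon\Spin_{2N+1}\hookrightarrow\Spin_{2N+2}$ coming from a non-isotropic line; (ii) check that, along $\iota$, a half-spin representation of $\Spin_{2N+2}$ restricts to the spin representation of $\Spin_{2N+1}$ with the \emph{same} highest-weight line; (iii) use a dimension count to turn the resulting containment of highest-weight orbits into an equality, and finish with the diagram automorphism of $D_{N+1}$ to pass between $\mathbb S_{N+1}^+$ and $\mathbb S_{N+1}^-$.

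For (i) I would fix a $(2N+2)$-dimensional quadratic space $(W,q)$ and a vector $w$ with $q(w)\neq 0$; then $W':=w^{\perp}$ is non-degenerate of dimension $2N+1$, and extending by the identity on $\mathbb C w$ realises $\SO(W')$ as the stabiliser of $w$ in $\SO(W)$, which lifts to $\iota\colon\Spin(W')\hookrightarrow\Spin(W)=\Spin_{2N+2}$. Choosing the hyperbolic bases so that the first $N$ hyperbolic pairs of $W$ lie in $W'$, the induced inclusion of Cartan subalgebras is the hyperplane $\{\varepsilon_{N+1}=0\}$, so weights restrict by $\varepsilon_i\mapsto\varepsilon_i$ for $i\le N$ and $\varepsilon_{N+1}\mapsto 0$.

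For (ii), the $2^N$ weights of $V_{\omega_{N+1}}^{D_{N+1}}$ are the $\tfrac12(\pm\varepsilon_1\pm\cdots\pm\varepsilon_{N+1})$ with an even number of minus signs, and along $\iota$ these restrict bijectively onto the $2^N$ weights $\tfrac12(\pm\varepsilon_1\pm\cdots\pm\varepsilon_N)$ — exactly the weights of $V_{\omega_N}^{B_N}$, each of multiplicity one. Since the spin representation is minuscule and its $B_N$-dominant weight $\tfrac12(\varepsilon_1+\cdots+\varepsilon_N)=\omega_N^{B_N}$ occurs among these, $V_{\omega_{N+1}}^{D_{N+1}}$ restricted along $\iota$ contains $V_{\omega_N}^{B_N}$; matching dimensions ($2^N$ on both sides) then forces the restriction to \emph{equal} $V_{\omega_N}^{B_N}$ as a $\Spin_{2N+1}$-module, and in particular the two highest-weight lines coincide inside the common $\mathbb P^{2^N-1}$ — call it $[v]$.

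For (iii) we would then have
\[
\mathbb S_N=\Spin_{2N+1}\cdot[v] \ \subseteq\ \Spin_{2N+2}\cdot[v]=\mathbb S_{N+1}^{+}
\]
inside $\mathbb P^{2^N-1}$, where the first orbit is taken along $\iota$ and $\mathbb S_{N+1}^{+}$ denotes the $\omega_{N+1}$ half-spin model. Both sides are irreducible projective varieties of dimension $\binom{N+1}{2}$ — the standard count $|\Phi^+|-|\Phi_L^+|$ for the maximal-isotropic parabolic, which yields the same value in types $B_N$ and $D_{N+1}$ — so the containment must be an equality. Finally, the outer automorphism of $\Spin_{2N+2}$, realised by conjugation by a reflection in $\mathrm O_{2N+2}\setminus\SO_{2N+2}$, interchanges the two half-spin representations and hence induces a linear isomorphism of $\mathbb P^{2^N-1}$ taking $\mathbb S_{N+1}^{+}$ onto $\mathbb S_{N+1}^{-}$; so all three varieties are projectively equivalent. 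The closest thing to a genuine obstacle will be confirming that the restricted module in (ii) is the irreducible spin module rather than a non-split sum of smaller pieces — but this is automatic from minusculeness, so there is essentially no hard step. Should one prefer to avoid representation theory altogether, the work instead goes into the classical algebra isomorphism $\Cl_q^+(W)\cong\Cl_{q'}(W')$: one realises the $B_N$-spin module as a half-spin $D_{N+1}$-module and checks $\Spin_{2N+1}$-equivariance by hand from the Clifford multiplication \eqref{multiplication}, which is longer but routine — and in any case the statement is classical.
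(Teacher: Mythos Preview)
Your argument is correct and is one of the standard proofs of this classical fact. However, the paper does not actually prove this statement: Remark~\ref{rmk:setting} is a \emph{setting} remark which simply invokes the projective equivalence of the $B_N$ and $D_{N+1}$ Spinor varieties as a well-known fact (already alluded to in the abstract and the introduction) in order to justify restricting attention to the $D_N$ case with $N$ even. There is therefore nothing in the paper to compare your proof against; you have supplied a proof where the paper supplies none.

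For what it is worth, your route via the branching $V_{\omega_{N+1}}^{D_{N+1}}\!\downarrow_{\Spin_{2N+1}}\cong V_{\omega_N}^{B_N}$ together with the dimension count $\dim\mathbb S_N=\dim\mathbb S_{N+1}^{\pm}=\binom{N+1}{2}$ is the cleanest way to see this, and the alternative you mention (using the isomorphism $\Cl_q^+(W)\cong\Cl_{q'}(W')$ of even Clifford algebras) is closer in spirit to the paper's general approach of working explicitly with Clifford multiplication. Either is fine; the statement is classical and the paper treats it as such.
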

	\vspace{0.5mm}
\end{minipage}}

\paragraph{Maximal orthogonal Grassmannians.} For any $a \in \bigwedge^\bullet E$, the Clifford multiplication \eqref{multiplication} defines a map 
\begin{equation}\label{psi_a}
	\begin{matrix}
		\psi_a: & V & \longrightarrow & \bigwedge^\bullet E\\
		& v & \mapsto & v\cdot a
		\end{matrix}
		\end{equation}
whose kernel $H_a:=\Ker(\psi_a)$ is always fully isotropic (i.e. $H_a\subset H_a^\perp$). Moreover, $H_a$ is of maximum dimension $\dim H_a=N$ (i.e. $H_a=H_a^\perp$) if and only if $[a]\in \mathbb S_{N}^+$. Notice that, in the notation from the previous paragraph, one has $H_{\bold{e}_{[N]}}=E$ and $H_{\mathbbm{1}}=E^\vee$.\\
\indent The set $\OG(N, V)$ of maximal fully isotropic subspaces of $V$ has two connected components, called {\em maximal orthogonal Grassmannians}:
\begin{align*}
\OG^+(N,V) &:= \left\{H \in \OG(N,V) \ | \ \codim_E(H\cap E)\equiv 0 \ (\text{mod} \ 2)\right\} \ , \\
\OG^-(N,V) &:=  \left\{H \in \OG(N,V) \ | \ \codim_E(H\cap E)\equiv 1 \ (\text{mod} \ 2)\right\} \ .
\end{align*}
Every $H \in \OG(N, V)$ defines a $1$--dimensional subspace $\left\{ \varphi \in \bigwedge^\bullet E \ | \ w \cdot \varphi=0 \ \forall w \in H\right\}=\langle a_H \rangle_\mathbb C \subset \bigwedge^\bullet E$, and $a_H$ is a pure spinor in either one of the two half-spin representations, depending on the parity of $N$. In particular, one has the one-to-one correspondence
	\begin{equation}\label{correspondence pure spinors and MFI subspaces}
	\begin{matrix}
	\mathbb S_{N}^+ \ \cup \  \mathbb S_{N}^- & \stackrel{1:1}{\longleftrightarrow} & \OG^+(N,V) \ \cup \ \OG^-(N,V)\\
	[a]  & \longrightarrow & H_a=\Ker(\psi_a) \\
	[a_H ] & \longleftarrow & H 
	\end{matrix} \ . \end{equation}	
\noindent Notice that every {\em non pure} spinor $a\in (\bigwedge^\bullet E)\setminus (\mathbb S_N^+\sqcup \mathbb S_N^-)$ defines a {\em non maximal} fully isotropic subspace $H_a=\Ker(\psi_a)$, but the latter does not define a unique spinor (cf. \cite[Sec.\ 3, Theorem 1]{batista2014pure}). We exhibit the association $H\mapsto [a_H]$. Given $H\in \OG^+(N,V)$ with $\dim(H \cap E)=p$ even, up to action of $\SO(V)$ one can assume $H=\langle e_1,\ldots , e_{p},g_{p+1},\ldots ,g_N\rangle_{\mathbb C}$ where 
\begin{equation}\label{generators for H_a}
	g_j= f_j+\sum_{k=p+1}^{N}\alpha_{kj}e_k \ \ \  ,\forall j=p+1:N \end{equation}
for a suitable skew-symmetric matrix $(\alpha_{kj})\in \bigwedge^2\mathbb C^{N-p}$. Then via \eqref{correspondence pure spinors and MFI subspaces} $H$ corresponds to a pure spinor $[a_H]\in \mathbb S_{N}^+$ such that $h\cdot a_H=0$ for any $h\in H$. The following result follows by a direct computation of the Clifford actions $g_j\cdot a_H=0$ for any $j=p+1:N$.

\begin{prop}\label{prop:from MFI to pure spinor}
	Let $N, p$ be even. Let $H=\langle e_1,\ldots , e_{p},g_{p+1},\ldots ,g_N\rangle_\mathbb C\in\OG^+(N,V)$, where the $g_j$'s are as in \eqref{generators for H_a} and define the skew-symmetric matrix $A=(\alpha_{kj})_{k,j}\in \bigwedge^{2}\mathbb C^{N-p}$. Then $H$ corresponds to the pure spinor 
	\[a_H=\sum_{I\subset\{p+1,\ldots, N\}}\Pf(A_I) \bold{e}_{[p]} \wedge \bold{e}_I \ , \] 
	where $\Pf(A_I)$ is the pfaffian of the submatrix $A_I$ of $A$ whose rows and columns are indexed by $I$, and we set $\bold{e}_\emptyset:=\mathbbm{1}$.
\end{prop}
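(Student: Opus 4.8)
The plan is to verify directly that the spinor $a_H$ produced by the formula is annihilated by each generator $g_j$ of $H$, and then invoke the one-to-one correspondence \eqref{correspondence pure spinors and MFI subspaces} to conclude that this spinor (up to scale) is the pure spinor attached to $H$. Since $H=\langle e_1,\ldots,e_p,g_{p+1},\ldots,g_N\rangle_\mathbb C$ is generated by the $e_i$ with $i\le p$ together with the $g_j$, and any spinor supported on subsets $I$ containing $\{1,\ldots,p\}$ is automatically killed by $e_1\wedge(-)$ for $i\le p$ by \eqref{multiplication} (wedging with $e_i$ when $e_i$ already appears gives zero, and the contraction part $f_i\neg(-)$ vanishes since $f_i$ does not pair with any $e_k$, $k>p$), it suffices to check $g_j\cdot a_H=0$ for $j=p+1:N$. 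After factoring out the common $\bold{e}_{[p]}$, this reduces to a computation purely inside $\bigwedge^\bullet\mathbb C^{N-p}$ with the generators $g_j=f_j+\sum_{k>p}\alpha_{kj}e_k$, so I would henceforth assume $p=0$ and prove $g_j\cdot\big(\sum_{I\subset[N]}\Pf(A_I)\bold e_I\big)=0$.

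The core of the argument is therefore a Pfaffian identity. Applying $g_j\cdot(-)$ via \eqref{multiplication} to the term $\Pf(A_I)\bold e_I$ yields two kinds of contributions: the contraction $f_j\neg(\Pf(A_I)\bold e_I)$, which is nonzero only when $j\in I$ and removes $e_j$ from $\bold e_I$ with an appropriate sign; and the wedge $e_k\wedge(\Pf(A_I)\bold e_I)$ weighted by $\alpha_{kj}$, which is nonzero only when $k\notin I$ and inserts $e_k$ into $\bold e_I$ with a sign. Fixing an arbitrary target multi-index $J$ and collecting all terms of $g_j\cdot a_H$ proportional to $\bold e_J$, one gets, on the one hand, $\pm\Pf(A_{J\cup\{j\}})$ coming from the contraction applied to $I=J\cup\{j\}$ (present only if $j\notin J$), and on the other hand a sum $\sum_{k\in J}\pm\,\alpha_{kj}\,\Pf(A_{J\setminus\{k\}})$ coming from the wedge applied to $I=J\setminus\{k\}$. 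Up to reconciling the signs, the vanishing of the $\bold e_J$-coefficient is exactly the Laplace-type (cofactor) expansion of the Pfaffian $\Pf(A_{J\cup\{j\}})$ along the row/column indexed by $j$: namely $\Pf(A_{J\cup\{j\}})=\sum_{k\in J}(-1)^{?}\,\alpha_{kj}\,\Pf(A_{(J\cup\{j\})\setminus\{j,k\}})$. I would quote this standard Pfaffian expansion and then carefully match it against the Koszul-type signs $(-1)^{i+1}$ in the definition of $\neg$ and the insertion signs in $e_k\wedge\bold e_J$.

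The main obstacle is precisely this sign bookkeeping: the exterior-algebra signs (position of $j$ inside $\bold e_{J\cup\{j\}}$ for the contraction, position of $k$ inside $\bold e_J$ for the wedge) must be shown to coincide with the sign convention in the cofactor expansion of the Pfaffian. This is not conceptually deep but is the step where an error would most easily creep in; I would handle it by choosing, once and for all, the convention of writing every basis monomial with indices in increasing order, computing the sign of moving $e_j$ to the front (giving $(-1)^{(\text{number of elements of }J\text{ less than }j)}$) and likewise for $e_k$, and comparing with the known Pfaffian identity $\Pf(B)=\sum_{k\ne j}(-1)^{j+k+1+[k<j]}b_{jk}\Pf(B_{\widehat{j},\widehat{k}})$ on the relevant submatrix. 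A useful sanity check along the way: the lowest-degree term $I=\emptyset$ gives $\Pf(A_\emptyset)=1$, so $a_H$ has the constant term $\mathbbm 1$ as in the highest-weight normalization, and the top-degree behavior recovers $\Pf(A)\,\bold e_{[N]}$; one can also test $N-p=2$ by hand, where $a_H=\mathbbm 1+\alpha_{p+1,p+2}\,e_{p+1}\wedge e_{p+2}$ and $g_{p+1}\cdot a_H=g_{p+2}\cdot a_H=0$ is immediate. Once the signs are settled for a single $g_j$, the same computation applies verbatim to every $j$, and the correspondence \eqref{correspondence pure spinors and MFI subspaces} finishes the proof.
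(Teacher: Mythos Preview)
Your approach is correct and coincides with the paper's, which simply records that the proposition ``follows by a direct computation of the Clifford actions $g_j\cdot a_H=0$ for any $j=p+1:N$'' without spelling out details; you have supplied exactly those details, reducing to $p=0$ and recognizing the vanishing of the $\bold e_J$--coefficient as the row expansion of the Pfaffian. One small correction: in your treatment of $e_i\cdot a_H$ for $i\le p$ there is no contraction term at all (since $e_i\in E$ has trivial $E^\vee$--component in \eqref{multiplication}), so the vanishing comes purely from $e_i\wedge a_H=0$; your conclusion is unaffected.
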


\paragraph{Hamming distance of varieties.} Given $X\subset \mathbb P^M$ a projective variety, the {\em Hamming distance} $d(p,q)$ between two points $p,q\in X$, already considered in \cite{baur2007secant,abo2009non, galganostaffolani2022grass}, is the minimum number of lines of $\mathbb P^M$ lying in $X$ and connecting $p$ to $q$, that is
\begin{equation}\label{def:hamming distance}
	d(p,q):=\min \left\{ r \in \mathbb N \ | \ \exists \text{ distinct } p_1,\ldots, p_{r-1} \in X, \ L(p_i,p_{i+1})\subset X \ \forall i=0:r-1 \right\}
	\end{equation}
where we set $p_0=p$, $p_r=q$ and $L(p_i,p_{i+1})$ is the line passing through $p_i$ and $p_{i+1}$. In particular, $d(p,q)=0$ if and only if $p=q$, while $d(p,q)=1$ if and only if $L(p,q)\subset X$ and $p\neq q$. The {\em diameter} of $X$ is $\diam(X) := \max_{p,q \in X}\left\{d(p,q)\right\}$.

\begin{obs}\label{rmk:hamming for G-varieties}
	Given $G$ a group, $W$ a representation of $G$ and $X\subset \mathbb P(W)$ a variety which is invariant under the $G$-action induced on $\mathbb P(W)$, then the $G$-action preserves the Hamming distance between points in $X$, that is $d(g\cdot p, g\cdot q)=d(p,q)$.
	\end{obs}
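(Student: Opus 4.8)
The plan is to unwind the definition of the Hamming distance \eqref{def:hamming distance} and use only the fact that the $G$--action on $\mathbb P(W)$ is by collineations. First I would recall that a representation $\rho\colon G\to \GL(W)$ gives, for each $g\in G$, a linear isomorphism of $W$ and hence an automorphism $\bar g$ of $\mathbb P(W)$; being induced by a linear map, $\bar g$ carries projective lines to projective lines, so that for distinct $p,q\in\mathbb P(W)$ one has $\bar g\bigl(L(p,q)\bigr)=L(\bar g\cdot p,\bar g\cdot q)$. Since by hypothesis $\bar g(X)=X$, the restriction $\bar g|_X$ is a bijection of $X$ sending a line contained in $X$ to a line contained in $X$.

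Next I would transport an optimal chain through $\bar g$. Assume $d(p,q)=r<\infty$ and choose distinct $p_0=p,p_1,\dots,p_{r-1},p_r=q\in X$ with $L(p_i,p_{i+1})\subset X$ for all $i=0:r-1$. Applying $\bar g$ produces points $\bar g\cdot p_0,\dots,\bar g\cdot p_r\in X$ which are again pairwise distinct because $\bar g$ is injective, and which satisfy $L(\bar g\cdot p_i,\bar g\cdot p_{i+1})=\bar g\bigl(L(p_i,p_{i+1})\bigr)\subset\bar g(X)=X$. This is therefore an admissible chain from $\bar g\cdot p$ to $\bar g\cdot q$ inside $X$, giving $d(\bar g\cdot p,\bar g\cdot q)\le r=d(p,q)$. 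Running the same argument with $g^{-1}$ applied to the pair $(\bar g\cdot p,\bar g\cdot q)$ yields the reverse inequality $d(p,q)\le d(\bar g\cdot p,\bar g\cdot q)$, so the two distances coincide; in particular one of them is finite if and only if the other is, which also covers the convention $d=\infty$ when no chain exists.

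There is essentially no serious obstacle: the statement is a formal consequence of the action being by automorphisms of the pair $(\mathbb P(W),X)$ that are linear on $W$. The only two points deserving a word of care are that $\bar g$ preserves \emph{distinctness} of the intermediate points of a chain (injectivity of the group action on $\mathbb P(W)$) and that it genuinely sends lines to lines rather than to other curves, which is immediate from linearity of $\rho(g)$ on $W$. The same reasoning, with $p=q$, also recovers the trivial case $d(g\cdot p,g\cdot p)=0=d(p,p)$.
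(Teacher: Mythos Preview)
Your argument is correct. The paper states this as a remark without proof, treating it as an immediate consequence of the definitions; your write-up makes explicit precisely the observation the paper leaves implicit, namely that the induced $G$--action on $\mathbb P(W)$ is by collineations, hence sends line-chains in $X$ to line-chains in $X$, and then applies the usual ``$g$ and $g^{-1}$'' trick to get both inequalities.
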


For certain rational homogeneous varieties, called cominuscule, (among which, the Spinor varieties of $D$--type) the above distance coincides with the minimum possible degree of rational curves passing through the two points \cite[Lemma 4.2]{buch2013}. From the proof of \cite[Proposition 4.5]{buch2013} we get the following result.

\begin{prop}\label{prop:hamming distance}
	Let $[a],[b]\in \mathbb S_{N}^+$ be two pure spinors and let $H_a,H_b\in \OG^+(N,V)$ be their corresponding maximal fully isotropic subspaces. Then \[d([a],[b])=\frac{\codim_{H_a}(H_a \cap H_b)}{2} \ . \]
\end{prop}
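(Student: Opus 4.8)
The plan is to reduce the statement to the known result from \cite{buch2013} and then re-express it in terms of the isotropic subspaces. First I would recall that the Spinor variety $\mathbb S_N^+$ is cominuscule, so by \cite[Lemma 4.2]{buch2013} the Hamming distance $d([a],[b])$ coincides with the minimal degree of a rational curve joining $[a]$ and $[b]$ in $\mathbb S_N^+$. The proof of \cite[Proposition 4.5]{buch2013} computes this minimal degree combinatorially; in the case of the maximal orthogonal Grassmannian it is expressed through the relative position of the two isotropic subspaces. So the task is to translate that combinatorial answer, via the one-to-one correspondence \eqref{correspondence pure spinors and MFI subspaces}, into the formula $\tfrac12\codim_{H_a}(H_a\cap H_b)$.

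The key steps, in order, are as follows. Step one: using the $\Spin_{2N}$-action (which preserves the Hamming distance by Remark \ref{rmk:hamming for G-varieties}, and which acts on pairs $(H_a,H_b)$ via $\SO(V)$), put the pair $(H_a,H_b)$ into a normal form. Since both subspaces are maximal isotropic and lie in the same component $\OG^+(N,V)$, a standard fact about orthogonal groups is that the orbit of the pair under $\SO(V)$ is determined by $k:=\codim_{H_a}(H_a\cap H_b)=\dim H_a-\dim(H_a\cap H_b)$, and $k$ is necessarily even because $H_a$ and $H_b$ lie in the same family. Concretely one can take $H_a=E=\langle e_1,\dots,e_N\rangle$ and $H_b=\langle e_1,\dots,e_{N-k},f_{N-k+1},\dots,f_N\rangle$. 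Step two: via Proposition \ref{prop:from MFI to pure spinor} (or directly via \eqref{correspondence pure spinors and MFI subspaces}) identify the corresponding pure spinors: $a_{H_a}=\mathbf{e}_{[N]}$ and $a_{H_b}=\mathbf{e}_{[N-k]}$. Step three: invoke the combinatorial formula from the proof of \cite[Proposition 4.5]{buch2013} for the minimal degree of a rational curve between these two torus-fixed points of $\mathbb S_N^+$; in the cominuscule combinatorics this distance is measured by the number of boxes by which the two corresponding ``shifted staircase'' diagrams differ, which in the normal form above equals $k/2$. Step four: combine with Step one of Remark \ref{rmk:hamming for G-varieties} to conclude $d([a],[b])=k/2=\tfrac12\codim_{H_a}(H_a\cap H_b)$ for arbitrary $[a],[b]$.

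The main obstacle I expect is Step three, namely making precise the passage from \cite[Proposition 4.5]{buch2013} — which is stated in the language of minimal degrees of rational curves and cominuscule combinatorics — to the explicit count $k/2$ in terms of the normal form. One has to be careful that the relevant ``distance'' in loc.\ cit.\ is indeed realised by a chain of lines (degree-one curves) of the stated length, i.e.\ that the minimal-degree curve can be taken to be a connected chain of lines, so that it matches the definition \eqref{def:hamming distance} of Hamming distance; this is exactly what the cominuscule property buys us. A secondary technical point is verifying the parity claim, that $\codim_{H_a}(H_a\cap H_b)$ is even whenever $H_a,H_b$ lie in the same component $\OG^+(N,V)$ — this follows from the definition of the two components via the parity of $\codim_E(H\cap E)$ together with the fact that changing component changes this parity, but it should be stated explicitly since the formula only makes sense (gives an integer) because of it. Everything else is either the group-theoretic normal form, which is routine, or a direct application of results already quoted in the excerpt.
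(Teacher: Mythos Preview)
Your proposal is aligned with the paper's approach: the paper does not give its own proof of this proposition but simply states that it follows ``from the proof of \cite[Proposition 4.5]{buch2013}'', exactly the source you invoke in Step three. Your sketch fleshes out the translation (normal form for the pair of isotropic subspaces, identification of the pure spinors, parity of the codimension) that the paper leaves implicit, so there is nothing to correct.
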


\noindent As a consequence, for any $N \geq 2$ the Spinor varieties $\mathbb S_{N}^+$ have diameter $\left\lfloor \frac{N}{2} \right\rfloor$ (cf. table in \cite[Sec.\  4]{buch2013}). Finally, as Spinor varieties are intersection of quadrics, for any two distinct points $[a],[b]\in \mathbb S_{N}^+$ by Bèzout one gets $d([a],[b])\geq 2$ if and only if $L([a],[b])\cap \mathbb S_{N}^+=\{[a],[b]\}$.

\paragraph{Secant varieties.} The {\em secant variety of lines} $\sigma_2(\mathbb S_{N}^+) \subset \mathbb P(\bigwedge^{ev}E)$ of the Spinor variety $\mathbb S_{N}^+\subset \mathbb P(\bigwedge^{ev}E)$ is the union of all secant and tangent lines to $\mathbb S_{N}^+$ in $\mathbb P(\bigwedge^{ev}E)\simeq \mathbb P^{2^{N-1}-1}$. We denote by $\sigma_2^\circ(\mathbb S_N^+):=\{[a+b]\in \sigma_2(\mathbb S_N^+) \ | \ [a],[b]\in \mathbb S_N^+\}$ the set of all points in $\sigma_2(\mathbb S_N^+)$ lying on a bisecant line to $\mathbb S_N^+$. As a rational homogeneous variety, it is known that $\sigma_2(\mathbb S_{N}^+)=\overline{\Spin_{2N}\cdot [v_{\omega_i} + \ell_{\omega_i}]}$ is quasi-homogeneous \cite[Theorem 1.4]{zak1993tangents} and {\em non-defective} for any $N$ \cite{Kaj99,Ang10}, that is
\begin{equation}\label{dim of sec} 
\dim\sigma_2(\mathbb S_{N}^+) = \min\left\{2\dim\mathbb S_{N}^+ +1, \dim\mathbb P\left(\bigwedge^{ev}E\right) \right\} =\begin{cases}
2^{N-1} -1 & \text{for } N\leq 6\\
N(N-1)+1 & \text{for } N\geq 6
\end{cases} \ . 
\end{equation}
\noindent For $N \leq 5$ the secant variety $\sigma_2(\mathbb S_{N}^+)$ overfills the ambient space $\mathbb P^{2^{N-1}-1}$ (i.e. $\dim\sigma_2(\mathbb S_{N}^+)=2^{N-1}-1\lneq 2\dim \mathbb S_{N}^+ +1$), for $N \geq 7$ it is strictly contained, while for $N=6$ it perfectly fills $\mathbb P^{31}$ (in the sense $\dim\sigma_2(\mathbb S_{N}^+)=2\dim(\mathbb S_{N}^+)+1=2^{N-1}-1$): the latter case is said to be {\em perfect}.

\paragraph{Identifiability.} For any $[p] \in \sigma_2^\circ(\mathbb S_N^+)$, the {\em decomposition locus} of $p$ is the set of all pairs of pure spinors in $\mathbb S_N^+$ giving a decomposition of $[p]$, that is
\[ Dec_{\mathbb S_N^+}([p]) := \left\{([a],[b]) \ | \ [a],[b] \in \mathbb S_N^+,\ [p] \in \langle [a],[b]\rangle  \right\} \subset (\mathbb S_N^+)^2_{/\mathfrak S_2} \ , \]
where $(\mathbb S_N^+)^2_{/\mathfrak S_2}$ denotes the symmetric quotient of $(\mathbb S_N^+)^2$ by the symmetric group $\mathfrak{S}_2$. An element $([a],[b]) \in Dec_{\mathbb S_N^+}([p])$ is called a {\em decomposition} of $[p]$.\\
\indent A point $[p]\in \sigma_2^\circ(\mathbb S_N^+)$ is {\em identifiable} if $Dec_{\mathbb S_N^+}([p])$ is a singleton. Geometrically, this means that $[p]$ lies on a unique bisecant line to $\mathbb S_N^+$. Otherwise one says that $[p]$ is {\em unidentifiable}. For any subset $Y\subset \sigma_2^\circ(\mathbb S_N^+)$, we say that $Y$ is (un)identifiable if any point of $Y$ is so. In particular, an orbit is (un)identifiable if and only if a representative of it is so.

\paragraph*{Tangential-identifiability} The {\em tangential variety} of $\mathbb S_N^+$ in $\mathbb P(\bigwedge^{ev}E)$ is the union of all lines in $\mathbb P(\bigwedge^{ev}E)$ which are tangent to $\mathbb S_N^+$, i.e. $\tau(\mathbb S_N^+):=\bigcup_{p\in \mathbb S_N^+}T_p\mathbb S_N^+ \ \subset \mathbb P(\bigwedge^{ev}E)$. In particular, it holds $\mathbb S_N^+\subset \tau(\mathbb S_N^+)\subset \sigma_2(\mathbb S_N^+)$, and $\sigma_2(\mathbb S_N^+)=\sigma_2^\circ(\mathbb S_N^+)\cup \tau(\mathbb S_N^+)$ (the union may be non-disjoint). Moreover, from Zak's key result \cite[Theorem 1.4]{zak1993tangents} it holds that either $\dim\tau(\mathbb S_N^+) = 2\dim \mathbb S_N^+$ and $\dim \sigma_2(\mathbb S_N^+)=2\dim \mathbb S_N^+ +1$, or $\tau(\mathbb S_N^+)=\sigma_2(\mathbb S_N^+)$.

\begin{Def}\label{def:cactus}
	A tangent point $[q] \in \tau(\mathbb S_N^+)$ is {\em tangential-identifiable} if it lies on a unique tangent line to $\mathbb S_N^+$, or equivalently if there exists a unique $[p] \in \mathbb S_N^+$ such that $[q] \in T_{[p]}\mathbb S_N^+$. Otherwise it is {\em tangential-unidentifiable}.
\end{Def}

\noindent We say that an orbit is tangential-(un)identifiable if all of its elements are so.

\begin{Def}\label{def:tangential-locus}
	Given a tangent point $[q] \in \tau(\mathbb S_N^+)$, its {\em tangential-locus} is the set of points $p \in \mathbb S_N^+$ such that $q \in T_{p}\mathbb S_N^+$.
\end{Def}

\noindent Clearly, if $q$ is tangential-identifiable, then its tangential-locus is given by a single point at $\mathbb S_N^+$.

\paragraph{Nonabelian apolarity.} The classical apolarity action \cite{iarrobino1999power} and the skew-apolarity action \cite[ Definition 4]{ABMM21} are particular cases of a more general apolarity, namely the {\em nonabelian apolarity}, introduced at first in \cite[Section 1.3]{LO13}. Let $X \subset \mathbb P^n$ be a projective variety and let $\cal L$ be a very ample line bundle on $X$ giving the embedding $X \subset \mathbb P^n = \mathbb P( H^0(X,\cal L)^\vee)$. Let $\cal E$ be a rank--$e$ vector bundle on $X$ such that $H^0(X,\cal E^{\vee}\otimes \cal L)$ is not trivial, where $\cal E^\vee$ denotes the bundle dual to $\cal E$. The natural contraction map $H^0(X,\cal E) \otimes H^0(X,\cal E^{\vee}\otimes \cal L) \longrightarrow H^0(X,\cal L)$ leads to the morphism $A: H^0(X,\cal E) \otimes H^0(X,\cal L)^\vee \longrightarrow H^0(X,\cal E^{\vee} \otimes \cal L)^\vee$ and, after fixing a given $f \in H^0(X,\cal L)^\vee$, one gets the linear map
\[A_f : H^0(X,\cal E) \longrightarrow H^0(X,\cal E^{\vee}\otimes \cal L)^\vee \]
defined as $A_f(s) := A (s \otimes f)$ for any $s \in H^0(X,\cal E)$, and called {\em nonabelian apolarity action}. 
Let $H^0(X,\cal I_Z \otimes \cal E)$ and $H^0(X,\cal I_Z \otimes \cal E^{\vee} \otimes \cal L )$ be the spaces of global sections vanishing on a $0$--dimensional subscheme $Z\subset X$, and let $\langle Z\rangle\subset H^0(X,\cal L)^\vee$ be the linear span of $Z$. For a proof of the following result we refer to \cite[Proposition 5.4.1]{LO13}.

\begin{prop}\label{prop:nonabelian}
	Let $f\in H^0(X,\cal L)^\vee$ and let $Z \subset X$ be a $0$--dimensional subscheme such that $f \in \langle Z\rangle$. Then it holds $H^0(X,\cal I_Z \otimes \cal E) \subseteq \Ker A_f$ and $H^0(X,\cal I_Z \otimes \cal E^{\vee} \otimes \cal L ) \subseteq \im A_f^{\perp}$.
\end{prop}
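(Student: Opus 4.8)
The plan is to reduce both inclusions to a single observation: multiplying a section that vanishes on $Z$ by any other section yields a section that still vanishes on $Z$, and the hypothesis on $f$ says exactly that $f$ kills the sections of $\cal L$ vanishing on $Z$. First I would make the meaning of $f\in\langle Z\rangle$ precise. Since $X\subset \mathbb P^n=\mathbb P(H^0(X,\cal L)^\vee)$ is the embedding by the complete linear system $|\cal L|$, the linear forms on $\mathbb P^n$ vanishing along $\langle Z\rangle$ are precisely the elements of $H^0(X,\cal L)$ whose restriction to $Z$ is zero, i.e. $H^0(X,\cal I_Z\otimes\cal L)$; dually this gives
\[ \langle Z\rangle = H^0(X,\cal I_Z\otimes\cal L)^\perp \subseteq H^0(X,\cal L)^\vee , \]
so $f\in\langle Z\rangle$ means exactly that $f$ annihilates $H^0(X,\cal I_Z\otimes\cal L)$.

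Next I would unwind the definition of $A_f$: for $s\in H^0(X,\cal E)$ the functional $A_f(s)\in H^0(X,\cal E^\vee\otimes\cal L)^\vee$ sends $t\mapsto f(s\cdot t)$, where $s\cdot t\in H^0(X,\cal L)$ denotes the image of $s\otimes t$ under the contraction $H^0(X,\cal E)\otimes H^0(X,\cal E^\vee\otimes\cal L)\to H^0(X,\cal L)$. The key sheaf-level point is that the multiplication $\cal E\otimes\cal E^\vee\otimes\cal L\to\cal L$ carries $\cal I_Z\otimes\cal E\otimes\cal E^\vee\otimes\cal L$ into $\cal I_Z\otimes\cal L$, because $\cal I_Z$ is an ideal sheaf; therefore, if either $s\in H^0(X,\cal I_Z\otimes\cal E)$ or $t\in H^0(X,\cal I_Z\otimes\cal E^\vee\otimes\cal L)$, then $s\cdot t\in H^0(X,\cal I_Z\otimes\cal L)$, hence $f(s\cdot t)=0$.

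The two inclusions then follow formally. For the first, let $s\in H^0(X,\cal I_Z\otimes\cal E)$: for every $t\in H^0(X,\cal E^\vee\otimes\cal L)$ one has $s\cdot t\in H^0(X,\cal I_Z\otimes\cal L)$, so $(A_f(s))(t)=f(s\cdot t)=0$; as $t$ is arbitrary, $A_f(s)=0$, i.e. $s\in\Ker A_f$. For the second, let $t\in H^0(X,\cal I_Z\otimes\cal E^\vee\otimes\cal L)$: for every $s\in H^0(X,\cal E)$ again $s\cdot t\in H^0(X,\cal I_Z\otimes\cal L)$, so $(A_f(s))(t)=f(s\cdot t)=0$; since this holds for all $s$, the functional $t$ vanishes on $\im A_f$, i.e. $t\in(\im A_f)^\perp$. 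The only genuine subtlety — the part I expect to require care — is the scheme-theoretic bookkeeping: checking that the contraction really respects the $\cal I_Z$-filtration (so that products of sections vanishing on $Z$ vanish on $Z$ as subschemes, not merely set-theoretically), and that the identification $\langle Z\rangle = H^0(X,\cal I_Z\otimes\cal L)^\perp$ is valid for an arbitrary, possibly non-reduced, $0$-dimensional $Z$, which is exactly where completeness of the linear system $|\cal L|$ is used. Everything else is a direct manipulation of the pairing.
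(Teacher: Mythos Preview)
Your argument is correct and is the standard proof of this fact. The paper does not give its own proof of this proposition but refers to \cite[Proposition 5.4.1]{LO13}; your write-up is essentially the argument one finds there, so there is nothing to compare.
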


\noindent It is worth remarking that the above result holds for non-reduced subschemes $Z\subset X$ too. The schematic non-reduced version of the nonabelian apolarity has already been considered in \cite[Theorem 6.10]{ottaviani2020tensor} for Veronese varieties, and more generally in \cite[Proposition 7]{galkazka2017vector}. In the classical apolarity, the case of minimal subschemes is considered with respect to cactus varieties and cactus rank \cite{cactusvarieties, cactusrank}, while for non-minimal subschemes see \cite{bernarditaufer}. Finally, for reduced subschemes we recall the following result \cite[Proposition 4.3]{oedingottaviani}.

\begin{prop}\label{prop:reduced-nonabelian}
	Let $f\in H^0(X,\cal L)^\vee$ and let $Z \subset X$ be a $0$--dimensional reduced subscheme of length $r$, minimal with respect to the property $f \in \langle Z\rangle$. Assume that $\rk A_f=r \cdot \rk \cal E$. Then it holds $H^0(X,\cal I_Z \otimes \cal E) = \Ker A_f$ and $H^0(X,\cal I_Z \otimes \cal E^{\vee} \otimes \cal L ) = \im A_f^{\perp}$. In particular, $Z$ is contained in the common zero locus of $\Ker A_f$ and $\im A_f^\perp$.
	\end{prop}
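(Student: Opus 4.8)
The plan is to promote the two inclusions of Proposition \ref{prop:nonabelian} to equalities by a dimension count driven by the rank hypothesis $\rk A_f = r\cdot\rk\cal E$. Write $e:=\rk\cal E$ and note that $\rk(\cal E^\vee\otimes\cal L)=e$ as well; let $\cal F$ denote either $\cal E$ or $\cal E^\vee\otimes\cal L$. On $X$ one has the restriction exact sequence
\[
0\longrightarrow \cal I_Z\otimes\cal F\longrightarrow \cal F\longrightarrow \cal F|_Z\longrightarrow 0 \ ,
\]
and since $Z$ is reduced and $0$--dimensional of length $r$, the sheaf $\cal F|_Z$ is a skyscraper with $h^0(\cal F|_Z)=re$. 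Taking global sections identifies $H^0(\cal I_Z\otimes\cal F)$ with the kernel of the evaluation map $\mathrm{ev}_Z:H^0(\cal F)\to H^0(\cal F|_Z)$, whose image has dimension at most $re$; hence $h^0(\cal I_Z\otimes\cal F)\geq h^0(\cal F)-re$ for both choices of $\cal F$.

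Next I would bring in the rank hypothesis. By construction $A_f:H^0(\cal E)\to H^0(\cal E^\vee\otimes\cal L)^\vee$ has rank $re$, so $\dim\Ker A_f=h^0(\cal E)-re$, while the annihilator $\im A_f^\perp\subseteq H^0(\cal E^\vee\otimes\cal L)$ has dimension $h^0(\cal E^\vee\otimes\cal L)-\rk A_f=h^0(\cal E^\vee\otimes\cal L)-re$. Combining with the inclusions of Proposition \ref{prop:nonabelian}, the chain
\[
h^0(\cal E)-re\ \leq\ h^0(\cal I_Z\otimes\cal E)\ \leq\ \dim\Ker A_f\ =\ h^0(\cal E)-re
\]
is forced to consist of equalities, whence $H^0(\cal I_Z\otimes\cal E)=\Ker A_f$; the analogous chain for $\cal E^\vee\otimes\cal L$ gives $H^0(\cal I_Z\otimes\cal E^\vee\otimes\cal L)=\im A_f^\perp$. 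The final assertion is then immediate: by these two equalities every section in $\Ker A_f$ and every section in $\im A_f^\perp$ vanishes on $Z$, so $Z$ is contained in their common zero locus.

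The proof is essentially bookkeeping once the restriction sequence is written down — no deep geometric input is required, and minimality of $Z$ plays no role in this implication (it is simply the natural situation in which the rank hypothesis can hold). The only points demanding a moment's care, and where I would expect the (mild) friction to lie, are the identification $H^0(\cal I_Z\otimes\cal F)=\Ker(\mathrm{ev}_Z)$ together with the bound $\dim\im(\mathrm{ev}_Z)\leq re$ in the reduced case, and the elementary linear-algebra fact that for a subspace $W$ of the dual $U^\vee$ of a finite-dimensional space one has $\dim W^\perp=\dim U-\dim W$, here applied to $W=\im A_f$ inside $U^\vee=H^0(\cal E^\vee\otimes\cal L)^\vee$ to pin down the codimension of $\im A_f^\perp$.
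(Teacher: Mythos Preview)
The paper does not actually give a proof of this proposition: it simply refers the reader to \cite[Proposition 4.3]{oedingottaviani}. Your argument is correct and self-contained, and it is essentially the standard dimension-count that one finds in the cited reference: the restriction sequence bounds $h^0(\cal I_Z\otimes\cal F)$ from below by $h^0(\cal F)-re$, while Proposition~\ref{prop:nonabelian} together with the rank hypothesis bounds it from above by the same number, forcing equality. Your remark that minimality of $Z$ is not used in the implication is also accurate (in fact the rank hypothesis $\rk A_f=re$ already forces $Z$ to be minimal, since a strictly shorter $Z'$ with $f\in\langle Z'\rangle$ would give $\rk A_f\leq r'e<re$).
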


\section{Clifford apolarity}\label{sec:clifford apolarity}

\indent \indent In this section we analyze the nonabelian apolarity in the case of Spinor varieties, obtaining what we call {\em Clifford apolarity}. We also exhibit the vanishing condition of the global sections of a certain vector bundle on $\mathbb S_{N}^+$. We assume the setting in Remark \ref{rmk:setting}. \\
\hfill\break
\indent Consider the half-spin representations $V_{\omega_{N-1}^{D_N}}=\bigwedge^{od}E$ and $V_{\omega_{N}}^{D_N}=\bigwedge^{ev}E$, and consider the standard representation $V_{\omega_1}^{D_N}=V$. Let $\cal O(1)$ be the very ample line bundle $\cal O(1)$ on $\mathbb S_{N}^+$. Via the identification with $\OG^+(N,V)$, let $\cal U$ be the rank--$N$ universal bundle on $\mathbb S_{N}^+$ obtained as the pullback of the universal bundle on $\Gr(N,V)$. The bundles $\cal O(1)$, $\cal U^\vee$ and $\cal U(1):=\cal U\otimes \cal O(1)$ on $\mathbb S_{N}^+$ are homogeneous and the corresponding representations of the parabolic subgroup $P_{\omega_N}$ are irreducible with heighest weights $\omega_{N}$, $\omega_1$ and $\omega_{N-1}$ respectively: from Borel--Weil's Theorem one gets
\[ H^0\big(\mathbb S_{N}^+,\cal O(1)\big)\simeq \left( \bigwedge^{ev}E\right)^\vee \ \ \ , \ \ \ H^0\big(\mathbb S_{N}^+,\cal U(1)\big)\simeq \left( \bigwedge^{od}E\right)^\vee \ \ \ \, \ \ \ H^0\big(\mathbb S_{N}^+,\cal U^\vee\big)\simeq V^\vee \ . \]

\noindent The contraction map $H^0(\mathbb S_{N}^+, \cal U^\vee) \otimes H^0(\mathbb S_{N}^+, \cal U(1))  \rightarrow  H^0(\mathbb S_{N}^+, \cal O(1))$ is equivalent to
\[ \left(V_{\omega_1}^{D_N}\right)^\vee \otimes \left( V_{\omega_{N-1}}^{D_N}\right)^\vee \longrightarrow \left( V_{\omega_N}^{D_N} \right)^\vee \ ,  \]
and, from the duality of half-spin representations (as $N$ is even), we get
\begin{equation}\label{contraction b/w spin reps} 
	\left(E \oplus E^\vee\right) \otimes \bigwedge^{od}E  \longrightarrow  \bigwedge^{ev}E \ .
\end{equation}
\noindent Such map is uniquely determined as $\Spin(V)$-equivariant morphism (up to scalars): this is a consequence of Schur's lemma applied to the following result. 

\begin{teo}\label{thm:uniqueness of contraction map for spin reps}
	The irreducible $\Spin(V)$-module $\bigwedge^{ev}E$ (resp. $\bigwedge^{od}E$) appears with multiplicity $1$ in the $\Spin(V)$-module $V \otimes \bigwedge^{od}E$ (resp. $V \otimes \bigwedge^{ev}E$).
\end{teo}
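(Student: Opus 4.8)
The plan is to prove this via the standard machinery of weight decompositions for Lie algebra representations, combined with the well-known structure of $\bigwedge^{ev}E$ and $\bigwedge^{od}E$ as $\mathfrak{so}_{2N}$-modules. First I would identify the two relevant facts from the literature on Clifford algebras: the half-spin representations are $S^+ = \bigwedge^{ev}E$ and $S^- = \bigwedge^{od}E$, and Clifford multiplication $V \otimes S^\pm \to S^\mp$ is $\mathrm{Spin}(V)$-equivariant and surjective; moreover $S = S^+ \oplus S^-$ is a module over the full Clifford algebra, with $V$ acting by the odd part. The claim is then a multiplicity statement: $S^+$ occurs exactly once in $V \otimes S^-$ (and symmetrically).

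The cleanest route is via \emph{weights}. The weights of $V = V_{\omega_1}^{D_N}$ are $\{\pm\varepsilon_i : i=1,\dots,N\}$, each with multiplicity one. The weights of $S^\pm$ are $\frac12(\pm\varepsilon_1 \pm \cdots \pm \varepsilon_N)$ with an even (resp. odd) number of minus signs, again each with multiplicity one; in particular the highest weight of $S^+$ is $\omega_N = \frac12(\varepsilon_1+\cdots+\varepsilon_N)$. To compute the multiplicity of the irreducible $V_{\omega_N}^{D_N}$ inside $V \otimes S^-$, I would compute the dimension of the space of highest-weight vectors of weight $\omega_N$ in $V \otimes S^-$. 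A weight vector of weight $\omega_N$ in $V \otimes S^-$ is a sum of tensors $v_\mu \otimes s_\nu$ with $\mu + \nu = \omega_N$, where $\mu$ is a weight of $V$ and $\nu$ is a weight of $S^-$. Since $\mu = \pm\varepsilon_i$ and $\nu = \omega_N - \mu$ must be a weight of $S^-$, a quick check shows $\mu = \varepsilon_i$ forces $\nu = \frac12(\varepsilon_1+\cdots+\varepsilon_N) - \varepsilon_i = \frac12(\varepsilon_1+\cdots-\varepsilon_i+\cdots+\varepsilon_N)$, which has exactly one minus sign, hence lies in $S^-$; while $\mu = -\varepsilon_i$ gives $\nu$ with one entry equal to $\frac32$, not a weight of $S^-$. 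Thus the weight space $(V\otimes S^-)_{\omega_N}$ is $N$-dimensional, spanned by $e_i \otimes s_{(i)}$ for $i=1,\dots,N$ (where $s_{(i)}$ is the indicated weight vector). Imposing that a linear combination be annihilated by all positive simple root vectors $E_{\alpha_1},\dots,E_{\alpha_N}$ of $\mathfrak{so}_{2N}$ then cuts this $N$-dimensional space down; the goal is to show the solution space is exactly one-dimensional.

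The main obstacle — and the only real computation — is carrying out this highest-weight-vector calculation explicitly: one must write the action of each simple root vector on $V$ and on $S^-$ in the chosen bases and solve the resulting linear system. I expect it to collapse to a single consistent chain of relations forcing all coefficients to be equal (up to scalar), giving multiplicity one. An alternative, possibly slicker, argument I would keep in reserve: use the decomposition $V \otimes S \cong \Delta$ of Clifford-module type and the fact that $V \otimes S^- \cong S^+ \oplus W$ where $W$ is the kernel of Clifford multiplication, then identify $W$ with a known irreducible (the ``Cartan component'' / twisted-by-$V$ piece, related to $\bigwedge^{N-1}E$-type or the module with highest weight $\omega_1 + \omega_N - \varepsilon_1$) and check by a dimension count that $S^+$ cannot reappear in $W$; the dimension check $\dim(V \otimes S^-) = 2N \cdot 2^{N-1} = N\cdot 2^N$ versus $\dim S^+ = 2^{N-1}$ leaves $\dim W = 2^{N-1}(2N-1)$, which should match a single known irreducible, thereby sealing multiplicity one. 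Either way, Schur's lemma then gives the uniqueness of the $\mathrm{Spin}(V)$-equivariant contraction map up to scalar, as needed for the Clifford apolarity construction.
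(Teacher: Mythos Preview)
Your proposal is correct, but the paper takes a different route. Rather than computing highest-weight vectors for $\mathfrak{so}_{2N}$ directly, the paper restricts everything to the subgroup $\SL(E)\subset\Spin(V)$. Under this restriction $V=E\oplus E^\vee$ and the half-spin modules split as $\bigoplus_j\bigwedge^jE$; applying Pieri's formula to each piece $E\otimes\bigwedge^{2k+1}E$ and $E^\vee\otimes\bigwedge^{2k+1}E$ gives an explicit $\SL(E)$-decomposition of $V\otimes\bigwedge^{od}E$. One then simply observes that a particular $\SL(E)$-constituent of $\bigwedge^{ev}E$ (e.g.\ the trivial summand $\mathbb C$, which can only arise from $E^\vee\otimes E$) occurs with multiplicity one, so the $\Spin(V)$-multiplicity of $\bigwedge^{ev}E$ is at most one.

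What each approach buys: your weight-space method is the textbook route and is entirely self-contained, but it requires writing down the simple-root actions on the $N$-dimensional weight space $(V\otimes S^-)_{\omega_N}$ and solving the resulting linear system. The paper's restriction trick avoids that linear algebra completely: Pieri plus the observation that one $\SL(E)$-piece appears only once is enough. Your second alternative (identifying the Clifford kernel with the single irreducible of highest weight $\omega_1+\omega_{N-1}$ and matching the dimension $2^{N-1}(2N-1)$) is also valid and is closest in spirit to the paper's argument, in that it reads the answer off a known decomposition rather than computing highest-weight vectors by hand.
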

\begin{proof}
	In light of the natural inclusion $\SL(E)\subset \Spin(V)$, for any $\Spin(V)$-module $W$ among the above ones we can consider its restriction as $\SL(E)$-module $\Res_{\SL(E)}^{\Spin(V)}(W)$: we lighten up the notation by simply writing $W$ and specifying its module structure. 
	Since $N$ is even, one can rewrite the $\SL(E)$--module $(E\oplus E^\vee)\otimes \bigwedge^{od}E$ as
	\[ (E\oplus E^\vee) \otimes \bigwedge^{od}E =  \bigoplus_{k=0}^{\frac{N-2}{2}} \left(E \otimes \bigwedge^{2k+1}E\right) \oplus \left(E^\vee \otimes \bigwedge^{2k+1}E\right) \ . \]
	\noindent From Pieri's formula \cite[Sec.\ 9.10.2]{procesi}, $(E\oplus E^\vee) \otimes \bigwedge^{od}E$ decomposes into the $\SL(E)$-modules
	\[V \otimes \bigwedge^{od}E = \mathbb C \oplus 2\left(\bigoplus_{k=1}^{\frac{N-4}{2}}\bigwedge^{2k}E \right) \oplus \bigwedge^{N}E \oplus \left( \bigoplus_{k=0}^{\frac{N-2}{2}} S^{\scriptsize \overbrace{(2,1,\ldots, 1)}^{2k+1}} \oplus  S^{\scriptsize(\overbrace{2, \ldots, 2}^{2k+1}, \overbrace{1,\ldots,1}^{N-2-2k})}\right) \ , \]
	among which there is only one copy of $\bigwedge^{ev}E$. Thus there has to be only one copy in the $\Spin(V)$-module decomposition as well.
\end{proof}

\indent By fixing $q \in \bigwedge^{ev}E$, from the map \eqref{contraction b/w spin reps} we get a $\Spin(V)$-equivariant map $\psi_q: (E \oplus E^\vee) \rightarrow  \bigwedge^{od}E$ which is unique (up to scalars) by Theorem \ref{thm:uniqueness of contraction map for spin reps}, thus it coincides with the map $\psi_q:V\rightarrow \bigwedge E$ defined in \eqref{psi_a}. Moreover, the natural contraction \eqref{contraction b/w spin reps} is equivalent to a map 
\[ \Phi: \bigwedge^{ev}E \otimes \bigwedge^{od}E^\vee \longrightarrow E\oplus E^\vee \]
which again by Theorem \ref{thm:uniqueness of contraction map for spin reps} is uniquely determined as $\Spin(V)$-equivariant morphism: more precisely, it is the projection onto the unique copy of the irreducible $\Spin(V)$-submodule $E\oplus E^\vee$.  

\begin{obs}\label{rmk:from skew to Clifford apolarity}
	By uniqueness, the map $\Phi$ is intrinsically related to the skew-apolarity map defined in \cite[Definition 4]{ABMM21}. Consider the splitting in $\SL(E)$-modules 
	\[ \bigwedge^{ev}E\otimes \bigwedge^{od}E^\vee= \bigoplus_{r=0}^{\frac{N}{2}}\bigoplus_{s=0}^{\frac{N-2}{2}}\left(\bigwedge^{2r}E \otimes \bigwedge^{2s+1}E^\vee\right) \ .\]
	The map $\Phi$ is $\Spin(V)$-equivariant, hence $\SL(E)$-equivariant, so are its restrictions
	\[ \Phi_{2r,2s+1} : \bigwedge^{2r}E\otimes \bigwedge^{2s+1}E^\vee \longrightarrow E\oplus E^\vee \ . \]
	By Schur's lemma, each restriction $\Phi_{2r,2s+1}$ is non-zero if and only if either $E$ or $E^\vee$ appears as irreducible $\SL(E)$-summand in the tensor product on the left-hand side: this happens if and only if $|2r-(2s+1)|=1$. In particular, it holds $\im\left(\Phi_{2r,2s+1}\right)=E$ if $2r= (2s+1)+1$, $\im\left(\Phi_{2r,2s+1}\right)=E^\vee$ if $2r= (2s+1)-1$, and $\im\left(\Phi_{2r,2s+1}\right)=0$ otherwise.\\
	For $2r=(2s+1)+1$ (resp. $2r=(2s+1)-1$), the module  $\bigwedge^{2r}E\otimes \bigwedge^{2s+1}E^\vee$ has a unique copy of $E$ (resp. $E^\vee$) as irreducible $\SL(E)$-submodule, thus (up to scalars) there exists a unique $\SL(E)$-equivariant morphism from the tensor product onto $E$ (resp. $E^\vee$). By uniqueness, the restrictions $\Phi_{2r,2s+1}$ are given (up to composing with a projection $\pi_{E\oplus E^\vee}$ onto $E\oplus E^\vee$) by generalizing the skew-catalecticant maps $\cal C^{s,d-s}_t$ in \cite[Definition 4]{ABMM21} as follows:
	\[
	\begin{matrix}
	\Phi_{2r,2s+1}: & \bigwedge^{2r}E\otimes \bigwedge^{2s+1}E^\vee& \rightarrow & \bigwedge^{2r-(2s+1)}E \oplus \bigwedge^{2s+1-2r}E^\vee & \stackrel{\pi_{E\oplus E^\vee}}{\twoheadrightarrow} & E\oplus E^\vee\\
	& e \otimes f & \mapsto &  \cal C^{2s+1, 2r-(2s+1)}_e(f) + \cal C^{2r,2s+1-2r}_f(e) \end{matrix}  \]
	where $\cal C_t^{s,d-s}=0$ for $s>d$: roughly, depending on the sign of $2r-(2s+1)$, one looks at vectors in $E^\vee$ as derivations on $E$, or viceversa.
\end{obs}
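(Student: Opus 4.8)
\noindent\emph{Proof sketch (proposed).} The plan is to deduce the statement from Theorem~\ref{thm:uniqueness of contraction map for spin reps} together with Schur's lemma and Pieri's formula. First I would reduce to $\SL(E)$: since $\bigwedge^{ev}E=\bigoplus_r\bigwedge^{2r}E$ and $\bigwedge^{od}E^\vee=\bigoplus_s\bigwedge^{2s+1}E^\vee$, the asserted splitting of $\bigwedge^{ev}E\otimes\bigwedge^{od}E^\vee$ into the summands $\bigwedge^{2r}E\otimes\bigwedge^{2s+1}E^\vee$ is immediate, and each summand is $\SL(E)$-stable because its two factors are $\SL(E)$-irreducible. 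As $\SL(E)\subset\Spin(V)$ and $\Phi$ is $\Spin(V)$-equivariant (Theorem~\ref{thm:uniqueness of contraction map for spin reps}), each restriction $\Phi_{2r,2s+1}$ is an $\SL(E)$-equivariant map into $E\oplus E^\vee$; and for $N\geq 4$ the summands $E$ and $E^\vee$ are non-isomorphic $\SL(E)$-irreducibles, so by Schur's lemma $\Phi_{2r,2s+1}\neq 0$ forces $E$ or $E^\vee$ to occur in $\bigwedge^{2r}E\otimes\bigwedge^{2s+1}E^\vee$, and when it occurs with multiplicity one the space of $\SL(E)$-maps onto it is one-dimensional.

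Next I would run the Pieri computation. Using $\bigwedge^{2s+1}E^\vee\cong_{\SL(E)}\bigwedge^{N-2s-1}E$ and decomposing a product of two exterior powers into its Schur modules of two-column shape, one reads off that $E=\bigwedge^1E$ occurs in $\bigwedge^{2r}E\otimes\bigwedge^{2s+1}E^\vee$ exactly when $2r-(2s+1)=1$, and $E^\vee=\bigwedge^{N-1}E$ exactly when $(2s+1)-2r=1$, in each case with multiplicity one; this gives the image $E$, resp.\ $E^\vee$, resp.\ $0$. The point that needs care is that the two extremal pieces $\bigwedge^0E\otimes\bigwedge^{N-1}E^\vee$ and $\bigwedge^NE\otimes\bigwedge^1E^\vee$ are also $\SL(E)$-isomorphic to $E$, resp.\ $E^\vee$, but only for determinant-twist reasons, so one must check separately that $\Phi$ annihilates them. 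The cleanest way is to note that $\Phi$ is, up to a scalar (again by Theorem~\ref{thm:uniqueness of contraction map for spin reps}), the transpose of the Clifford multiplication \eqref{psi_a}, whose two components $e\wedge(-)$ and $f\neg(-)$ shift the exterior degree by exactly $\pm 1$; this degree constraint is precisely $|2r-(2s+1)|=1$, and it simultaneously shows that the scalar attached to each non-extremal piece is nonzero.

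Finally I would identify the nonzero pieces with skew-apolarity: on each piece with $|2r-(2s+1)|=1$ the one-dimensional space of $\SL(E)$-maps onto $E$ (resp.\ $E^\vee$) is spanned by the relevant instance of the skew-catalecticant map of \cite[Definition 4]{ABMM21}, which is nonzero and has the required source and target, so $\Phi_{2r,2s+1}$ must coincide with it up to the projection $\pi_{E\oplus E^\vee}$ and a nonzero scalar; this is the asserted formula, and summing over $(r,s)$ exhibits $\Phi$ as the Clifford analogue of the skew-apolarity map. The main obstacle is the middle step: not the Pieri bookkeeping itself, but the verification that the extremal graded pieces, abstractly isomorphic to $E$ or $E^\vee$, are nevertheless killed by $\Phi$---equivalently, pinning down which copies of $E$ and $E^\vee$ inside $\bigwedge^{ev}E\otimes\bigwedge^{od}E^\vee$ span the distinguished $\Spin(V)$-submodule $V$. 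The transpose-of-Clifford-multiplication description is exactly what makes this clean.
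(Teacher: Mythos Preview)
Your proposal follows the paper's own line of argument (Schur's lemma on the $\SL(E)$-pieces, Pieri, and the uniqueness from Theorem~\ref{thm:uniqueness of contraction map for spin reps}), so in that sense it is essentially the same approach. However, you add a point the remark glosses over, and it is worth recording: the paper's biconditional ``$E$ or $E^\vee$ appears in $\bigwedge^{2r}E\otimes\bigwedge^{2s+1}E^\vee$ if and only if $|2r-(2s+1)|=1$'' is literally false as an $\SL(E)$-statement, because the extremal pieces $(2r,2s+1)=(0,N-1)$ and $(N,1)$ are $\SL(E)$-isomorphic to $E$ and $E^\vee$ respectively (via the determinant twist) even though $|2r-(2s+1)|=N-1$. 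Schur's lemma alone therefore does not rule out a nonzero $\Phi$ on those pieces. Your fix---observing that $\Phi$ is, up to scalar, the transpose of the Clifford multiplication \eqref{multiplication}, whose two components $e\wedge(-)$ and $f\neg(-)$ shift exterior degree by exactly $\pm 1$---is the correct way to close this gap, and it simultaneously shows nonvanishing on the genuine $|2r-(2s+1)|=1$ pieces. The paper's remark implicitly relies on this degree-grading (it is built into Definition~\ref{def:clifford apolarity}) but does not spell it out; your version is more careful on this point.
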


\begin{Def}\label{def:clifford apolarity}
	The {\em Clifford apolarity action} is the $\Spin(E\oplus E^\vee)$-equivariant map 
	\begin{equation}\label{Clifford apolarity}
		\begin{matrix}
			\Phi: & \bigwedge^{ev}E \otimes \bigwedge^{od}E^\vee & \longrightarrow & E\oplus E^\vee\\
			& e \otimes f & \mapsto & C_e(f)_{|_{E}} + C_f(e)_{|_{E^\vee}}
		\end{matrix}
	\end{equation}
	where $C_e(f)_{|_{E}}$ (resp. $C_f(e)_{|_{E^\vee}}$) is the projection onto $E$ (resp. $E^\vee$) of the contraction $f \cdot e\in \bigwedge E$ (resp. $e\cdot f\in \bigwedge E^\vee$) obtained via Clifford multiplication.
\end{Def}

\paragraph*{Vanishing of sections in $H^0(\mathbb S_{N}^+,\cal U(1))$.} Via the isomorphism $H^0(\mathbb S_{N}^+,\cal U(1))\simeq (V_{\omega_{N-1}}^{D_N})^\vee= \bigwedge^{od}E^\vee$, any section $j_f\in H^0(\mathbb S_{N}^+,\cal U(1))$ corresponds to a spinor $f \in \bigwedge^{od}E^\vee$: we describe the zero locus of a section $j_f$ by generalizing an argument from \cite[proof of Prop. 2]{manivel2020double}.\\
\indent For any $[a] \in \mathbb S_{N}^+$, one has the fiber $(\cal U(1))_{[a]} \simeq \Hom((\cal O(1))^\vee_{[a]},\cal U_{[a]})$. In particular, it holds $(\cal O(1))^\vee_{[a]}=\langle a \rangle_\mathbb C\subset \bigwedge^{ev}E$ and $\cal U_{[a]}=H_a:=\{v \in E\oplus E^\vee \ | \ v \cdot a =0\}$. Thus we can identify $j_f([a])\in (\cal U(1))_{[a]}$ with a homomorphism $j_f([a]): \langle a \rangle_\mathbb C \rightarrow H_a$. We want to determine $j_f([a])(\lambda a)\in H_a$ as the scalar $\lambda \in \mathbb C$ varies.

\begin{obs}\label{rmk on clifford scalar product}
	The scalar product $q(\cdot ,\cdot )$ on $V=E\oplus E^\vee$ extends to a scalar product $\langle \cdot , \cdot \rangle$ on $\bigwedge V$ defined on decomposable elements $x=x_1\wedge \ldots \wedge x_k \in \bigwedge^kV$ and $y=y_1\wedge \ldots \wedge y_h\in \bigwedge^hV$ as $\langle x,y\rangle =
	\det(q( x_i,y_j))$ if $k=h$, and zero otherwise. The isomorphism of vector spaces $\bigwedge V\simeq \Cl(V,q)$ defines such scalar product on the Clifford algebra too. The well-known adjointness between exterior product and contraction $\langle v \wedge x,y\rangle=\langle x,v \neg y\rangle$ for $v\in V$, $x \in \bigwedge^{h-1}V$ and $y \in \bigwedge^{h}V$, together with the Clifford multiplication \eqref{multiplication}, implies $\langle x,v\cdot y\rangle = \langle v\cdot x,y\rangle$ for any $x,y \in \bigwedge V$ and any $v \in V$: in the Clifford algebra, this property extends to any element $z \in \Cl(V,q)$ as $\langle x,zy\rangle = \langle z^*x,y\rangle$, where $z^*$ is as in \eqref{reverse in spin}.
\end{obs}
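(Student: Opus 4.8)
The statement collects three facts about the pairing $\langle\cdot,\cdot\rangle$ on $\bigwedge^\bullet V\simeq \Cl_q(V)$, and the plan is to prove them in the order in which they get used: first, that $\langle x,y\rangle:=\det\big(q(x_i,y_j)\big)$ (extended by declaring monomials of different degree orthogonal) is a well-defined symmetric non-degenerate bilinear form on $\bigwedge^\bullet V$, hence also on $\Cl_q(V)$ via the vector-space identification; then the adjointness $\langle v\wedge x,y\rangle=\langle x,v\neg y\rangle$ between exterior product and contraction; and finally the Clifford adjointness $\langle x,zy\rangle=\langle z^\ast x,y\rangle$ for every $z\in\Cl_q(V)$, of which the identity $\langle x,v\cdot y\rangle=\langle v\cdot x,y\rangle$ for $v\in V$ is the special case $z=v$.

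For well-definedness I would use that $(x_1,\dots,x_k)\mapsto\det\big(q(x_i,y_j)\big)$ is multilinear and alternating in the rows, so it factors through $\bigwedge^k V$, and symmetrically in the $y_j$'s through the columns; symmetry of the pairing follows from $q$ being symmetric together with $\det(A)=\det(A^{\mathsf T})$. Non-degeneracy is clearest after fixing a $q$-orthonormal basis $(u_1,\dots,u_{2N})$ of $V$ (available over $\mathbb C$): the exterior monomials $u_I:=u_{i_1}\wedge\cdots\wedge u_{i_k}$ then form an orthonormal basis of $\bigwedge^\bullet V$, since $\det\big(q(u_i,u_j)\big)_{i\in I,\,j\in J}$ equals $1$ if $I=J$ and $0$ otherwise. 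For the adjointness, with $y=y_1\wedge\cdots\wedge y_h$ and $x=x_1\wedge\cdots\wedge x_{h-1}$, the scalar $\langle v\wedge x,y\rangle$ is the determinant of the $h\times h$ matrix whose first row is $\big(q(v,y_1),\dots,q(v,y_h)\big)$ and whose remaining $h-1$ rows are $\big(q(x_i,y_1),\dots,q(x_i,y_h)\big)$; Laplace-expanding along the first row gives $\sum_{\ell}(-1)^{\ell+1}q(v,y_\ell)\,\langle x,\,y_1\wedge\cdots\wedge\widehat{y_\ell}\wedge\cdots\wedge y_h\rangle$, and the cofactor signs $(-1)^{\ell+1}$ are exactly the ones in the definition of $v\neg(-)$ recalled after \eqref{multiplication}, so this equals $\langle x,v\neg y\rangle$.

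For the Clifford adjointness, the point is that under the identification $\bigwedge^\bullet V\simeq\Cl_q(V)$ the left multiplication operator $L_v\colon x\mapsto v\cdot x$ decomposes as a sum of a creation operator (exterior product by $v$) and an annihilation operator (the contraction $v\neg(-)$), which is the content of \eqref{multiplication}; by the previous step these two operators are mutually adjoint for $\langle\cdot,\cdot\rangle$, so their sum $L_v$ is self-adjoint, i.e.\ $\langle x,v\cdot y\rangle=\langle v\cdot x,y\rangle$, which since $v^\ast=v$ is the case $z=v$. For arbitrary $z$, write it as a linear combination of products $v_1\cdots v_k$ of vectors; then $L_z=L_{v_1}\circ\cdots\circ L_{v_k}$ on each summand, and since the adjoint of a composition is the reversed composition of the adjoints, the adjoint of $L_z$ is $L_{v_k}\circ\cdots\circ L_{v_1}=L_{v_k\cdots v_1}=L_{z^\ast}$; extending by linearity in $z$ gives $\langle x,zy\rangle=\langle z^\ast x,y\rangle$ in general.

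I expect the only real friction to be conceptual rather than computational: one must pin down the identification $\bigwedge^\bullet V\simeq\Cl_q(V)$ (the \emph{symbol} normalization) so that left Clifford multiplication really is ``exterior product plus contraction'', since this is precisely what makes $L_v$ self-adjoint for the determinant pairing, and so that \eqref{multiplication} is read with this meaning. A route that bypasses the operator bookkeeping entirely is to first establish $\langle a,b\rangle=(a^\ast b)_0$, the scalar component of $a^\ast b$ in $\Cl_q(V)$ — a short check on a $q$-orthonormal basis using $u_iu_j=-u_ju_i$ for $i\neq j$ and $u_i^2=q(u_i)$ — after which $\langle z^\ast x,y\rangle=\big((z^\ast x)^\ast y\big)_0=(x^\ast z y)_0=\langle x,zy\rangle$ is immediate from $(z^\ast)^\ast=z$ and $(ab)^\ast=b^\ast a^\ast$, the remaining work being to reconcile this description with the $\det$-definition.
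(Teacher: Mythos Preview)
Your proposal is correct and follows the same line the paper indicates inline in the remark: the paper simply asserts the $\wedge$/$\neg$ adjointness as ``well-known'' and says that, combined with \eqref{multiplication}, it yields $\langle x,v\cdot y\rangle=\langle v\cdot x,y\rangle$, which then extends to arbitrary $z$ via the reverse involution---you supply exactly these details (Laplace expansion for the adjointness, mutual adjointness of creation/annihilation making $L_v$ self-adjoint, and the composition/anti-homomorphism argument for general $z$). Your alternative route through $\langle a,b\rangle=(a^\ast b)_0$ is a clean extra not present in the paper; one small caution is that \eqref{multiplication} is written in the hyperbolic form $e\wedge+f\neg$ rather than the full symbol-map form $v\wedge+v\neg$ you invoke, but the latter is the standard identity you actually need and your argument is unaffected.
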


\indent The function $\psi_{f,a}(v)=\langle f,v\cdot a\rangle$ is linear in $v\in V$ and vanishes on $H_a$, thus it belongs to $H_a^\perp=\{\psi: V \rightarrow\mathbb C \ | \ \psi(H_a)=0 \}$. But $H_a=H_a^\perp$ (since $a$ is a pure spinor), thus $\psi_{f,\bullet}\in \Hom(\langle a \rangle_\mathbb C, H_a)$. It follows that we have two maps
\[ \begin{matrix} 
& \bigwedge^{od}E^\vee & \longrightarrow & H^0\left(\mathbb S_{N}^+,\cal U(1)\right)\\
j: & f & \mapsto & j_f\\
\Psi: & f & \mapsto & \psi_{f,\bullet}
\end{matrix}\]
where $j$ is the $\Spin(V)$-equivariant isomorphism given by Borel--Weil Theorem. Next remark shows that the maps $j$ and $\Psi$ coincide up to scalars.

\begin{obs}
	The map $\Psi$ is $\Spin(V)$-equivariant too. Indeed, for any $G$-homogeneous bundle on a variety $G/P$, the $G$-action on a global section $s$ is given by $(g \cdot s)(a)=s(g^{-1}\cdot a)$. Since for any $x\in \Spin(V)$ it holds $x^{-1}=x^*$, one gets $\left(x \cdot \psi_{f,\bullet}\right)(a)=\psi_{f,\bullet}(x^*a)=\psi_{f, x^*a}\in \Hom\left(\langle x^*a\rangle, H_{x^*a}\right)$ where $\psi_{f,x^*a}(\lambda x^*a)\in H_{x*a}^\perp=H_{x^*a}$ is the functional on $V$ given by $\psi_{f,x^*a}(\lambda x^*a)(v)=\langle f, v \cdot \lambda x^*a\rangle$. But $x^*\in \Spin(V)$ acts by conjugacy on $V$, thus $V=x^*Vx$ and by adjointness one concludes $\langle f, v\cdot \lambda x^* a\rangle = \langle f, (x^*wx)\cdot \lambda x^*a\rangle = \langle f, x^* w \cdot \lambda a\rangle \stackrel{adj.}{=} \langle x\cdot f, w \cdot \lambda a \rangle$, that is $x \cdot \psi_{f,\bullet}=\psi_{x\cdot f,\bullet}$.
\end{obs}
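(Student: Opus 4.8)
My plan is to prove that the map $\Psi\colon \bigwedge^{od}E^\vee\to H^0(\mathbb S_N^+,\cal U(1))$, $f\mapsto\psi_{f,\bullet}$, is $\Spin(V)$-equivariant, and to deduce from this that it coincides with the Borel--Weil isomorphism $j$ up to a scalar. Indeed, once equivariance is known, Schur's lemma applied to the equivariant endomorphism $\Psi\circ j^{-1}$ of the irreducible $\Spin(V)$-module $H^0(\mathbb S_N^+,\cal U(1))\simeq(V_{\omega_{N-1}}^{D_N})^\vee$ forces $\Psi=c\cdot j$ for some $c\in\mathbb C$, and a single non-vanishing check then gives $c\neq0$. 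So I would organize the argument in three steps: (i) unwind the $\Spin(V)$-action on global sections of $\cal U(1)$; (ii) run the equivariance computation; (iii) verify $\Psi\neq0$.

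For (i) and (ii): recall that for a $G$-homogeneous bundle on $G/P$ the action on sections reads $(x\cdot s)(a)=s(x^{-1}a)$, transported through the fiber isomorphism induced by $x$; for $\cal U(1)$, whose fiber over $[a]\in\mathbb S_N^+$ is $\Hom(\langle a\rangle_\mathbb C,H_a)$ with $H_a=\Ker\psi_a$, I would build this isomorphism from the spin action on the tautological line $\langle a\rangle_\mathbb C$ and the conjugation action $\conj_x$ on $H_a\subset V$, noting that $v\cdot a=0$ implies $(xvx^{-1})\cdot(xa)=x\,(v\cdot a)=0$, so that $\conj_x(H_a)=H_{xa}$. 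Using $x^{-1}=x^*$ from \eqref{reverse in spin}, the value of $x\cdot\psi_{f,\bullet}$ at $[a]$ becomes the functional $w\mapsto\langle f,(x^*wx)\cdot x^*a\rangle$, after identifying functionals on $V$ with vectors of $V$ via $q$ and writing a general $v\in V$ as $v=x^*wx$ (legitimate since $V=x^*Vx$). Then $(x^*wx)\cdot x^*a=x^*w(xx^*)a=x^*\cdot(w\cdot a)$ because $xx^*=N(x)=1$, and the Clifford adjointness $\langle f,z\cdot y\rangle=\langle z^*\cdot f,y\rangle$ of Remark \ref{rmk on clifford scalar product} together with $(x^*)^*=x$ gives $\langle f,x^*\cdot(w\cdot a)\rangle=\langle x\cdot f,w\cdot a\rangle=\psi_{x\cdot f,a}(w)$, i.e. $x\cdot\psi_{f,\bullet}=\psi_{x\cdot f,\bullet}$.

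For (iii) I would evaluate at the pure spinor $[\bold{e}_{[N]}]$, for which $H_{\bold{e}_{[N]}}=E$: with $f=f_N\wedge\cdots\wedge f_2\in\bigwedge^{od}E^\vee$ and $v=f_1$, formula \eqref{multiplication} gives $f_1\cdot\bold{e}_{[N]}=f_1\neg\bold{e}_{[N]}=q(f_1,e_1)\,e_2\wedge\cdots\wedge e_N$, hence $\psi_{f,\bold{e}_{[N]}}(f_1)=\langle f,\,f_1\neg\bold{e}_{[N]}\rangle\neq0$ (the pairing of $f_N\wedge\cdots\wedge f_2$ with $e_2\wedge\cdots\wedge e_N$ is a nonzero anti-diagonal determinant). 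Thus $c\neq0$, so $j$ and $\Psi$ coincide up to a nonzero scalar; in particular a section $j_f$ vanishes at $[a]\in\mathbb S_N^+$ exactly when $\langle f,v\cdot a\rangle=0$ for every $v\in V$, i.e. when $f\perp\im\psi_a$, which is the description of the zero locus used in the rest of the paper. The main obstacle is bookkeeping rather than a genuine difficulty: one must keep the spin action on $\langle a\rangle_\mathbb C$ and the conjugation action on $H_a$ separate, and fix the convention for the action on sections carefully, so that the fiberwise proportionality produced by Schur's lemma becomes an honest equality of global morphisms; the non-vanishing check in (iii) is precisely what guarantees the Schur scalar is nonzero.
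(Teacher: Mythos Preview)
Your equivariance computation in (i)--(ii) is essentially identical to the paper's: same use of $(x\cdot s)(a)=s(x^{-1}a)$, of $x^{-1}=x^*$, of the substitution $v=x^*wx$ via the conjugacy action on $V$, and of the Clifford adjointness from Remark~\ref{rmk on clifford scalar product} to reach $x\cdot\psi_{f,\bullet}=\psi_{x\cdot f,\bullet}$. You are slightly more explicit than the paper about the fiber isomorphism $\conj_x\colon H_a\to H_{xa}$ and about the step $xx^*=N(x)=1$, which is only to the good.

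One remark on scope: the statement you were asked to prove is only the equivariance of $\Psi$; your step~(iii) (the non-vanishing check ensuring the Schur scalar $c\neq0$) and the deduction of the zero locus of $j_f$ belong to the subsequent Lemma~\ref{lemma:vanishing of global sections} in the paper. There the paper simply asserts ``by Schur's lemma, the equivariant maps $j$ and $\Psi$ coincide up to scalar'' without an explicit non-vanishing verification, so your concrete evaluation at $[\bold{e}_{[N]}]$ is a welcome addition that closes a small gap.
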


\begin{lemma}\label{lemma:vanishing of global sections}
	In the previous notation, the zero locus $\cal Z(j_f)$ of a global section $j_f\in H^0(\mathbb S_{N}^+,\cal U(1))$ corresponding to a spinor $f \in \bigwedge^{od}E^\vee$ is
	\begin{equation}\label{vanishing condition for sections}
		\cal Z(j_f)=\mathbb S_{N}^+ \cap (V\cdot f)^\perp \ .
	\end{equation}
\end{lemma}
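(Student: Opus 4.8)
The plan is to unwind both definitions of a section of $\cal U(1)$ and show they produce the same zero locus. Concretely, for a pure spinor $[a] \in \mathbb S_N^+$, I identify the fiber $(\cal U(1))_{[a]} \simeq \Hom(\langle a\rangle_{\mathbb C}, H_a)$ as in the discussion preceding the statement, and I work with the map $\Psi\colon f \mapsto \psi_{f,\bullet}$, which by the preceding remark is $\Spin(V)$-equivariant and hence (by Borel--Weil, Schur, and the fact that both land in the same irreducible homogeneous bundle) agrees with $j$ up to a nonzero scalar. Since a nonzero scalar does not change the zero locus, it suffices to compute $\cal Z(\Psi(f)) = \{[a] \in \mathbb S_N^+ : \psi_{f,a} = 0\}$.

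Next I would observe that, by definition, $\psi_{f,a}$ is the linear functional $v \mapsto \langle f, v\cdot a\rangle$ on $V$, where $\langle\cdot,\cdot\rangle$ is the Clifford scalar product from Remark \ref{rmk on clifford scalar product}. By the adjointness property $\langle f, v\cdot a\rangle = \langle v\cdot f, a\rangle$ (using $v^* = v$ for $v \in V$), this functional vanishes identically in $v$ if and only if $a \perp v\cdot f$ for every $v \in V$, i.e.\ if and only if $a \in (V\cdot f)^\perp$, where $V\cdot f = \{v\cdot f : v \in V\} \subseteq \bigwedge^{ev}E$ is the image of the Clifford action of $V$ on $f$ (note $v\cdot f$ shifts parity, so $v\cdot f \in \bigwedge^{ev}E$, matching the ambient space of $a$). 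Intersecting with $\mathbb S_N^+$ gives $\cal Z(j_f) = \mathbb S_N^+ \cap (V\cdot f)^\perp$, which is exactly \eqref{vanishing condition for sections}.

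The one point requiring a little care is the bookkeeping of where things live: I must check that the scalar product being used to form $(V\cdot f)^\perp$ is the restriction of the Clifford scalar product to $\bigwedge^{ev}E$ (equivalently, the pairing realizing the self-duality of the half-spin representation for $N$ even), and that the adjointness identity $\langle x, v\cdot y\rangle = \langle v\cdot x, y\rangle$ from Remark \ref{rmk on clifford scalar product} applies with $x = f \in \bigwedge^{od}E^\vee$ and $y = a \in \bigwedge^{ev}E$ under the identification of half-spin representations with subspaces of $\bigwedge^\bullet E$. I also need the already-recorded fact that $\cal U_{[a]} = H_a$ and $H_a = H_a^\perp$ for a pure spinor, which is what guarantees $\psi_{f,\bullet}$ genuinely defines a homomorphism into $\cal U_{[a]}$ rather than merely into $V$; this is the content of the paragraph just before the statement and can be invoked directly. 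I expect the main (mild) obstacle to be phrasing the identification $\Psi \sim j$ cleanly enough that "up to scalar" is rigorously justified rather than asserted — but this follows formally from Schur's lemma once equivariance of $\Psi$ is in hand, which the preceding remark supplies.
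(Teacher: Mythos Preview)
Your proposal is correct and follows essentially the same approach as the paper: identify $j$ with $\Psi$ up to a nonzero scalar via Schur's lemma (using the equivariance established in the preceding remark), then read off the vanishing condition $\langle f, v\cdot a\rangle = 0$ for all $v\in V$ and rewrite it via adjointness as $a\in (V\cdot f)^\perp$. The paper's proof is exactly this two-line argument, and your additional bookkeeping remarks (about where the pairing lives and why $\psi_{f,a}$ lands in $H_a$) are accurate and already handled in the discussion preceding the lemma.
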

\begin{proof}
	By Schur's lemma, the equivariant maps $j$ and $\Psi$ coincide up to scalar, hence the section $j_f\in H^0(\mathbb S_{N}^+,\cal U(1))$ is such that $j_f([a])(\lambda a)= \langle f,\bullet\cdot (\lambda a)\rangle \in H_a$ for any $\lambda \in \mathbb C$. We conclude that the zero locus $\cal Z(j_f)$ of the global section $j_f$ is given by the pure spinors $[a] \in \mathbb S_{N}^+$ such that $0= \langle f, v \cdot a \rangle = \langle v\cdot f , a \rangle$ for any $v \in V$, that is the thesis.
\end{proof}

\section{The poset of $\Spin_{2N}$--orbits in $\sigma_2(\mathbb S_N^+)$}\label{sec:poset spinor}

\indent \indent We assume the setting in Remark \ref{rmk:setting}. The secant variety of lines $\sigma_2(\mathbb S^+_N)$ is invariant under $\Spin_{2N}$--action, as well as its subsets $\sigma_2^\circ(\mathbb S_N^+)$ and $\tau(\mathbb S_N^+)$. We refer to the orbits lying in $\sigma_2^\circ(\mathbb S_{N}^+)$ as {\em secant orbits}, and to the orbits lying in $\tau(\mathbb S_{N}^+)$ as {\em tangent orbits}. Recall that the Spinor variety has diameter $\diam(\mathbb S_{N}^+)=\frac{N}{2}$. Moreover, for any $l=1:\left(\frac{N}{2}-1\right)$ we consider the following pure spinors and their corresponding maximal fully isotropic subspaces
\begin{equation}\label{def:representatives}
\begin{matrix}
\bold{e}_{[N-2l]} \ = \ e_1\wedge \ldots \wedge e_{N-2l} \ \in \mathbb S_{N}^+ \ , \\
\\
E_{N-2l}:= H_{\bold{e}_{[N-2l]}}=\langle e_1,\ldots, e_{N-2l}, f_{N-2l+1},\ldots , f_N\rangle_\mathbb C \ \in \OG^+(N,V) \ .
\end{matrix}
\end{equation}
By convention, for $l=\frac{N}{2}$ we set $\bold{e}_{[0]}=\ell_{\omega_N}=\mathbbm{1}$. On the other hand, for $l=0$ one gets $\bold{e}_{[N]}=v_{\omega_N}$.

\paragraph*{$Spin_{2N}$--orbits in $\mathbb S_N^+\times \mathbb S_N^+$.} The action of $\Spin_{2N}$ on $\mathbb S_{N}^+$ naturally induces the action on the direct product $\mathbb S_{N}^+\times \mathbb S_{N}^+$ given by $g\cdot ([a],[b])=([g\cdot a],[g\cdot b])$. We recall that the notation $g\cdot a$ stands for the action of $\Spin^q(V)\subset (\Cl_q(V))^\times$ on $\bigwedge E$ via Clifford multiplication \eqref{multiplication}.

\begin{obs}\label{rmk:spin on OG}
	Via the bijection \eqref{correspondence pure spinors and MFI subspaces}, the action on $\mathbb S_{N}^+\times \mathbb S_{N}^+$ is equivalent to the action of $\Spin^q(V)$ on $\OG^+(N,V)^{\times 2}$ given by $g \cdot (H_a,H_b)=(g\cdot H_a,g\cdot H_b)$, where $g \cdot H_a=gH_ag^{-1}$ is the {\em conjugacy} action: this follows from the inclusion $\Spin^q(V)\subset N_{\Cl_q(V)^\times}(V)$.
\end{obs}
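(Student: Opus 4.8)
The plan is to verify the asserted intertwining directly inside the Clifford algebra $\Cl_q(V)$, reducing the whole statement to a one-line computation once the definitions are unwound.

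First I would make explicit what each side means. By the bijection \eqref{correspondence pure spinors and MFI subspaces}, a pure spinor $[a]\in\mathbb S_N^+$ goes to $H_a=\Ker(\psi_a)=\{v\in V : v\cdot a=0\}$, where $v\cdot a$ is the Clifford multiplication \eqref{multiplication}; crucially, the $\Spin^q(V)$-module structure on $\bigwedge^{ev}E$ is, by construction of the spin representation, the restriction of left multiplication in $\Cl_q(V)$ under $\Spin^q(V)\subset\Cl_q^+(V)^\times$, so ``$g\cdot a$'' is literally the Clifford product $g\,a$. Thus the statement reduces to the identity
\[ H_{g\cdot a} \;=\; g\,H_a\,g^{-1} \qquad\text{for all } g\in\Spin^q(V),\ [a]\in\mathbb S_N^+, \]
together with its coordinatewise extension to pairs: once this holds, applying \eqref{correspondence pure spinors and MFI subspaces} in each factor sends the diagonal action $g\cdot([a],[b])=([g\cdot a],[g\cdot b])$ to $(H_a,H_b)\mapsto(gH_ag^{-1},gH_bg^{-1})$, which is exactly the conjugation action.

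The key step is computing $H_{g\cdot a}$. Given $v\in V$, set $w:=g^{-1}vg$; here the inclusion $\Spin^q(V)\subset N_{\Cl_q(V)^\times}(V)$ --- immediate from the defining condition $xVx^{-1}=V$ of $\Spin(V,q)$ --- guarantees $w\in V$. By associativity in $\Cl_q(V)$ and $vg=gw$ one gets $v\cdot(g\cdot a)=(vg)\cdot a=(gw)\cdot a=g\cdot(w\cdot a)$; applying $g^{-1}$ on the left then yields $v\cdot(g\cdot a)=0\iff w\cdot a=0\iff w\in H_a\iff v\in gH_ag^{-1}$, whence $H_{g\cdot a}=gH_ag^{-1}$. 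As a consistency check one should note that $gH_ag^{-1}$ really lands in the connected component $\OG^+(N,V)$: this is automatic from the identity just proved, since $g\cdot a$ is again a pure spinor in $\bigwedge^{ev}E$ and \eqref{correspondence pure spinors and MFI subspaces} attaches to it an element of $\OG^+(N,V)$; alternatively, $\conj_g\in\SO(V,q)$ permutes the maximal isotropic subspaces and $\Spin_{2N}$ being connected forbids the orbit of $H_{v_{\omega_N}}$ from leaving its component.

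I do not expect a genuine obstacle here: this is essentially bookkeeping. The only points worth stating carefully are the two identifications used implicitly --- that the abstract half-spin action of $\Spin^q(V)$ on $\bigwedge^{ev}E$ is left Clifford multiplication (which is the definition), and that conjugation by an element of $\Spin_{2N}$ cannot swap the two components of $\OG(N,V)$ (handled by connectedness) --- both of which I would record as short remarks rather than lengthy arguments.
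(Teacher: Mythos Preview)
Your proposal is correct and is precisely an unpacking of the paper's one-line justification: the paper itself offers no separate proof beyond the phrase ``this follows from the inclusion $\Spin^q(V)\subset N_{\Cl_q(V)^\times}(V)$'', and your computation $v\cdot(g\cdot a)=g\cdot\big((g^{-1}vg)\cdot a\big)$ is exactly what that inclusion buys. Your additional remarks on the spin action being left Clifford multiplication and on preservation of the component $\OG^+(N,V)$ are welcome clarifications that the paper leaves implicit.
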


\indent From Remark \ref{rmk:hamming for G-varieties}, the Hamming distance in $\mathbb S_{N}^+$ is invariant under $\Spin^q(V)$-action, as well as the dimensions of subspaces in $V$ are preserved under conjugacy. In particular, the actions of $\Spin^q(V)$ on $\mathbb S_{N}^+\times \mathbb S_{N}^+$ and on $\OG^+(N,V)^{\times 2}$ restrict to actions on the subsets
\begin{equation}\label{orbit O_l} 
\cal O_{l,N} := \left\{ ([a],[b]) \in \mathbb S_{N}^+\times \mathbb S_{N}^+ \ | \ d([a],[b])=l\right\}
\end{equation}
\begin{equation}\label{orbit in OGxOG}
\widehat{\cal O}_{l,N} := \left\{ (H_a,H_b) \in \OG^+(N,V)^{\times 2} \ | \ \dim(H_a \cap H_b)=N-2l \right\}
\end{equation}
respectively, for any $l=0:\frac{N}{2}$ where $\frac{N}{2}=\diam(\mathbb S_N^+)$. Since for $l=0$ one gets the diagonals $\Delta_{\mathbb S_N^+}$ and $\Delta_{\OG^+(N,V)}$ we consider $l\geq 1$. From Proposition \ref{prop:hamming distance}, for any distance $l\geq 1$, the subsets $\cal O_{l,N}$ and $\widehat{\cal O}_{l,N}$ are equivalent one to the other, and they give partitions of the corresponding direct products (as sets). Since $\dim (H_{\bold{e}_{[N]}}\cap H_{\mathbbm{e}_l})=\dim (E\cap E_l)=N-2l$, for any $l=1:\frac{N}{2}$ one gets the inclusion $\Spin^q(V)\cdot \left([\bold{e}_{[N]}],[\bold{e}_{[N-2l]}]\right) \subset \cal O_{l,N}$.

\begin{obs*}
	It is likely well-known that the $\Spin_{2N}$--orbit partition of $\Q^{2N-2}\times \Q^{2N-2}$ is uniquely determined by the Hamming distance: in particular, given $\Delta_{\Q^{2N-2}}$ the diagonal, it holds
	\begin{equation}\label{partition QxQ}
	\Q^{2N-2}\times \Q^{2N-2} \ = \ \Delta_{\Q^{2N-2}} \ \sqcup \ \Spin_{2N}\cdot ([e_1],[e_2]) \ \sqcup \ \Spin_{2N}\cdot ([e_1],[f_1]) \ . 
	\end{equation}
\end{obs*}

\begin{prop}\label{prop:O_l is orbit}
	For any $l=1:\diam(\mathbb S_{N}^+)$, the spin group $\Spin^q(V)$ acts transitively on $\cal O_{l,N}\subset \mathbb S_{N}^+\times \mathbb S_{N}^+$. In particular, it holds $\cal O_{l,N}= \Spin^q(V)\cdot ([\bold{e}_{[N]}],[\bold{e}_{[N-2l]}])$.
\end{prop}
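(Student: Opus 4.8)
The plan is to show transitivity of the $\Spin^q(V)$-action on $\mathcal O_{l,N}$ by reducing to the equivalent action on $\widehat{\mathcal O}_{l,N}\subset\OG^+(N,V)^{\times 2}$ via the bijection \eqref{correspondence pure spinors and MFI subspaces} and Remark \ref{rmk:spin on OG}, and then exhibiting, for an arbitrary pair $(H_a,H_b)\in\widehat{\mathcal O}_{l,N}$, an element of $\Spin^q(V)$ carrying it to the distinguished pair $(E, E_{N-2l})$. Since $\Spin^q(V)\to\SO(V,q)$ is surjective and $\OG^+(N,V)$ together with the incidence data we care about only depends on the $\SO(V,q)$-action (the kernel $\{\pm1\}$ acts trivially on $\OG^+(N,V)$), it suffices to produce such an element in $\SO(V,q)$ and then lift it — the lift automatically stays in the correct connected component since $H_a,H_b,E,E_{N-2l}$ all lie in $\OG^+(N,V)$, so the ambiguity $\pm1$ does not matter. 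Hence the whole argument is really a statement in linear algebra about pairs of maximal isotropic subspaces of a split quadratic space of Witt index $N$.

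First I would fix $(H_a,H_b)$ with $\dim(H_a\cap H_b)=N-2l$ and set $W:=H_a\cap H_b$. I would choose a complement: write $H_a=W\oplus A$ and $H_b=W\oplus B$ with $\dim A=\dim B=2l$. The key structural fact is that the restriction of $q$ to $W^\perp$ descends to a nondegenerate quadratic form on $W^\perp/W$, which is a split space of dimension $4l$ and Witt index $2l$; inside it, the images $\bar A$ and $\bar B$ of $A$ and $B$ are maximal isotropic subspaces meeting trivially, so $\bar A\oplus\bar B=W^\perp/W$ and the pairing $q$ identifies $\bar B\simeq \bar A^\vee$. This is precisely the hyperbolic situation: pick any basis of $\bar A$, take the $q$-dual basis of $\bar B$, lift everything back, and also pick a hyperbolic partner space $W'$ for $W$ (so that $W\oplus W'$ is a hyperbolic $2(N-2l)$-space with $V=(W\oplus W')\perp (W^\perp/W)$-lift). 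In these coordinates $H_a$ and $H_b$ are visibly in the standard relative position of $E$ and $E_{N-2l}$ — namely $E=\langle e_1,\dots,e_N\rangle$ and $E_{N-2l}=\langle e_1,\dots,e_{N-2l},f_{N-2l+1},\dots,f_N\rangle$ also have $\dim(E\cap E_{N-2l})=N-2l$ with the same hyperbolic-complement structure — so Witt's extension theorem (or an explicit change of hyperbolic basis) gives $g\in\SO(V,q)$ with $g\cdot H_a=E$ and $g\cdot H_b=E_{N-2l}$.

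One point needing care is the parity/connected-component bookkeeping: Witt's theorem gives $g\in\Orth(V,q)$, and I must check $\det g=+1$ (or adjust by composing with a reflection that fixes both subspaces, possible because $\dim H_a\cap H_b=N-2l\geq 1$ in all cases except $l=N/2$, where $H_a\cap H_b=0$ but $\dim H_a=N\geq 1$ still leaves room). In fact the cleaner route is to avoid $\det$ altogether: both $(H_a,H_b)$ and $(E,E_{N-2l})$ lie in $\OG^+(N,V)^{\times2}$, and one knows a priori that $\OG^+(N,V)=\Spin_{2N}\cdot[E]$ is a single orbit; transitivity on the subset of \emph{pairs} at a fixed intersection dimension then follows once one checks that the stabilizer of $E$ in $\Spin_{2N}$ already acts transitively on $\{H\in\OG^+(N,V)\mid \dim(E\cap H)=N-2l\}$, which is again the hyperbolic-partner computation above restricted to the parabolic $P_{\omega_N}$. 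I expect the main obstacle to be purely organizational — keeping the three mutually orthogonal pieces ($W$, its hyperbolic partner, and the $4l$-dimensional split middle block) consistently coordinatized so that the target pair $(E,E_{N-2l})$ is literally in that normal form — rather than any genuine difficulty; there is no hard inequality or nonvanishing to establish, only Witt-type rigidity of isotropic configurations in a split quadratic space.
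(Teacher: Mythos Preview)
Your proposal is correct and reaches the result by a genuinely different route from the paper. The paper argues explicitly and iteratively: after moving $[a]$ to $[\bold e_{[N]}]$ by homogeneity, it invokes the $\Spin_{2N}$--orbit decomposition of $\Q^{2N-2}\times\Q^{2N-2}$ to conjugate each generator $g_j=f_j+\sum\alpha_{kj}e_k$ of $H_b$ to $f_j$ one at a time while fixing the already--normalized $e_i$'s, descending to smaller and smaller orthogonal complements. Your approach is the structural one: decompose $V$ around $W=H_a\cap H_b$, observe that the images $\bar A,\bar B$ of $H_a,H_b$ in $W^\perp/W$ are complementary maximal isotropics of a split $4l$--space, and appeal to Witt rigidity (equivalently, to transitivity of the parabolic $P_{\omega_N}$ on the set of $H\in\OG^+(N,V)$ with $\dim(E\cap H)=N-2l$). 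Your argument is shorter and closer to the standard linear--algebra toolkit; the paper's argument is more self-contained within the Clifford/spin language already set up, and ties in with the quadric orbit statement \eqref{partition QxQ} used elsewhere.

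One small correction: your proposed determinant fix---``compose with a reflection that fixes both subspaces''---does not work as stated, because no hyperplane reflection $r_v$ (with $v$ anisotropic) can preserve a maximal isotropic subspace $H$: one would need $v\in H^\perp=H$, forcing $v$ isotropic. Fortunately the issue evaporates for a cleaner reason you almost state: any $g\in\Orth(V,q)$ with $g(H_a)=E$ and $H_a,E\in\OG^+(N,V)$ automatically has $\det g=+1$, since elements of $\Orth(V,q)\setminus\SO(V,q)$ exchange the two components $\OG^\pm(N,V)$. So once Witt hands you $g\in\Orth(V,q)$, the parity hypothesis on $H_a$ already forces $g\in\SO(V,q)$, and you can lift to $\Spin^q(V)$ without further ado. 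Your ``cleaner route'' via the parabolic is also fine and makes the same point implicitly.
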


\begin{proof}
	Given $([a],[b])\in \cal O_{l,N}$, by homogeneity of $\mathbb S_N^+$ we may assume $a=\bold{e}_{[N]}$.\\
	\indent First we prove the result for $l=\frac{N}{2}$. Since $d([\bold{e}_{[N]}],[b])=\frac{N}{2}$, it holds $E \cap H_{b}=\{0\}$ and we may assume $H_b=\langle g_1,\ldots , g_N\rangle_\mathbb C$ for generators $g_j$ as in \eqref{generators for H_a}. In light of \eqref{partition QxQ} $\Spin^q(V)$ conjugates the fully isotropic vector $g_1=f_1+\sum_{i=2}^N\alpha_{i1}e_i$ (having Hamming distance $2$ from $e_{1}$) to $f_1$ by leaving $e_1$ fixed. Now, consider $V'=\langle e_2,\ldots, e_N,f_2,\ldots, f_N\rangle_\mathbb C$ and the subspaces $E'=E\cap V'$ and $H_b'=H_b/\langle e_1,f_1\rangle_\mathbb C$: again $\Spin(V')$ conjugates $g_2-\alpha_{12}e_1 \in H_b'$ to $f_2$ by leaving $e_2$ fixed. In particular, $\Spin^q(V)$ conjugates $g_2\in H_b$ to $f_2$ by leaving $e_1,f_1,e_2$ fixed. By iterating, $\Spin^q(V)$ conjugates $H_b$ to $E^\vee$, hence $([\bold{e}_{[N]}],[b])$ to $(\bold{e}_{[N]},[\mathbbm{1}])$.\\
	\indent On the other hand, for $l\lneq \frac{N}{2}$ one has $H_b=\langle h_1,\ldots, h_{N-2l}, g_{N-2l+1},\ldots, g_N\rangle_\mathbb C$ for $g_j$'s as in \eqref{generators for H_a} and $E\cap H_b=\langle h_1,\ldots, h_{N-2l}\rangle_\mathbb C$. Up to reordering $h_1,\ldots ,h_{N-2l}$, one gets that $h_j$ has Hamming distance $1$ from $e_j$ for any $j=1:N-2l$, hence from \eqref{partition QxQ} and similar arguments as above one can conjugates $E\cap H_b$ to $E_{N-2l}$. Finally, by working in $W:=V/E_{N-2l}$ one can conjugates $E/E_{N-2l}$ to $H_b/E_{N-2l}$ via $\Spin(W)$ and lifting this to a conjugation under $\Spin^q(V)$ leaving $E_{N-2l}$ fixed. The thesis follows.
\end{proof}

\begin{cor}\label{cor:orbit partition of SxS}
	The product $\mathbb S_{N}^+\times \mathbb S_{N}^+$ splits in the $\Spin_{2N}$--orbits
	\[ \mathbb S_N^+ \times \mathbb S_N^+ = \Delta_{\mathbb S_N^+} \sqcup  \bigsqcup_{l=1}^{ \frac{N}{2} } \Spin_{2N}\cdot \left( [\bold{e}_{[N]}],[\bold{e}_{[N-2l]}]\right) \ . \]
\end{cor}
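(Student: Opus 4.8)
The plan is to deduce the statement directly from Proposition~\ref{prop:O_l is orbit}, together with the fact that the Hamming distance on $\mathbb S_N^+$ is a $\Spin_{2N}$-invariant taking only finitely many values. The corollary is essentially a bookkeeping step that assembles pieces already in place; the substantive geometric work has been carried out in establishing transitivity on each $\cal O_{l,N}$.

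First I would observe that every pair $([a],[b])\in\mathbb S_N^+\times\mathbb S_N^+$ has a well-defined Hamming distance $d([a],[b])$ lying in $\{0,1,\ldots,\diam(\mathbb S_N^+)\}$, and that $\diam(\mathbb S_N^+)=\tfrac N2$ by Proposition~\ref{prop:hamming distance} and the remark following it. Hence the level sets $\cal O_{l,N}$ of $d$ (as in \eqref{orbit O_l}), for $l=0:\tfrac N2$, partition the product as a set:
\[ \mathbb S_N^+\times\mathbb S_N^+ \ = \ \bigsqcup_{l=0}^{N/2}\cal O_{l,N} \ , \qquad \cal O_{0,N}=\Delta_{\mathbb S_N^+} \ . \]
By Remark~\ref{rmk:hamming for G-varieties} applied to the $\Spin_{2N}$-invariant variety $\mathbb S_N^+\subset\mathbb P(\bigwedge^{ev}E)$, the group action preserves $d$, so each $\cal O_{l,N}$ is $\Spin_{2N}$-invariant and is therefore a union of orbits.

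Next I would invoke Proposition~\ref{prop:O_l is orbit}: for each $l=1:\tfrac N2$ the spin group acts transitively on $\cal O_{l,N}$, with $\cal O_{l,N}=\Spin_{2N}\cdot\big([\bold{e}_{[N]}],[\bold{e}_{[N-2l]}]\big)$. For the remaining level $l=0$, transitivity on $\Delta_{\mathbb S_N^+}$ is immediate from homogeneity of $\mathbb S_N^+$: the diagonal embedding $[a]\mapsto([a],[a])$ is $\Spin_{2N}$-equivariant and $\Spin_{2N}$ acts transitively on $\mathbb S_N^+$, so $\Delta_{\mathbb S_N^+}$ is the single orbit of $([\bold{e}_{[N]}],[\bold{e}_{[N]}])$. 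Substituting these orbit descriptions into the set-theoretic partition above yields precisely the claimed decomposition.

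I do not expect a genuine obstacle here: the only point requiring care is checking that the partition by Hamming distance is exhaustive and that the index range $l=0:\tfrac N2$ is the correct one, which is guaranteed by the diameter computation recalled after Proposition~\ref{prop:hamming distance}. All the nontrivial content — the orbit-by-orbit transitivity, proved via the reduction to the conjugacy action on $\OG^+(N,V)^{\times 2}$ and the normal form \eqref{generators for H_a} — is already supplied by Proposition~\ref{prop:O_l is orbit}.
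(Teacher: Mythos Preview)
Your proposal is correct and follows exactly the approach the paper intends: the corollary is stated without proof as an immediate consequence of Proposition~\ref{prop:O_l is orbit}, and you have simply spelled out the obvious assembly (Hamming distance gives a $\Spin_{2N}$-invariant partition of $\mathbb S_N^+\times\mathbb S_N^+$ into the sets $\cal O_{l,N}$, each of which is a single orbit by Proposition~\ref{prop:O_l is orbit}, with the diagonal handled by homogeneity).
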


\subsection{Secant orbits in $\sigma_2^\circ(\mathbb S_{N}^+)$}\label{subsec:secant orbits}

\indent \indent We deduce the orbit partition of the dense subset $\sigma_2^\circ(\mathbb S_{N}^+)$ from Proposition \ref{prop:O_l is orbit}. We recall that each spinor $[a+b]\in \mathbb P\left(\bigwedge^{ev}E\right)$ defines, via the map \eqref{psi_a}, a fully isotropic subspace $H_{a+b}=\Ker(\psi_{a+b})$, which has maximal dimension $N$ if and only if $[a+b]$ is pure. By definition of these subspaces as annihilators, it clearly holds $H_a \cap H_b \subseteq H_{a+b}$.\\
\indent For any two distinct pure spinors $[a],[b]\in \mathbb S_{N}^+$ such that $d([a],[b])=1$, the spinor $[a+b]$ is pure too, since the line $L([a],[b])$ fully lies in the Spinor variety: in particular, in this case one has $\dim(H_a\cap H_b)\stackrel{\eqref{prop:hamming distance}}{=}N-2$ while $\dim H_{a+b}=N$, thus the strict inclusion $H_a\cap H_b \subsetneq H_{a+b}$ holds. However, for higher Hamming distances the equality holds. 
\begin{lemma}\label{lemma:dim H_{w+e_l}}
	In the above notation, it holds $H_{\bold{e}_{[N]} + \bold{e}_{[N-2l]}}=E \cap E_{N-2l}$ if and only if $l\neq 1$. In particular, for any $l\geq 2$, it holds $\dim H_{\bold{e}_{[N]} + \bold{e}_{[N-2l]}}=N-2l$.
\end{lemma}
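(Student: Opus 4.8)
\emph{Proof idea.} The statement is proved by a direct computation of $\Ker\psi_{\bold{e}_{[N]}+\bold{e}_{[N-2l]}}$, exploiting the $\mathbb Z$-grading of $\bigwedge^\bullet E$. Set $a:=\bold{e}_{[N]}$, $b:=\bold{e}_{[N-2l]}$, $m:=N-2l$ (even, since $N$ is), and let $v=\sum_{i=1}^N\lambda_ie_i+\sum_{j=1}^N\mu_jf_j\in V$ be arbitrary. Using the Clifford multiplication \eqref{multiplication} together with the obvious vanishings $e_i\wedge a=0$ for all $i$, $e_i\wedge b=0$ for $i\le m$, and $f_j\neg b=0$ for $j>m$, one first records
\begin{equation*}
v\cdot(a+b)\;=\;\sum_{j=1}^N\mu_j\,(f_j\neg a)\;+\;\sum_{i=m+1}^N\lambda_i\,(e_i\wedge b)\;+\;\sum_{j=1}^{m}\mu_j\,(f_j\neg b),
\end{equation*}
where the three sums lie in $\bigwedge^{N-1}E$, $\bigwedge^{m+1}E$ and $\bigwedge^{m-1}E$ respectively, and each of the families $\{f_j\neg a\}_{j}$, $\{e_i\wedge b\}_{i>m}$, $\{f_j\neg b\}_{j\le m}$ is, up to signs, a sub-collection of the monomial basis of $\bigwedge^\bullet E$, hence linearly independent.

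Next one splits into cases by comparing the three degrees $N-1$, $m+1$, $m-1$. If $l\ge 2$ then $m-1<m+1<N-1$ are pairwise distinct, so $v\cdot(a+b)=0$ forces each of the three sums to vanish separately; by the above independence this yields $\mu_j=0$ for every $j$ and $\lambda_i=0$ for every $i>m$, hence $H_{a+b}=\langle e_1,\dots,e_m\rangle_{\mathbb C}=E\cap E_{N-2l}$, of dimension $N-2l$. If instead $l=1$, then $m+1=N-1$ while $m-1=N-3$ remains apart: the component in $\bigwedge^{N-3}E$ still forces $\mu_1=\cdots=\mu_{N-2}=0$, but the component in $\bigwedge^{N-1}E$ now couples $f_{N-1}\neg a$, $f_N\neg a$, $e_{N-1}\wedge b$, $e_N\wedge b$. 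Rewriting these four vectors in terms of the two independent basis vectors $b\wedge e_{N-1}$ and $b\wedge e_N$ (this is where the parity of $N$ enters, to fix the signs), the vanishing condition collapses to the two linear relations $\mu_{N-1}=-\lambda_N$ and $\mu_N=\lambda_{N-1}$. Hence $\dim H_{\bold{e}_{[N]}+\bold{e}_{[N-2]}}=N>N-2$, so $H_{a+b}\supsetneq E\cap E_{N-2}$; in particular $[a+b]$ is then a pure spinor, consistently with $d([a],[b])=1$. Combining the two cases gives both the equivalence and the dimension formula.

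The computation is entirely routine; the only point demanding a little care is the sign bookkeeping in the contractions and wedge re-orderings for $l=1$, which I expect to be the main (and rather mild) obstacle.
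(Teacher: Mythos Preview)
Your proof is correct and follows essentially the same approach as the paper's: a direct computation of $v\cdot(\bold{e}_{[N]}+\bold{e}_{[N-2l]})$ followed by separating terms via the $\mathbb Z$--grading of $\bigwedge^\bullet E$. The only cosmetic difference is that the paper first uses the isotropy of $H_{\bold{e}_{[N]}+\bold{e}_{[N-2l]}}$ to assume $v$ has no $f_1,\dots,f_{N-2l}$ components, whereas you keep $v$ general and let the degree-$(m-1)$ piece kill those coefficients; both routes lead to the same case split and the same sign computation for $l=1$.
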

\begin{proof}
	We already know that $\langle e_1 , \ldots , e_{N-2l}\rangle_\mathbb C=E\cap E_{N-2l} \subset H_{\bold{e}_{[N]} + \bold{e}_{[N-2l]}}$ holds. Assume that there exists $v \in H_{\bold{e}_{[N]} + \bold{e}_{[N-2l]}}$ such that $v \notin (E\cap E_{N-2l}) \setminus \{0\}$. Thus, since $H_{\bold{e}_{[N]} + \bold{e}_{[N-2l]}}$ is fully isotropic, up to linear combinations we can consider a decomposition of $v$ with respect to the standard hyperbolic basis $(e_1,\ldots, e_N,f_1,\ldots, f_N)$ of the form $v= \alpha_1 e_{N-2l+1} + \ldots + \alpha_{2l}e_N + \beta_1f_{N-2l+1} + \ldots + \beta_{2l}f_N$ for some $\alpha_i,\beta_j \in \mathbb C$. Since $H_{\bold{e}_{[N]} + \bold{e}_{[N-2l]}}=\Ker(\psi_{\bold{e}_{[N]} + \bold{e}_{[N-2l]}})$, it holds 
	\begin{align*}
	0 & = v \cdot (\bold{e}_{[N]} + \bold{e}_{[N-2l]})\\
	& \stackrel{\eqref{multiplication}}{=} \sum_{i=1}^{2l}\alpha_i(e_1\wedge \ldots \wedge e_{N-2l}\wedge e_{N-2l+i})+ \sum_{j=1}^{2l}(-1)^{j+1}\beta_j(e_1\wedge \ldots \wedge \hat{e}_{N-2l+j} \wedge \ldots \wedge e_N) \ ,
	\end{align*}
	where $\hat{e}_{N-2l+j}$ denotes that the vector is missing in the wedge product. \\
	Now, the summands above may simplify one to each other if and only if $l= 1$, otherwise they do not since they are all independent vectors in the standard basis of $\bigwedge E$. It follows that, for $l\geq 2$, it holds $v\cdot (\bold{e}_{[N]} + \bold{e}_{[N-2l]})=0$ if and only if $\alpha_i=\beta_j=0$ for all $i,j\in [2l]$, that is $H_{\bold{e}_{[N]} + \bold{e}_{[N-2l]}}=E \cap E_l$. On the other hand, for $l=1$, the conditions $\alpha_1=\beta_2$ and $\alpha_2=-\beta_1$ give a non-zero vector $0\neq v\in H_{\bold{e}_{[N]} + \bold{e}_{[N-2l]}}\setminus (E\cap E_l)$, thus $E \cap E_1 \subsetneq H_{\bold{e}_{[N]} + \bold{e}_{[N-2l]}}=\langle e_1,\ldots , e_{N-2}, e_{N-1}+ f_N, e_N - f_{N-1}\rangle_\mathbb C$.
\end{proof}

\begin{cor}\label{cor:fiber of Sigma_l}
	For any $[a+b]\in \sigma_2^\circ(\mathbb S_{N}^+) \setminus \mathbb S_{N}^+$, the equality $H_{a+b}=H_a\cap H_b$ holds.
\end{cor}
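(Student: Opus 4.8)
The plan is to reduce the general statement to the homogeneous representative by equivariance, and then to combine the previous lemma with the fact that we have already identified the orbit structure of $\sigma_2^\circ(\mathbb S_N^+)$. First I would recall that by Proposition \ref{prop:O_l is orbit} and Corollary \ref{cor:orbit partition of SxS}, any $[a+b]\in\sigma_2^\circ(\mathbb S_N^+)\setminus\mathbb S_N^+$ arises from a pair of distinct pure spinors $([a],[b])$ of Hamming distance $d([a],[b])=l$ for some $1\le l\le \frac N2$, and that the $\Spin_{2N}$-action carries such a pair to the standard pair $([\bold e_{[N]}],[\bold e_{[N-2l]}])$. Since $\psi_{g\cdot a}(v)=g\cdot\psi_a(g^{-1}vg)\cdot(\text{scalar})$ — i.e.\ the assignment $a\mapsto H_a=\Ker\psi_a$ is $\Spin_{2N}$-equivariant with respect to the conjugacy action on $V$ (this is exactly Remark \ref{rmk:spin on OG} applied to the single spinor $a+b$) — it suffices to prove $H_{\bold e_{[N]}+\bold e_{[N-2l]}}=H_{\bold e_{[N]}}\cap H_{\bold e_{[N-2l]}}$ for each relevant $l$.

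Next I would split into two cases according to $l$. For $l\ge 2$ this equality is precisely the content of Lemma \ref{lemma:dim H_{w+e_l}}: there $H_{\bold e_{[N]}+\bold e_{[N-2l]}}=E\cap E_{N-2l}=H_{\bold e_{[N]}}\cap H_{\bold e_{[N-2l]}}$, and we are done. For $l=1$, however, Lemma \ref{lemma:dim H_{w+e_l}} tells us the \emph{opposite}: the inclusion $H_{\bold e_{[N]}}\cap H_{\bold e_{[N-2]}}\subsetneq H_{\bold e_{[N]}+\bold e_{[N-2]}}$ is strict. The point is that when $d([a],[b])=1$ the line $L([a],[b])$ lies entirely inside $\mathbb S_N^+$, so \emph{every} point of that line — including $[a+b]$ — is itself a pure spinor, hence $[a+b]\in\mathbb S_N^+$ and is excluded by hypothesis. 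So I would argue that the case $l=1$ simply does not occur for points of $\sigma_2^\circ(\mathbb S_N^+)\setminus\mathbb S_N^+$: if $d([a],[b])=1$ then $[a+b]\in L([a],[b])\subset\mathbb S_N^+$, contradicting $[a+b]\notin\mathbb S_N^+$. (Conversely one should note $d([a],[b])\ge 2$ is exactly the condition that $L([a],[b])\cap\mathbb S_N^+=\{[a],[b]\}$, as recalled after Proposition \ref{prop:hamming distance}.) Thus for the remaining pairs we always have $l\ge 2$, and the equality $H_{a+b}=H_a\cap H_b$ follows from the previous paragraph.

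The only subtle point — the step I expect to be the genuine obstacle — is justifying that $a\mapsto H_a$ is equivariant \emph{up to nonzero scalar on $a$}, so that the orbit reduction is legitimate at the level of subspaces of $V$ rather than just of projective classes. This is where one uses $x^{-1}=x^*$ for $x\in\Spin_{2N}$ together with $xVx^{-1}=V$: for $v\in V$ and $x\in\Spin_{2N}$, $v\cdot(x\cdot a)=x\cdot\big((x^{-1}vx)\cdot a\big)$, so $v\in\Ker\psi_{x\cdot a}\iff x^{-1}vx\in\Ker\psi_a$, i.e.\ $H_{x\cdot a}=xH_ax^{-1}$; and the same identity applied to $a+b$ in place of $a$ gives $H_{x\cdot a+x\cdot b}=xH_{a+b}x^{-1}$. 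Intersecting with $H_{x\cdot a}\cap H_{x\cdot b}=x(H_a\cap H_b)x^{-1}$ shows the desired equality is $\Spin_{2N}$-invariant, which is all that is needed. Everything else is a direct appeal to Lemma \ref{lemma:dim H_{w+e_l}} and the Hamming-distance dichotomy.
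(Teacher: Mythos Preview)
Your argument is correct and follows essentially the same route as the paper: reduce via the $\Spin_{2N}$--action on pairs (Proposition \ref{prop:O_l is orbit}) and the equivariance $H_{x\cdot a}=xH_ax^{-1}$ to the standard representative, then invoke Lemma \ref{lemma:dim H_{w+e_l}} for $l\ge 2$ after ruling out $l=1$ by the observation that distance--$1$ lines lie in $\mathbb S_N^+$. The paper's proof carries a little extra content: it simultaneously considers a second decomposition $[a+b]=[c+d]$ and derives $d([a],[b])=d([c],[d])$ from the same dimension count, which is what justifies the well-definedness of the subsequent orbits $\Sigma_{l,N}$ in \eqref{def:secant orbits}; your version proves exactly the stated corollary and nothing more.
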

\begin{proof}
	Let $[a+b]=[c+d] \in \sigma^\circ(\mathbb S_{N}^+) \setminus \mathbb S_{N}^+$ be such that $d([a],[b])=l$ and $d([c],[d])=m$ for certain $2\leq l,m\leq \frac{N}{2}$. By Proposition \ref{prop:O_l is orbit} there exists $g \in \Spin^q(V)$ such that $g\cdot ([a],[b])= ([\bold{e}_{[N]}],[\bold{e}_{[N-2l]}])$ and $g\cdot ([c],[d])= ([c'],[d'])$. In particular, $[\bold{e}_{[N]} + \bold{e}_{[N-2l]}]=g\cdot [a+b]=g\cdot [c+d] = [c'+d']$, and by Proposition \ref{lemma:dim H_{w+e_l}} we get $H_{c'}\cap H_{d'}\subset H_{c'+d'}=H_{\bold{e}_{[N]} + \bold{e}_{[N-2l]}}=E\cap E_{N-2l}$, where the last equality follows from Lemma \ref{lemma:dim H_{w+e_l}} since $l\geq 2$. Dimensionally we have $N-2m=\dim(H_{c'}\cap H_{d'})\leq \dim(E \cap E_{N-2l})=N-2l$. But, by symmetry, one also gets $N-2l=\dim(H_{a'}\cap H_{b'})\leq \dim(E \cap E_{N-2m})=N-2m$, thus $l=m$ and the thesis follows.
\end{proof}

\indent Corollary \ref{cor:fiber of Sigma_l} allows to define for any $l=2:\frac{N}{2}$ the subset
\begin{align}\label{def:secant orbits}
\Sigma_{l,N}& :=\left\{ [a+b]\in \sigma_2^\circ(\mathbb S_{N}^+) \ | \ \dim H_{a+b}=N-2l\right\} \nonumber\\
& \ = \left\{[a+b]\in \sigma_2^\circ(\mathbb S_{N}^+) \ | \ d([a],[b])=l\right\} \ .
\end{align}
Moreover, we set $\Sigma_{1,N}:=\mathbb S_{N}^+$. Our claim is that the subsets $\Sigma_{l,N}$ are exactly the $\Spin^q(V)$-orbits in $\sigma_2^\circ(\mathbb S_{N}^+)$.\\
\indent The action of $\Spin^q(V)$ on $\sigma_2(\mathbb S_{N}^+)$ preserves the subsets $\Sigma_{l,N}$, as by Remark \ref{rmk:spin on OG} the spin group acts on $V$ and its subspaces by conjugacy. Moreover, by Proposition \ref{prop:O_l is orbit} any two pairs $([a],[b])$ and $([c],[d])$ of Hamming distance $l$ are conjugated, hence their lines $L([a],[b])$ and $L([c],[d])$ are so. Finally, the following result proves that $\Spin^q(V)$ acts transitively on points on a same line $L([a],[b])\setminus\{[a],[b]\}$ too.

\begin{lemma}\label{lemma:transitivity on lines}
	For any two distinct pure spinors $[a],[b]\in \mathbb S_{N}^+$, the spin group $\Spin^q(V)$ acts transitively on $L([a],[b])\setminus\{[a],[b]\}$.
\end{lemma}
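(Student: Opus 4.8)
The plan is a normal-form reduction followed by a torus computation. First I would set $l := d([a],[b])$; since $[a]\neq [b]$ one has $1 \le l \le \tfrac N2$. By Proposition \ref{prop:O_l is orbit} there is $g \in \Spin^q(V)$ with $g\cdot([a],[b]) = ([\bold{e}_{[N]}],[\bold{e}_{[N-2l]}])$, and $g$ then carries $L([a],[b])\setminus\{[a],[b]\}$ bijectively onto $L([\bold{e}_{[N]}],[\bold{e}_{[N-2l]}])\setminus\{[\bold{e}_{[N]}],[\bold{e}_{[N-2l]}]\}$. Hence transitivity of $\Spin^q(V)$ on the latter implies it on the former, and it suffices to treat the normal-form line: its points are $[\bold{e}_{[N]} + t\,\bold{e}_{[N-2l]}]$ with $t\in\mathbb C^\times$, so I only need to produce, for each such $t$, an element of $\Spin^q(V)$ fixing $[\bold{e}_{[N]}]$ and $[\bold{e}_{[N-2l]}]$ and carrying $[\bold{e}_{[N]}+\bold{e}_{[N-2l]}]$ to $[\bold{e}_{[N]} + t\,\bold{e}_{[N-2l]}]$.

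For this I would use the subgroup of the maximal torus rescaling a single hyperbolic plane $\langle e_k,f_k\rangle$ for a fixed index $k$ with $N-2l < k \le N$ (such $k$ exists since $l\ge 1$). Since $e_kf_k \in \Cl_q^+(V)$ is idempotent, for $z\in\mathbb C^\times$ the element $u_z := 1 + (z-1)\,e_kf_k$ is invertible, $\conj_{u_z}$ is the special orthogonal map fixing every $e_j,f_j$ with $j\neq k$ and sending $e_k\mapsto ze_k$, $f_k\mapsto z^{-1}f_k$, and its spinor norm is $N(u_z)=u_zu_z^*=z$; thus $g_z := z^{-1/2}u_z \in \Spin^q(V)$. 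On the spin module $\bigwedge^\bullet E$ one has $g_z\cdot x = z^{-1/2}\big(x + (z-1)\,e_k\wedge(f_k\neg x)\big)$, and the elementary identities $f_k\neg\bold{e}_I=0$ for $k\notin I$ and $e_k\wedge(f_k\neg\bold{e}_I)=\bold{e}_I$ for $k\in I$ show that $g_z$ scales $\bold{e}_I$ by $z^{-1/2}$ when $k\notin I$ and by $z^{1/2}$ when $k\in I$. As $k\in[N]$ while $k\notin[N-2l]$, this gives $[g_z\cdot(\bold{e}_{[N]}+\bold{e}_{[N-2l]})] = [z\,\bold{e}_{[N]}+\bold{e}_{[N-2l]}] = [\bold{e}_{[N]} + z^{-1}\bold{e}_{[N-2l]}]$, and choosing $z = t^{-1}$ concludes. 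One may also argue more softly: $\Spin^q(V)\to\SO(V)$ is surjective, so this diagonal torus element lifts to $\Spin^q(V)$, and on the one-dimensional weight lines spanned by the $\bold{e}_I$ it acts by torus characters that separate $\bold{e}_{[N]}$ from $\bold{e}_{[N-2l]}$.

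I do not expect a genuine obstacle here: the content is the reduction to normal form (which is Proposition \ref{prop:O_l is orbit}) plus a one-parameter torus acting as a ``scaling'' between the two vertices of the line. The only steps needing routine care are verifying that $\conj_{u_z}$ is exactly the stated orthogonal transformation and that the normalization $z^{-1/2}u_z$ indeed lands in $\Spin^q(V)$ (i.e.\ has spinor norm $1$ and conjugates $V$ into itself), together with the two contraction identities for $f_k\neg\bold{e}_I$ used in the module computation.
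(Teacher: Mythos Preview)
Your proof is correct and follows the same overall strategy as the paper: reduce to the normal-form line via Proposition~\ref{prop:O_l is orbit}, then use a torus element to move transitively along it. The difference lies only in the choice of torus element. The paper first further reduces to the case $l=\tfrac{N}{2}$ by factoring out $\bold{e}_{[N-2l]}$ and working inside the smaller Spinor variety $\mathbb S_{2l}^+$, then takes $\tilde g=(a_1e_1+b_1f_1)\cdots(a_Ne_N+b_Nf_N)$, a product of $N$ anisotropic vectors (one per hyperbolic plane), and solves the two scalar constraints $\prod a_ib_i=1$ and $z\prod a_i=\prod b_i$. Your argument is more economical: you stay at arbitrary $l$, pick a single index $k\in[N]\setminus[N-2l]$, and use the one-parameter subgroup $g_z=z^{-1/2}\bigl(1+(z-1)e_kf_k\bigr)$ of the maximal torus acting on just one hyperbolic plane. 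This avoids the second reduction step and the bookkeeping over all $N$ planes, at the cost of checking directly that $g_z\in\Spin^q(V)$ (the paper's product-of-vectors form makes this immediate from the definition). Both routes are short; yours makes the weight-space mechanism more transparent.
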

\begin{proof}
	Since the lines defined by pairs of pure spinors having the same Hamming distance are all conjugated, it is enough to prove the transitivity on the line $L([\bold{e}_{[N]}],[\bold{e}_{[N-2l]}])$. Moreover, given a point $[\lambda \bold{e}_{[N]} + \mu \bold{e}_{[N-2l]}]=[\bold{e}_{[N]} + z \bold{e}_{[N-2l]}] \in L([\bold{e}_{[N]}],[\bold{e}_{[N-2l]}])$, we can rewrite it as $\bold{e}_{[N]}+z\bold{e}_{[N-2l]}=e_1\wedge \ldots \wedge e_{N-2l}\wedge \left(e_{N-2l+1}\wedge \ldots \wedge e_N + z\mathbbm{1}\right)$. Since $[e_{N-2l+1}\wedge \ldots \wedge e_N]$ and $[z\mathbbm{1}]$ are pure spinors in $\mathbb S_{2l}^+$, we can restrict to consider the line $L([\bold{e}_{[N]}],[\mathbbm{1}])$. \\
	\indent Given $\bold{e}_{[N]}+z \mathbbm{1}$, we look for a spin element $g\in\Spin^q(V)$ such that $\bold{e}_{[N]}+ z \mathbbm{1}= k (\bold{e}_{[N]} + \mathbbm{1})$ for some $k \in \mathbb C^\times$. We consider the element $\tilde g=(a_1e_1+b_1f_1)\cdots (a_Ne_N+b_Nf_N)\in \Cl_q^+(V)$ for certain $a_i,b_i\in \mathbb C^\times$: it is product of an even number of vectors in $V$ by the assumption as $N$ is even. Then $\tilde g\in\Spin^q(V)$ if and only if it is invertible and it has unitary spinor norm, that is if and only if $\prod_{i=1}^N a_ib_i\stackrel{\clubsuit}{=}1$. Via Clifford multiplication \eqref{multiplication} it holds  $(a_1e_1 + b_1f_1)\cdot (\bold{e}_{[N]} + z \mathbbm{1})=a_1ze_1+ b_1e_2\wedge \ldots \wedge e_N$,
	and by iterating for $i=1:N$ one gets $\tilde g \cdot (\bold{e}_{[N]} + z\mathbbm{1})= a_1\cdots a_Nz \bold{e}_{[N]} + b_1\cdots b_N \mathbbm{1}$. In particular, the condition $\tilde g \cdot (\bold{e}_{[N]}+z\mathbbm{1})=k(\bold{e}_{[N]}+\mathbbm{1})$ is equivalent to the condition $z\prod_{i=1}^N a_i\stackrel{\spadesuit}{=}\prod_{i=1}^N b_i$. By putting together the conditions $(\clubsuit)$ and $(\spadesuit)$, it is straightforward that for the choice $a_1=\ldots =a_N=\sqrt[2N]{z^{-1}}$ and $b_1=\ldots=b_N=\sqrt[2N]{z}$ one gets $\tilde g\cdot (\bold{e}_{[N]} + z \mathbbm{1})=\sqrt{z}(\bold{e}_{[N]} + \mathbbm{1})$.
\end{proof}

It follows that for any $l=1:\frac{N}{2}$ the subset $\Sigma_{l,N}$ is a $\Spin^q(V)$-orbit in $\sigma_2^\circ(\mathbb S_{N}^+)$. Moreover, by Proposition \ref{prop:O_l is orbit} we deduce that for any $l$ it holds $\Sigma_{l,N}=\Spin(V)\cdot [\bold{e}_{[N]}+ \bold{e}_{[N-2l]}]$.\\
In conclusion, we have proved the following theorem.

\begin{teo}\label{thm:secant orbit in sec2}
	The dense set $\sigma_2^\circ(\mathbb S_{N}^+)$ splits under the action of $\Spin_{2N}$ in the orbits
	\[ \sigma_2^\circ(\mathbb S_{N}^+)= \bigsqcup _{l=1}^{\frac{N}{2}} \Spin_{2N}\cdot [\bold{e}_{[N]}+ \bold{e}_{[N-2l]}] \ .\]
\end{teo}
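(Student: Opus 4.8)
The plan is to assemble Theorem \ref{thm:secant orbit in sec2} directly from the results proved immediately before it, which have already done essentially all the work; the theorem is a packaging statement. First I would recall that by definition $\sigma_2^\circ(\mathbb S_N^+)=\{[a+b] : [a],[b]\in\mathbb S_N^+\}$, and that by Corollary \ref{cor:fiber of Sigma_l} every point $[a+b]\in\sigma_2^\circ(\mathbb S_N^+)\setminus\mathbb S_N^+$ has a well-defined invariant, namely the Hamming distance $l=d([a],[b])\in\{2,\dots,\frac N2\}$ (well-defined because $H_{a+b}=H_a\cap H_b$ forces $\dim H_{a+b}=N-2l$ independently of the chosen decomposition, via Proposition \ref{prop:hamming distance}). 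Together with the convention $\Sigma_{1,N}=\mathbb S_N^+$, this gives the set-theoretic partition
\[
\sigma_2^\circ(\mathbb S_N^+)=\bigsqcup_{l=1}^{\frac N2}\Sigma_{l,N},
\]
where $\Sigma_{l,N}$ is as in \eqref{def:secant orbits}. So the only thing left is to show each $\Sigma_{l,N}$ is a single $\Spin_{2N}$-orbit, and to identify a representative.

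For this I would invoke the two transitivity statements already established. The $\Spin_{2N}$-action preserves each $\Sigma_{l,N}$ because, by Remark \ref{rmk:spin on OG}, the spin group acts on $V$ and its subspaces by conjugacy, hence preserves $\dim H_{a+b}$ (equivalently, by Remark \ref{rmk:hamming for G-varieties} it preserves the Hamming distance). Now take any two points $[a+b],[c+d]\in\Sigma_{l,N}$, so $d([a],[b])=d([c],[d])=l$. By Proposition \ref{prop:O_l is orbit}, $\cal O_{l,N}$ is a single orbit, so there exist $g,h\in\Spin_{2N}$ with $g\cdot([a],[b])=([\bold e_{[N]}],[\bold e_{[N-2l]}])=h\cdot([c],[d])$; in particular $g\cdot[a+b]$ and $h\cdot[c+d]$ both lie on the line $L([\bold e_{[N]}],[\bold e_{[N-2l]}])$. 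Hence it suffices to move points along a single such line, and that is exactly Lemma \ref{lemma:transitivity on lines}: $\Spin_{2N}$ acts transitively on $L([\bold e_{[N]}],[\bold e_{[N-2l]}])\setminus\{[\bold e_{[N]}],[\bold e_{[N-2l]}]\}$, so there is $k\in\Spin_{2N}$ with $k\cdot(g\cdot[a+b])=h\cdot[c+d]$, giving $(h^{-1}kg)\cdot[a+b]=[c+d]$. This proves $\Sigma_{l,N}=\Spin_{2N}\cdot[\bold e_{[N]}+\bold e_{[N-2l]}]$ for $l\ge 2$, and the case $l=1$ is just homogeneity of $\mathbb S_N^+=\Sigma_{1,N}=\Spin_{2N}\cdot[v_{\omega_N}]$ with $[v_{\omega_N}]=[\bold e_{[N]}]=[\bold e_{[N]}+\bold e_{[N-2]}]$ up to noting $\bold e_{[N]}+0$; more cleanly, for $l=1$ one writes any point of $\mathbb S_N^+$ as $[\bold e_{[N]}]$ directly.

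Since there is essentially no new calculation, there is no real obstacle; the subtlety — if any — is purely bookkeeping. The one point that deserves a careful sentence is \emph{why the Hamming distance is a well-defined function on $\sigma_2^\circ(\mathbb S_N^+)\setminus\mathbb S_N^+$}, i.e. why it does not depend on the chosen decomposition $[a+b]=[c+d]$; this is precisely the content of Corollary \ref{cor:fiber of Sigma_l} (whose proof in turn rests on Lemma \ref{lemma:dim H_{w+e_l}}, where the hypothesis $l\ge 2$ is essential — for $l=1$ the spinor $[a+b]$ is pure and $H_{a+b}$ is strictly larger than $H_a\cap H_b$, which is why $\mathbb S_N^+$ must be singled out as $\Sigma_{1,N}$). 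Once that invariant is in hand, the partition is immediate and the orbit structure follows from Proposition \ref{prop:O_l is orbit} and Lemma \ref{lemma:transitivity on lines} as above. I would present the argument in three short steps: (i) the partition by Hamming distance, citing Corollary \ref{cor:fiber of Sigma_l}; (ii) $\Spin_{2N}$-invariance of each $\Sigma_{l,N}$, citing Remark \ref{rmk:spin on OG}; (iii) transitivity on each $\Sigma_{l,N}$, combining Proposition \ref{prop:O_l is orbit} with Lemma \ref{lemma:transitivity on lines}, and reading off the representative $[\bold e_{[N]}+\bold e_{[N-2l]}]$.
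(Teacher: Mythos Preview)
Your proposal is correct and follows essentially the same approach as the paper: the theorem is stated there as a summary of the preceding discussion, which (just as you do) combines Corollary \ref{cor:fiber of Sigma_l} to make the Hamming-distance invariant well-defined, the $\Spin$-invariance of each $\Sigma_{l,N}$ via the conjugacy action on subspaces, and then Proposition \ref{prop:O_l is orbit} together with Lemma \ref{lemma:transitivity on lines} to obtain transitivity on each $\Sigma_{l,N}$ with representative $[\bold e_{[N]}+\bold e_{[N-2l]}]$. Your only wobble is the sentence on $l=1$: note that $[\bold e_{[N]}+\bold e_{[N-2]}]$ is not equal to $[\bold e_{[N]}]$ but is a pure spinor on the line $L([\bold e_{[N]}],[\bold e_{[N-2]}])\subset\mathbb S_N^+$, so by homogeneity it is a perfectly good representative of $\Sigma_{1,N}=\mathbb S_N^+$.
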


\subsection{Tangent orbits in $\tau(\mathbb S_{N}^+)$}\label{subsec:tangent orbits}

\indent \indent From the non-defectivity of $\sigma_2(\mathbb S_{N}^+)$ and the dicotomy between tangential and secant varieties \cite[Corollary $4$]{FH79}, we know that $\tau(\mathbb S_4^+)=\sigma_2(\mathbb S_4^+)=\mathbb P^{7}$ and $\tau(\mathbb S_5^+)=\sigma_2(\mathbb S_5^+)=\mathbb P^{15}$, while for $N\geq 6$ the tangential variety $\tau(\mathbb S_{N}^+)$ is a divisor in $\sigma_2(\mathbb S_{N}^+)\subset \mathbb P^{2^{N-2}-1}$. We deduce the orbit partition of $\tau(\mathbb S_{N}^+)$ from the tangent bundle on the Spinor variety.\\
\indent Let $\cal T_{\mathbb S_N^+}$ be the tangent bundle on the Spinor variety $\mathbb S_{N}^+$. From the parametrization of Spinor varieties, one can describe the fiber at any pure spinor $[a]\in \mathbb S_N^+$ as
\[ \left(\cal T_{\mathbb S_{N}^+} \right)_{[a]} \ = \ T_{[a]}\mathbb S_{N}^+ \ \simeq \ \bigwedge^2H_a  \ , \]
leading to the isomorphism of homogeneous bundles $\cal T_{\mathbb S_{N}^+} \ \simeq \  \bigwedge^2\cal U^\vee$, where $\cal U$ is the rank--$N$ universal bundle on $\mathbb S_N^+$ obtained by pulling back the universal bundle on the Grassmannian $\Gr(N,V)$. \\
\indent By homogeneity of $\mathbb S_{N}^+$, all tangent spaces are conjugated one to each other by transformations in $\Spin_{2N}\setminus P_{\omega_{N}}$. Thus the $\Spin_{2N}$--orbits of points in the tangential variety $\tau(\mathbb S_{N}^+)$ are in bijection with the $P_{\omega_N}$--orbits in the tangent space $T_{[v_{\omega_N}]}\mathbb S_N^+\simeq \bigwedge^2E$, which are parametrized by the possible ranks of skew-symmetric matrices of size $N$. Let $\left[ T_{[a]}\mathbb S_{N}^+\right]_{2l}$ be the set of tangent points to $\mathbb S_{N}^+$ at the pure spinor $[a]$ corresponding to skew-symmetric matrices of size $N$ and rank $2l$. Then for any $l=1:\frac{N}{2}$ we denote the set of all tangent points corresponding to rank--$2l$ skew-symmetric matrices by
\begin{equation}\label{def:tangent orbits}
\Theta_{l,N}:=\left\{[q]\in \tau(\mathbb S_{N}^+) \ | \ \exists [a]\in \mathbb S_{N}^+ \ : \ [q]\in \left[T_{[a]}\mathbb S_{N}^+ \right]_{2l}\right\} \ .
\end{equation}
The above arguments ensure that each subset $\Theta_{l,N}$ is indeed a $\Spin^q(V)$-orbit, and all together they give the $\Spin^q(V)$-orbit partition of $\tau(\mathbb S_{N}^+)$.\\
\indent Finally, for any $l=1:\frac{N}{2}$ the tangent orbit $\Theta_{l,N}$ admits as representative the spinor
\begin{equation}\label{def:tangent representative}
[q_l]:=\left[ \sum_{i=1}^{l}e_{2i-1}\wedge e_{2i} \right] \ .
\end{equation}
\noindent Indeed, the curve of rank--$2l$ skew-symmetric matrices of size $N$
\begin{equation}\label{standard skewsymm matrix}
tC_l=t{\tiny \begin{bmatrix}
	P_1 \\
	& \ddots \\
	& & P_l \\
	& & & 0_{N-2l} 
	\end{bmatrix}} \ \ \ \text{where} \ \ \ P_i={\scriptsize \begin{bmatrix}
	0 & 1 \\ -1 & 0
	\end{bmatrix}} 
\end{equation} 
defines the curve of maximal fully isotropic subspaces
\[ H_{c(t)}=\left\langle f_1+t\sum_{k=1}^{N}c_{k1}e_k, \ldots, f_N+t\sum_{k=1}^{N}c_{kN}e_k \right\rangle_\mathbb C \ \in \ \OG^+(N,V) \ , \]
which by Proposition \ref{prop:from MFI to pure spinor} corresponds to the curve of pure spinors $c(t)= \sum_{I \in 2^{[N]}}\Pf(C(t)_I)\bold{e}_I$ passing at $c(0)=\mathbbm{1}$ with direction $c'(0)=q_l$, thus $[q_l]\in T_{[\mathbbm{1}]}\mathbb S_{N}^+$. This proves the next theorem.

\begin{teo}\label{thm:tangent orbit in sec2}
	The tangential variety $\tau(\mathbb S_{N}^+)$ splits in the $\Spin_{2N}$-orbits
	\[ \tau(\mathbb S_{N}^+) \ = \ \bigsqcup_{l=1}^{\frac{N}{2}}\Spin_{2N}\cdot \left[\sum_{i=1}^l e_{2i-1}\wedge e_{2i}\right] \ .\]
\end{teo}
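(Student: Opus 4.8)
The plan is to leverage the description of the tangent bundle on the Spinor variety together with the homogeneity of $\mathbb S_N^+$ under $\Spin_{2N}$. First I would recall the isomorphism $\cal T_{\mathbb S_N^+} \simeq \bigwedge^2 \cal U^\vee$ of homogeneous bundles, so that at the base point $[v_{\omega_N}] = [\bold{e}_{[N]}]$ the affine tangent cone is identified with $\bigwedge^2 E$ (since $H_{\bold{e}_{[N]}} = E$). Because $\Spin_{2N}$ acts transitively on $\mathbb S_N^+$, every tangent space $T_{[a]}\mathbb S_N^+$ is conjugate to $T_{[\bold{e}_{[N]}]}\mathbb S_N^+$ by some element of $\Spin_{2N}$; hence a point $[q] \in \tau(\mathbb S_N^+)$ is, up to the $\Spin_{2N}$-action, a point of $\mathbb P(\bigwedge^2 E)$, and two such points lie in the same $\Spin_{2N}$-orbit on $\tau(\mathbb S_N^+)$ if and only if they lie in the same $P_{\omega_N}$-orbit on $\mathbb P(\bigwedge^2 E)$. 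This reduces the classification of tangent orbits to the classification of $P_{\omega_N}$-orbits on $\bigwedge^2 E$.

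Next I would identify these $P_{\omega_N}$-orbits. The parabolic $P_{\omega_N}$ surjects onto $\GL(E)$ (this is the Levi quotient acting on the tangent space, which transforms as $\bigwedge^2 E$), and $\GL(E)$ acts on skew-symmetric $N \times N$ matrices $\bigwedge^2 E$ with orbits indexed precisely by the even rank $2l$, $l = 0 : \tfrac N2$; the unipotent radical acts trivially on the associated graded piece $\bigwedge^2 E$ of the tangent space. Therefore the $P_{\omega_N}$-orbits on $\mathbb P(\bigwedge^2 E \setminus \{0\})$ are exactly $\left[ T_{[\bold{e}_{[N]}]}\mathbb S_N^+ \right]_{2l}$ for $l = 1 : \tfrac N2$, which is the content of definition \eqref{def:tangent orbits} that each $\Theta_{l,N}$ is a single $\Spin^q(V)$-orbit, and that $\tau(\mathbb S_N^+) = \bigsqcup_{l=1}^{N/2} \Theta_{l,N}$.

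Finally I would exhibit the stated representative. For a fixed $l$, take the rank-$2l$ block matrix $C_l$ in \eqref{standard skewsymm matrix} and form the curve $t \mapsto H_{c(t)} \in \OG^+(N,V)$ of maximal isotropic subspaces spanned by $f_j + t\sum_k c_{kj}e_k$. By Proposition \ref{prop:from MFI to pure spinor}, this corresponds to the curve of pure spinors $c(t) = \sum_{I} \Pf(C(t)_I)\, \bold{e}_I$ with $c(0) = \mathbbm 1 = \ell_{\omega_N} \in \mathbb S_N^+$; differentiating at $t = 0$ and using $\Pf(C(t)_I) = t^{|I|/2}\Pf((C_l)_I) + O(t^{|I|/2 + 1})$, only the size-$2$ subsets $I$ contribute linearly, giving $c'(0) = \sum_{i=1}^l e_{2i-1}\wedge e_{2i} = q_l$. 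Hence $[q_l] \in T_{[\mathbbm 1]}\mathbb S_N^+$ and, since the skew-symmetric matrix $C_l$ has rank exactly $2l$, the point $[q_l]$ lies in $\Theta_{l,N}$ and is a representative of it. Assembling, $\tau(\mathbb S_N^+) = \bigsqcup_{l=1}^{N/2} \Spin_{2N}\cdot [q_l]$.

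The main obstacle is the bookkeeping in the reduction step: one must be careful that passing from $T_{[a]}\mathbb S_N^+$ for varying $[a]$ to a single model tangent space $\bigwedge^2 E$ is compatible with the orbit structure — i.e.\ that no two distinct ranks get identified and no single rank-stratum gets split when one quotients by the full stabilizer rather than by the Levi. This is exactly where the homogeneity of $\mathbb S_N^+$ and the transitivity of $\Spin_{2N}$ on pairs (base point, tangent direction of fixed rank) must be invoked carefully; the computation of $c'(0)$ via pfaffians is then routine.
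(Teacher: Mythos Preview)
Your proposal is correct and follows essentially the same route as the paper: reduce to $P_{\omega_N}$--orbits on a single model tangent space $\bigwedge^2 E$ via homogeneity, identify these with rank strata of skew-symmetric matrices, and then verify that the curve through $[\mathbbm{1}]$ defined by $tC_l$ has derivative $q_l$ using Proposition~\ref{prop:from MFI to pure spinor}. The only addition you make is the explicit mention of the Levi quotient $\GL(E)$ and the triviality of the unipotent radical on $\bigwedge^2 E$, which the paper leaves implicit.
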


\subsection{Inclusions among closures of $\Spin_{2N}$--orbits}

\indent \indent We have treated the secant orbits and the tangent orbits separately, now we analyze the behaviour of their inclusions. First of all, notice that the tangent representative $[q_1]=[e_1\wedge e_2]\in\Theta_{1,N}$ is a pure spinor, hence 
\[ \Theta_{1,N}=\mathbb S_{N}^+=\Sigma_{1,N} \ . \] 

\begin{obs*} 
	The orbit $\Theta_{1,N}$ is given by skew-symmetric matrices having rank (as matrices) $2$, hence it is described by the Grassmannian of planes $\Gr(2,N)$. This agrees with the more theoretical result \cite[Prop. 2.5 \& Subsec. 3.1]{LM03} $T_x\mathbb S_{N}^+ \cap \mathbb S_{N}^+\simeq\Gr(2,N)$ for any $x \in \mathbb S_{N}^+$.
\end{obs*}

Moreover, for $l=2$, a representative of $\Theta_{2,N}$ is $[q_2]=[e_1\wedge e_2 + e_3\wedge e_4]$. But $[e_1\wedge e_2]$ and $[e_3\wedge e_4]$ are pure spinors with corresponding subspaces $H_{e_1\wedge e_2}=E_{[2]}$ and $H_{e_3\wedge e_4}=\left\langle f_1,f_2,e_3,e_4,f_5, \ldots , f_N \right\rangle_\mathbb C$, hence they have Hamming distance $2$ and $[q_2]\in \Sigma_{2,N}$. Thus 
\[ \Theta_{2,N} = \Sigma_{2,N} \ . \]

\begin{obs}\label{rmk:subspaces for tangent points}
	Via the map $\psi_q$ in \eqref{psi_a} every $[q]\in \Theta_{l,N}$ defines a subspace $H_q\in \OG(N-2l,V)$: indeed, the representative $[q_l]$ \eqref{def:tangent representative} defines the subspace $H_{q_l}=\langle f_{2l+1},\ldots, f_N\rangle_{\mathbb C}$ of dimension $N-2l$ and $\Spin^q(V)$ acts by conjugacy on the subspaces, preserving their dimensions. Moreover, by definition, a tangent point $[q]\in T_{[a]}\mathbb S_{N}^+\cap\Theta_{l,N}$ is the direction of a curve $\gamma(t)=\{[a(t)] \ | \ t \in (-\epsilon, \epsilon)\}\subset\mathbb S_{N}^+$ passing at $\gamma(0)=[a]$ and, up to considering a smaller neighbourhood, one can assume $d([a],[a(t)])=m$ for any $t\in (0,\epsilon)$. Then for any $t> 0$ the spinor $\frac{a-a(t)}{t}$ defines a subspace $H_t\in \OG(N-2l,V)$ and one gets the equality $H_q=\lim_{t \rightarrow 0}H_t \in \OG(N-2l,V)$.
\end{obs}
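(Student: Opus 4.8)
The plan is to prove the two assertions of the remark separately, writing $H_q:=\Ker\psi_q$ for $\psi_q$ as in \eqref{psi_a} (well defined on $[q]$ since $\psi_{\lambda q}=\lambda\psi_q$). For the well-definedness $H_q\in\OG(N-2l,V)$ I would first record that $\psi$ is $\Spin^q(V)$-equivariant in the strong form $H_{g\cdot q}=\conj_g(H_q)$: this follows from $v\cdot(g\cdot q)=g\big((g^{-1}vg)\cdot q\big)$ together with $\Spin^q(V)\subset N_{\Cl_q(V)^\times}(V)$, so that $v\in H_{g\cdot q}$ iff $\conj_{g^{-1}}(v)\in H_q$; since $\conj_g\in\SO(V,q)$ preserves dimension and isotropy, it suffices to settle the claim on the representative $[q_l]$ of \eqref{def:tangent representative}. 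Here I would write $v=e+f$ with $e\in E$, $f\in E^\vee$, split $v\cdot q_l=e\wedge q_l+f\neg q_l$ via \eqref{multiplication} into its $\bigwedge^3E$ and $\bigwedge^1E$ parts, and read off that $f\neg q_l=0$ iff $f\in\langle f_{2l+1},\ldots,f_N\rangle$, while for $l\ge 2$ the $2$-form $q_l$ is indecomposable and $e\mapsto e\wedge q_l$ is injective on $E$ — splitting $E=\langle e_1,\ldots,e_{2l}\rangle\oplus\langle e_{2l+1},\ldots,e_N\rangle$ reduces this to the injectivity of $w\mapsto w\wedge q_l$ for a nondegenerate symplectic form on a space of dimension $2l\ge 4$. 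This gives $H_{q_l}=\langle f_{2l+1},\ldots,f_N\rangle\subset E^\vee$, isotropic of dimension $N-2l$, and equivariance transports it to all of $\Theta_{l,N}$. (For $l=1$ the form $q_1$ is decomposable, $H_{q_1}$ is maximal of dimension $N$, and $\Theta_{1,N}=\mathbb S_N^+$; the dimension count is thus for $l\ge 2$.)

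For the curve description I would take $[a]=[\mathbbm 1]$ by homogeneity, so $H_a=E^\vee$ and $\mathbb S_N^+$ is parametrized near $[\mathbbm 1]$ by the Pfaffian chart $\bigwedge^2E\ni A\mapsto[a_{H_A}]$ of Proposition~\ref{prop:from MFI to pure spinor}, with $H_A=\langle f_j+\textstyle\sum_kA_{kj}e_k\mid j=1{:}N\rangle_\mathbb C$. If $q\in\bigwedge^2E$ is the rank-$2l$ skew form corresponding to $[q]\in T_{[\mathbbm 1]}\mathbb S_N^+\cap\Theta_{l,N}$, the curve to use is the line $t\mapsto tq$ in the chart, i.e.\ $a(t):=a_{H_{tq}}$, for which $a(0)=\mathbbm 1$ and $a'(0)=q$, so $[q]$ is its tangent direction. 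A short computation gives $H_a\cap H_{a(t)}\cong\Ker(tq)$, of dimension $N-\rk(tq)=N-2l$ for $t\ne 0$, i.e.\ $d([\mathbbm 1],[a(t)])=l$ by Proposition~\ref{prop:hamming distance}. Since $l\ge 2$, for small $t\ne 0$ the point $[a-a(t)]=\big[\tfrac{a-a(t)}{t}\big]$ lies on $L([a],[a(t)])\setminus\{[a],[a(t)]\}$, hence in $\Sigma_{l,N}$, so Corollary~\ref{cor:fiber of Sigma_l} yields $H_t:=H_{(a-a(t))/t}=H_{a-a(t)}=H_a\cap H_{a(t)}\in\OG(N-2l,V)$. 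Finally, since $\tfrac{a-a(t)}{t}=-q+O(t)$ is analytic in $t$ and the rank $\rk\psi_{(a-a(t))/t}=\rk\psi_{a-a(t)}=2l$ is constant, the kernels $H_t$ vary continuously and extend to $t=0$ with value $\Ker\psi_{-q}=H_q$, which is the claimed $H_q=\lim_{t\to 0}H_t$.

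\textbf{Main obstacle.} Every step is routine save the final limit: kernels of a family of linear maps are only upper semicontinuous in dimension, so a priori $\lim_{t\to 0}H_t$ could be a proper subspace of $H_q$. What excludes this is precisely the constancy $\rk\psi_{a-a(t)}\equiv 2l$, and this is what forces the choice of the Pfaffian curve $t\mapsto a_{H_{tq}}$ rather than an arbitrary curve with direction $q$: along a generic such curve $d([a],[a(t)])$ — hence $\rk\psi_{a-a(t)}$ — may strictly exceed $l$ (already $A(t)=tq+t^2q'$ with $q'$ supported outside the range of $q$ produces this), and then $H_t$ has the wrong dimension even though its limit still contains $H_q$. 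So the heart of the matter is Corollary~\ref{cor:fiber of Sigma_l} — itself resting on Lemma~\ref{lemma:dim H_{w+e_l}}, which isolates why Hamming distance $1$ is special — applied to the Pfaffian curve, giving $H_{a-a(t)}=H_a\cap H_{a(t)}$ of dimension exactly $N-2l$ for all small $t\ne 0$.
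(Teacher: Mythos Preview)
Your argument is correct and follows the same skeleton the paper sketches in the remark itself: reduce to the representative by $\Spin$-equivariance of $q\mapsto H_q$, compute $H_{q_l}$ directly, and for the second claim pass through a curve in $\mathbb S_N^+$ with direction $q$. Your write-up is in fact more careful than the paper's. Two points worth noting.

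First, the paper's remark writes the constant distance along the curve as $m$ and then asserts $H_t\in\OG(N-2l,V)$; this only holds if $m=l$, which the paper leaves implicit. You make this explicit by \emph{constructing} the curve via the Pfaffian chart $t\mapsto a_{H_{tq}}$ and checking $H_{\mathbbm 1}\cap H_{a(t)}\cong\Ker(q)$, so that $d([\mathbbm 1],[a(t)])=l$ for all small $t\ne 0$. Your ``Main obstacle'' paragraph is the right diagnosis: along a generic curve with the same tangent direction one may have $d>l$, and then Corollary~\ref{cor:fiber of Sigma_l} gives $H_t$ of the wrong dimension. The paper exploits exactly this existence of a distance-$l$ curve in Lemma~\ref{lemma:orbits inclusions}(3), so your explicit construction is what is actually being used downstream.

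Second, a small numerical slip: you write $\rk\psi_{a-a(t)}=2l$, but $\dim V=2N$ and $\dim H_t=N-2l$, so the rank is $N+2l$. The point of the sentence---constancy of the rank along the analytic family $t\mapsto (a-a(t))/t$, hence continuity of the kernels and $\lim_{t\to 0}H_t=\Ker\psi_{-q}=H_q$---is unaffected. Your aside that the formula $\dim H_q=N-2l$ fails at $l=1$ (where $q_1$ is decomposable and $H_{q_1}$ is maximal) is also correct; the remark is tacitly for $l\ge 2$.
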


\indent From the previous arguments we deduce the following description of the secant variety of lines
\[ \sigma_2(\mathbb S_{N}^+)=\mathbb S_{N}^+ \sqcup \left( \bigsqcup_{l=2}^{\frac{N}{2}} \Sigma_{l,N}\right)\cup \left( \bigsqcup_{l=3}^{\frac{N}{2}} \Theta_{l,N} \right) \ ,\]
where the non-disjoint unions appears since we haven't proved $\Sigma_{l,N}\neq \Theta_{l,N}$ for $l\geq 3$ yet. Since we are interested in considering $l\geq 3$, we assume $N\geq 6$.

\begin{lemma}\label{lemma:orbits inclusions}\*
	\begin{enumerate}
		\item For any $l=2:\frac{N}{2}$ it holds $\Sigma_{l-1,N} \subset \overline{\Sigma_{l,N}}$. 
		\item For any $l=2:\frac{N}{2}$ it hols $\Theta_{l-1,N}\subset \overline{\Theta_{l,N}}$.
		\item For any $l=3:\frac{N}{2}$ it holds $\Theta_{l,N} \subset \overline{\Sigma_{l,N}}$.
	\end{enumerate}
\end{lemma}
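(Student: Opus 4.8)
The plan is to prove all three inclusions by the same mechanism. Each of $\Sigma_{l,N}$ and $\Theta_{l,N}$ is a single $\Spin_{2N}$--orbit (Theorems \ref{thm:secant orbit in sec2} and \ref{thm:tangent orbit in sec2}), so for an inclusion $\mathcal A\subseteq\overline{\mathcal B}$ it suffices to exhibit a curve $[p(t)]$, for $t$ in a punctured neighbourhood of $0$, with $[p(t)]\in\mathcal B$ and $\lim_{t\to0}[p(t)]=:[p(0)]\in\mathcal A$: then $\mathcal A=\Spin_{2N}\cdot[p(0)]\subseteq\overline{\mathcal B}$, since orbit closures are closed and $\Spin_{2N}$--invariant. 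In all three cases I would obtain such a curve by ``turning on a parameter'' inside the explicit representatives fixed in \eqref{def:representatives} and \eqref{def:tangent representative}, and I would identify the relevant orbit of $[p(t)]$ through the intrinsic characterisations already available: Hamming distance of the endpoints for the $\Sigma$'s (via the characterisation \eqref{def:secant orbits} together with Proposition \ref{prop:hamming distance}), and rank of the skew--symmetric matrix for the $\Theta$'s (via \eqref{def:tangent orbits}).

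For part (1) I would take $a(t):=\bold{e}_{[N-2(l-1)]}+t\,\bold{e}_{[N-2l]}$ and set $[p(t)]:=[\bold{e}_{[N]}+a(t)]$. Writing $a(t)=\bold{e}_{[N-2l]}\wedge\big(e_{N-2l+1}\wedge e_{N-2l+2}+t\,\mathbbm 1\big)$ and computing $H_{a(t)}=\Ker\psi_{a(t)}$ one finds
\[H_{a(t)}=\big\langle e_1,\dots,e_{N-2l},\ e_{N-2l+1}+t f_{N-2l+2},\ e_{N-2l+2}-t f_{N-2l+1},\ f_{N-2l+3},\dots,f_N\big\rangle_\mathbb C\in\OG^+(N,V),\]
so $[a(t)]\in\mathbb S_N^+$ by \eqref{correspondence pure spinors and MFI subspaces}, with $\dim\big(E\cap H_{a(t)}\big)=N-2l$ for $t\neq0$ and $H_{a(0)}=E_{N-2(l-1)}$. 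Hence $d([\bold{e}_{[N]}],[a(t)])=l$ for $t\neq0$ by Proposition \ref{prop:hamming distance}, giving $[p(t)]\in\Sigma_{l,N}$ by \eqref{def:secant orbits}, while $[p(0)]=[\bold{e}_{[N]}+\bold{e}_{[N-2(l-1)]}]$ is exactly the chosen representative of $\Sigma_{l-1,N}$.

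For part (2) the family is immediate: put $[p(t)]:=\big[\sum_{i=1}^{l-1}e_{2i-1}\wedge e_{2i}+t\,e_{2l-1}\wedge e_{2l}\big]\in T_{[\mathbbm 1]}\mathbb S_N^+\simeq\bigwedge^2E$. Its skew--symmetric matrix has rank $2l$ for $t\neq0$ and rank $2(l-1)$ at $t=0$, so $[p(t)]\in[T_{[\mathbbm1]}\mathbb S_N^+]_{2l}\subseteq\Theta_{l,N}$ for $t\neq0$ by \eqref{def:tangent orbits}, whereas $[p(0)]=[q_{l-1}]$ is the representative \eqref{def:tangent representative} of $\Theta_{l-1,N}$. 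For part (3) I would reuse the curve $c(t)$ of pure spinors built in the proof of Theorem \ref{thm:tangent orbit in sec2}: the one through $c(0)=\mathbbm 1$ with $c'(0)=q_l$ attached by Proposition \ref{prop:from MFI to pure spinor} to the rank--$2l$ matrix $t\,C_l$ of \eqref{standard skewsymm matrix}. Since $H_{c(t)}=\langle f_j+t\sum_k c_{kj}e_k\rangle_\mathbb C$ and the last $N-2l$ columns of $C_l$ vanish, one gets $E^\vee\cap H_{c(t)}=\langle f_{2l+1},\dots,f_N\rangle_\mathbb C$, hence $d([\mathbbm1],[c(t)])=l$ for $t\neq0$ by Proposition \ref{prop:hamming distance}. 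Setting $[p(t)]:=[c(t)-\mathbbm 1]=\big[\tfrac{c(t)-\mathbbm 1}{t}\big]$, this is of the form $[a+b]$ with $[a],[b]\in\mathbb S_N^+$ at Hamming distance $l$, so $[p(t)]\in\Sigma_{l,N}$ by \eqref{def:secant orbits}; and $\tfrac{c(t)-\mathbbm1}{t}\to c'(0)=q_l$, so $[p(0)]=[q_l]\in\Theta_{l,N}$.

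The conceptual heart is part (3): it says a ``rank--$2l$'' tangent line to $\mathbb S_N^+$ is a genuine limit of bisecant lines whose endpoints lie at Hamming distance exactly $l$, which is precisely what is needed to push $\overline{\Sigma_{l,N}}$ over $\Theta_{l,N}$. I do not expect any deep obstacle once Theorems \ref{thm:secant orbit in sec2}--\ref{thm:tangent orbit in sec2} are in hand; the only steps requiring a careful (but routine) check are: in part (1), verifying that $H_{a(t)}$ is maximal isotropic, lies in the correct connected component $\OG^+(N,V)$ for every $t$, and drops its intersection with $E$ by exactly $2$ in the limit; and in part (3), the elementary computation of $E^\vee\cap H_{c(t)}$ together with the identification $\lim_{t\to0}\tfrac{c(t)-\mathbbm1}{t}=q_l$ already recorded in the proof of Theorem \ref{thm:tangent orbit in sec2}. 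As a consistency check, for $l=2$ part (1) recovers $\mathbb S_N^+=\Sigma_{1,N}\subseteq\overline{\Sigma_{2,N}}$: there the limiting pair $(\bold{e}_{[N]},\bold{e}_{[N-2]})$ has Hamming distance $1$, so $[p(0)]=[\bold{e}_{[N]}+\bold{e}_{[N-2]}]\in\mathbb S_N^+$, matching $\Sigma_{1,N}:=\mathbb S_N^+$.
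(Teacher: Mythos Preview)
Your proposal is correct and follows essentially the same strategy as the paper: in each case you exhibit an explicit one--parameter family inside the larger orbit degenerating to the chosen representative of the smaller one, and conclude by $\Spin_{2N}$--invariance of orbit closures. Part (2) is identical to the paper's argument; in parts (1) and (3) your curves differ cosmetically from the paper's but encode the same degeneration (drop the intersection $E\cap H_{a(t)}$ by two in (1), and realise the tangent direction as a limit of secants at fixed Hamming distance in (3)). If anything, your part (3) is slightly cleaner: working with $[\,(c(t)-\mathbbm 1)/t\,]$ makes the limit $[q_l]$ transparent, whereas the paper's formulation mixes $[\,(a(t)+a(0))/t\,]$ and $[\,(a(t)-a(0))/t\,]$ in a way that only the latter actually converges to $[q]$.
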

\begin{proof}
	\begin{enumerate}
		\item For $\epsilon >0$ consider the sequence $[\bold{e}_{[N]}+a_\epsilon] \in \Sigma_{l,N}$ for $[a_\epsilon]\in \mathbb S_N^+$ defined by the subspaces $H_{a_\epsilon}= \left\langle e_1,\ldots, e_{N-2l}, g_{N-2l+1}(\epsilon), g_{N-2l+2}(\epsilon), g_{N-2l+3}, \ldots , g_{N} \right\rangle_\mathbb C\in \OG^+(N,V)$ where 
		\[ g_{N-2l+1}(\epsilon)=\frac{1}{\epsilon}f_{N-2l+1}+  e_{N-2l+2} \ \ \ , \ \ \ g_{N-2l+2}(\epsilon)=\frac{1}{\epsilon}f_{N-2l+2}-  e_{N-2l+1} \]
		and $g_h=f_h+\sum_{k=N-2l+3}^N\alpha_{kj}e_k$ as in \eqref{generators for H_a}. Then the sequence $[\bold{e}_{[N]}+a_\epsilon]$ has limit $[\bold{e}_{[N]}+a]$ where the pure spinor $[a]\in \mathbb S_{N}^+$ corresponds to the maximal fully isotropic subspace $H_a= \left\langle e_1,\ldots, e_{N-2l+2}, g_{N-2l+3}, \ldots , g_{N} \right\rangle_\mathbb C$: in particular, $[\bold{e}_{[N]}+a]\in \Sigma_{l-1,N}$. By reversing this argument, one can always look at a point in $\Sigma_{l-1,N}$ as limit of a sequence in $\Sigma_{l,N}$.
		\item The tangent points in $\Theta_{l-1,N}$ correspond to skew-symmetric matrices of size $N$ and rank $2l-2$, while points in $\Theta_{l,N}$ to rank--$2l$ skew-symmetric matrices of size $N$. As the former matrices lie in the closure of the latter ones, the thesis follows.
		\item Consider $[q]\in \Theta_{l,N} \subset \tau(\mathbb S_{N}^+)$: then by Remark \ref{rmk:subspaces for tangent points} $\dim H_{q}=N-2l$ and there exists a curve of pure spinors $\gamma(t)=\{ [a(t)] \ | \ t \in (-\epsilon, \epsilon)\}\subset \mathbb S_{N}^+$ with direction $\gamma'(0)=[q]$ such that (up to a smaller $\epsilon$) $d([a(t)],[a(0)])=l$ for any $t\in (0,\epsilon)$. In particular, for any $t>0$ it holds $\left[\frac{a(t)+a(0)}{t}\right]\in \Sigma_{l,N}$ and by definition $[q]=\left[\lim_{t\rightarrow 0}\frac{a(t)-a(0)}{t}\right]=\lim_{t\rightarrow 0}\left[\frac{a(t)+a(0)}{t}\right]\in \overline{\Sigma_{l,N}}$.
	\end{enumerate}
\end{proof}

\begin{lemma}\label{lemma:Tau_l neq Sigma_l}
	For any $N\geq 6$ and $l=3:\frac{N}{2}$, it holds $\Sigma_{l,N}\neq \Theta_{l,N}$.
\end{lemma}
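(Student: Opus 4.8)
The plan is to distinguish the two orbits $\Sigma_{l,N}$ and $\Theta_{l,N}$ by an invariant that is preserved under the $\Spin_{2N}$-action, namely the dimension of the fully isotropic subspace $H_p = \Ker(\psi_p)$ attached to a point $[p]$ via the map \eqref{psi_a}. By Corollary \ref{cor:fiber of Sigma_l} (together with Lemma \ref{lemma:dim H_{w+e_l}}), for any $[a+b] \in \Sigma_{l,N}$ with $l \geq 2$ we have $\dim H_{a+b} = N-2l$. On the tangent side, Remark \ref{rmk:subspaces for tangent points} already records that for $[q] \in \Theta_{l,N}$ the representative $[q_l] = \left[\sum_{i=1}^l e_{2i-1}\wedge e_{2i}\right]$ defines $H_{q_l} = \langle f_{2l+1},\ldots,f_N\rangle_{\mathbb C}$, which has dimension $N-2l$. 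So this particular invariant does \emph{not} separate the orbits, and a finer one is needed.

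The natural refinement is to pass from $H_p$ to its image $\im(\psi_p) \subset \bigwedge E$, or equivalently to look at whether the tangent space $T_{[p]}\sigma_2(\mathbb S_N^+)$, or some Clifford-apolarity rank, differs between the two cases. Concretely, I would compute the rank of the Clifford apolarity map (Definition \ref{def:clifford apolarity}) — or, more elementarily, the rank of $\psi_p : V \to \bigwedge E$ viewed now not merely by the dimension of its kernel but by the structure of its image — at the representatives $[\bold{e}_{[N]} + \bold{e}_{[N-2l]}]$ and $[q_l]$. For the secant representative, $\psi_{\bold{e}_{[N]}+\bold{e}_{[N-2l]}}$ sends the $2l$-dimensional complement of $E\cap E_{N-2l}$ onto vectors spread across several graded pieces of $\bigwedge E$ (it mixes a degree-$(N-2l+1)$ part coming from $\bold{e}_{[N]}$ with a degree-$(N-2l-1)$ part coming from $\bold{e}_{[N-2l]}$), whereas for $[q_l]$ the map $\psi_{q_l}$ — with $q_l$ a sum of $l$ decomposable $2$-vectors — has image concentrated in low degrees only. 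One then exhibits a $\Spin_{2N}$-invariant quantity (for instance the maximal degree in which $\im(\psi_p)$ meets $\bigwedge^k E$ nontrivially, or the rank of $\Phi$ applied suitably) that takes different values on the two representatives, and conclude the orbits are distinct.

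An even cleaner route, which I would try first, is dimensional: by Theorem \ref{thm:tangent orbit in sec2} the tangential variety is (for $N\geq 6$) a proper subvariety of $\sigma_2(\mathbb S_N^+)$, indeed a divisor by Zak's dichotomy, so $\dim \Theta_{l,N} \leq \dim\tau(\mathbb S_N^+) < \dim\sigma_2(\mathbb S_N^+)$; meanwhile $\overline{\Sigma_{N/2,N}} = \sigma_2^\circ(\mathbb S_N^+)$ is dense in $\sigma_2(\mathbb S_N^+)$, so $\dim\Sigma_{N/2,N} = \dim\sigma_2(\mathbb S_N^+) > \dim\Theta_{N/2,N}$, settling $l = N/2$ immediately; for $3 \leq l < N/2$ one chains this with Lemma \ref{lemma:orbits inclusions}(1),(3): since $\Theta_{l,N} \subset \overline{\Sigma_{l,N}}$ and $\Sigma_{l,N} \subsetneq \overline{\Sigma_{l+1,N}}$, a dimension count (comparing the codimension of $\tau$ inside $\sigma_2$ against the "depth" $l$) forces strict inequality $\dim\Theta_{l,N} < \dim\Sigma_{l,N}$, hence $\Theta_{l,N} \neq \Sigma_{l,N}$. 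The main obstacle is making the dimension comparison precise for intermediate $l$ without circularity — the orbit dimensions are only computed later in Sec. \ref{sec:orbit dimensions spinor} — so in practice the safest argument is the invariant-based one: find one $\Spin_{2N}$-invariant feature of $[p]$ (the structure of $H_p$ together with how $\langle p \rangle$ sits relative to $\bigwedge^{\geq k}E$, or the rank of the second-order data $T_{[p]}\sigma_2$) that genuinely separates a secant representative from a tangent representative, and verify it by direct computation on $[\bold{e}_{[N]}+\bold{e}_{[N-2l]}]$ versus $[q_l]$.
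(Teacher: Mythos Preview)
Your treatment of the case $l=\frac{N}{2}$ is correct and matches the paper: since $\tau(\mathbb S_N^+)\subsetneq\sigma_2(\mathbb S_N^+)$ for $N\geq 6$, the dense tangent and secant orbits cannot coincide.

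For $3\leq l<\frac{N}{2}$, however, your proposal does not close. The grading-based invariants you suggest (``the maximal degree in which $\im(\psi_p)$ meets $\bigwedge^k E$'') are \emph{not} $\Spin_{2N}$-invariant: the spin group acts on $\bigwedge E$ by Clifford multiplication, which mixes the exterior degrees, so comparing graded pieces at two chosen representatives says nothing about orbit membership. The Clifford-apolarity rank \emph{is} invariant, but you do not compute it, and you correctly flag the dimension argument as circular. So no working argument remains for intermediate $l$.

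The missing idea is a \emph{reduction to a smaller Spinor variety via the kernel}. Assume $\Sigma_{l,N}=\Theta_{l,N}$ for some $3\leq l\leq\frac{N}{2}-1$; the paper first uses Lemma~\ref{lemma:orbits inclusions} to descend to $l=3$, and then argues as follows. The representative $[q_3]\in\Theta_{3,N}$ has $H_{q_3}=\langle f_7,\ldots,f_N\rangle_{\mathbb C}$. If $[q_3]=[a+b]$ with $d([a],[b])=3$, then by Corollary~\ref{cor:fiber of Sigma_l} one has $H_a\cap H_b=H_{q_3}$, forcing $f_j\cdot a=f_j\cdot b=0$ for $j>6$; hence $a,b\in\bigwedge^{ev}\langle e_1,\ldots,e_6\rangle_{\mathbb C}$ are pure spinors in $\mathbb S_6^+$ at maximal Hamming distance. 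Thus the equality $[q_3]=[a+b]$ descends to $\Theta_{3,6}=\Sigma_{3,6}$, i.e.\ $\tau(\mathbb S_6^+)=\sigma_2(\mathbb S_6^+)$, contradicting the case already settled. (One can equally well skip the descent to $l=3$ and run the same kernel argument directly, reducing $\Theta_{l,N}=\Sigma_{l,N}$ to $\Theta_{l,2l}=\Sigma_{l,2l}$, which is the top case for $\mathbb S_{2l}^+$.) This is precisely the kind of structural use of $H_p$ that your first paragraph gestures at but then abandons; the point is not to extract a numerical invariant from $H_p$, but to use $H_p$ to factor the problem down to the perfect case $N=2l$.
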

\begin{proof}
	For $l=\frac{N}{2}$ the equality does not hold since for $N\geq 6$ one has $\tau(\mathbb S_{N}^+)\subsetneq \sigma_2(\mathbb S_{N}^+)$. \\
	By contradiction, assume that there exists $3\leq l \leq \frac{N}{2}-1$ such that the equality holds. Then Lemma \ref{lemma:orbits inclusions} implies $\Sigma_{i,N} = \Theta_{i,N}$ for any $3 \leq i \leq l$: in particular, $\Sigma_{3,N} = \Theta_{3,N}$. Since we want to deal with $3 \lneq \frac{N}{2}$, we assume $N\geq 7$. \\
	\indent Consider the tangent representative $[q_3] = [e_1\wedge e_2 + e_3\wedge e_4 + e_5\wedge e_6] \in \Theta_{3,N}$. It defines the subspace $H_{q_3}=\langle f_7,\ldots, f_N\rangle_{\mathbb C}$ of dimension $\dim H_{q_3}=N-6$. Set $E\cap E_{[6]}=\langle e_1,e_2,e_3,e_4,e_5,e_6\rangle_{\mathbb C}$. Since $[q_3]\in \Sigma_{3,N}$, there exist pure spinors $[a],[b] \in \mathbb S_{N}^+$ such that $[q_3]=[a+b]$ and $d([a],[b])=3$. Then, being kernels, one gets $\langle f_7,\ldots, f_N\rangle_{\mathbb C}=H_{q_3}=H_{a+b}=H_a\cap H_b$, where the last equality follows from Corollary \ref{cor:fiber of Sigma_l}. This implies that $a+b\in \bigwedge^{ev}(E\cap E_{[6]})$ and $[a],[b]\in \mathbb S_{6}^+$: in particular, the equality $[q_3]=[a+b]$ holds in $\Sigma_{3,6}$ for the representative $[q_3]\in \Theta_{3,6}$. But this means that $\Theta_{3,6}=\Sigma_{3,6}$, that is $\tau(\mathbb S_6^+)=\sigma_2(\mathbb S_{6}^+)$ which is a contradiction.
\end{proof}

\begin{teo}\label{thm:orbit partition of sec}
	For any $N\geq 6$, the poset of $\Spin_{2N}$--orbits in the secant variety of lines $\sigma_2(\mathbb S_{N}^+)$ is described by the graph in Figure \eqref{figure:graph}, where arrows denote the inclusion of an orbit into the closure of the other orbit. In particular, the orbits $\Theta_{\frac{N}{2},N}$ and $\Sigma_{\frac{N}{2},N}$ are the dense orbits of the tangential and secant variety respectively.
\end{teo}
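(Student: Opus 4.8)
The statement is essentially a synthesis of the results of this section, so the plan is an assembly. First I would pin down the vertices of the graph: combining Theorem~\ref{thm:secant orbit in sec2} and Theorem~\ref{thm:tangent orbit in sec2} with the decomposition $\sigma_2(\mathbb S_N^+)=\sigma_2^\circ(\mathbb S_N^+)\cup\tau(\mathbb S_N^+)$ shows that the $\Spin_{2N}$--orbits in $\sigma_2(\mathbb S_N^+)$ are exactly the $\Sigma_{l,N}$ and the $\Theta_{l,N}$ for $l=1,\ldots,\frac N2$. Then I would invoke the coincidences $\Sigma_{1,N}=\Theta_{1,N}=\mathbb S_N^+$ and $\Sigma_{2,N}=\Theta_{2,N}$ already recorded, and rule out any further identification: a point $[c]\in\Sigma_{l,N}$ has $\dim\Ker\psi_c=N-2l$ by \eqref{def:secant orbits} and Corollary~\ref{cor:fiber of Sigma_l}, while a point $[q]\in\Theta_{l,N}$ has $\dim\Ker\psi_q=N-2l$ by Remark~\ref{rmk:subspaces for tangent points}. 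Hence $\Sigma_{l,N}=\Sigma_{m,N}$ or $\Theta_{l,N}=\Theta_{m,N}$ forces $l=m$, and $\Sigma_{l,N}=\Theta_{m,N}$ forces $l=m$, which for $l\ge 3$ is excluded by Lemma~\ref{lemma:Tau_l neq Sigma_l}.

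Next I would establish the edges and prove they generate the whole poset. The three families of arrows are precisely the content of Lemma~\ref{lemma:orbits inclusions}, namely $\Sigma_{l-1,N}\subset\overline{\Sigma_{l,N}}$, $\Theta_{l-1,N}\subset\overline{\Theta_{l,N}}$ and $\Theta_{l,N}\subset\overline{\Sigma_{l,N}}$, read through the small-index identifications. The crux is then to compute each orbit closure as a union of orbits and to check that no further inclusion occurs, and for this I would use two ingredients. On one hand, the map $[x]\mapsto\dim\Ker\psi_x$, with $\psi_x$ the contraction of \eqref{psi_a} depending linearly on $x$, is upper semicontinuous on $\mathbb P(\bigwedge^{ev}E)$ and is constant equal to $N-2m$ on each of $\Sigma_{m,N}$ and $\Theta_{m,N}$; since $\Spin_{2N}$ is connected the relevant orbit closures are irreducible with the orbit dense in them, so any orbit contained in $\overline{\Sigma_{l,N}}$ or in $\overline{\Theta_{l,N}}$ must have index $\le l$. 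On the other hand $\tau(\mathbb S_N^+)$ is closed and $\Spin_{2N}$--invariant, so $\overline{\Theta_{l,N}}\subset\tau(\mathbb S_N^+)$, whereas for $l\ge 3$ the orbit $\Sigma_{l,N}$ is disjoint from $\tau(\mathbb S_N^+)$: a point of $\Sigma_{l,N}\cap\tau(\mathbb S_N^+)$ would lie in some $\Theta_{m,N}$, forcing $N-2l=N-2m$ and hence $\Sigma_{l,N}=\Theta_{l,N}$, against Lemma~\ref{lemma:Tau_l neq Sigma_l}. Together with Lemma~\ref{lemma:orbits inclusions} and transitivity of orbit-closure inclusion, this would pin down
\[
\overline{\Theta_{l,N}}=\bigsqcup_{m=1}^{l}\Theta_{m,N}\ ,\qquad
\overline{\Sigma_{l,N}}=\mathbb S_N^+\sqcup\bigsqcup_{m=2}^{l}\Sigma_{m,N}\sqcup\bigsqcup_{m=3}^{l}\Theta_{m,N}\ .
\]

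Reading off the covering relations of the partial order determined by these closures then reproduces exactly the graph of the statement. The density assertions follow at once: $\overline{\Theta_{\frac N2,N}}=\bigsqcup_{m}\Theta_{m,N}=\tau(\mathbb S_N^+)$ by Theorem~\ref{thm:tangent orbit in sec2}, and $\overline{\Sigma_{\frac N2,N}}\supseteq\bigsqcup_{m}\Sigma_{m,N}=\sigma_2^\circ(\mathbb S_N^+)$, whose closure is $\sigma_2(\mathbb S_N^+)$; hence $\Theta_{\frac N2,N}$ and $\Sigma_{\frac N2,N}$ are the dense orbits of the tangential and the secant varieties. I do not expect a conceptual obstacle here, since the geometric input is carried entirely by the preceding lemmas; the delicate part is purely the bookkeeping in the ``no further inclusions'' step --- in particular handling the low indices $l=1,2$, where tangent and secant orbits collapse, and then checking that the abstract Hasse diagram one obtains has precisely the drawn edges, with no spurious arrow (such as $\Sigma_{3,N}\to\Theta_{4,N}$) and none missing.
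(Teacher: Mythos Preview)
Your proposal is correct and takes essentially the same approach as the paper, which states the theorem as a direct synthesis of the preceding results (Theorems \ref{thm:secant orbit in sec2}, \ref{thm:tangent orbit in sec2}, Lemmas \ref{lemma:orbits inclusions}, \ref{lemma:Tau_l neq Sigma_l} and the coincidences $\Theta_{1,N}=\Sigma_{1,N}$, $\Theta_{2,N}=\Sigma_{2,N}$) without writing out a separate proof. Your treatment is in fact more explicit than the paper's, particularly in the bookkeeping step where you use upper semicontinuity of $\dim\Ker\psi_x$ and closedness of $\tau(\mathbb S_N^+)$ to rule out spurious inclusions.
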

\begin{figure}[H]
	\begin{center}
		{\small \begin{tikzpicture}[scale=2.3]
			
			\node(S) at (0,0.2){{$\mathbb S_{N}^+$}};
			\node(t2) at (0,0.6){{$\Theta_{2,N}=\Sigma_{2,N}$}};
			\node(t3) at (-0.6,1){{$\Theta_{3,N}$}};
			\node(t) at (-0.6,1.5){{$\vdots$}};
			\node(td) at (-0.6,2){$\Theta_{\frac{N}{2},N}$};
			
			\node(s3) at (0.6,1.2){{$\Sigma_{3,N}$}};
			\node(s) at (0.6,1.7){{$\vdots$}};
			\node(sd) at (0.6,2.2){{$\Sigma_{\frac{N}{2},N}$}};
			
			\path[font=\scriptsize,>= angle 90]
			(S) edge [->] node [left] {} (t2)
			(t2) edge [->] node [left] {} (t3)
			(t3) edge [->] node [left] {} (s3)
			(t3) edge [->] node [left] {} (t)
			(t) edge [->] node [left] {} (td)
			(t) edge [->] node [left] {} (s)
			(td) edge [->] node [left] {} (sd)
			(s3) edge [->] node [left] {} (s)
			(s) edge [->] node [left] {} (sd);
			\end{tikzpicture}}
	\end{center}
	\caption{Poset graph of $\Spin_{2N}$--orbits in $\sigma_2(\mathbb S_{N}^+)$.}
	\label{figure:graph}
\end{figure}
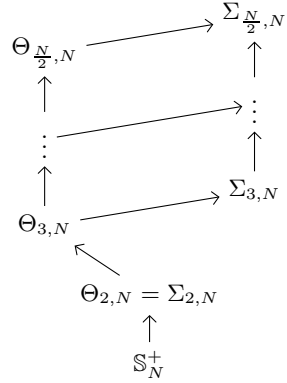

\section{Identifiability in $\sigma_2(\mathbb S_{N}^+)$}\label{sec:identifiability spinor}

\indent \indent From Sec.\ \ref{sec:preliminaries} we recall the preliminary notions of (un)identifiability and decomposition locus. We assume the setting in Remark \ref{rmk:setting}. 

\paragraph*{Unidentifiability and decomposition loci for $\Sigma_{2,N}$} The $6$--dimensional Spinor variety $\mathbb S_{4}^+\subset \mathbb P^7$ coincides with the $6$-dimensional quadric $\Q^6\subset \mathbb P^7$. In this case, the secant orbit $\Sigma_{2,4}$ is dense in $\sigma_2(\mathbb S_4^+)=\mathbb P^7$. Given $[q]\in \mathbb P^7\setminus \Q^6$, for any pair $([x],[y])\in Dec([q])$ in the decomposition locus it holds $d([x],[y])=2$, since $L([x],[y])\nsubseteq\Q^6$.

\begin{prop}\label{prop:decomposition locus S4}
	For any $[q]\in \mathbb P^7\setminus \Q^6$, the decomposition locus $Dec([q])$ is parametrized by the lines $\mathbb P^1\subset \mathbb P^7$ passing at $[q]$ and intersecting $\Q^6$ in two distinct points, i.e. 
	\[Dec([q])\simeq \mathbb P^6\setminus \Q^5 \ . \]
	In particular, the distance-$2$ orbit $\Sigma_{2,4}$ is unidentifiable.
\end{prop}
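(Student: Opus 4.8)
The plan is to work in the concrete model $\mathbb S_4^+ = \Q^6 \subset \mathbb P^7 = \mathbb P(\bigwedge^{ev}E)$ and exploit the fact that $\Q^6$, as a nondegenerate quadric, is an intersection of a single quadric. Fix $[q]\in\mathbb P^7\setminus\Q^6$. First I would observe that any decomposition $([x],[y])\in Dec([q])$ produces a line $L([x],[y])\ni [q]$ meeting $\Q^6$ in the two distinct points $[x],[y]$; conversely, because $\Q^6$ is cut out by one quadratic form $Q$, a line $\ell\subset\mathbb P^7$ through $[q]$ either lies in $\Q^6$ (impossible here, since $[q]\notin\Q^6$) or meets it in a scheme of length $2$. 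So $Dec([q])$ is in bijection with the set of lines through $[q]$ whose intersection with $\Q^6$ consists of two \emph{distinct} points. A line $\ell$ through $[q]$ is tangent to $\Q^6$ (i.e. meets it in a double point) precisely when $\ell\subset T_{[p]}\Q^6$ for the point of tangency $[p]$, equivalently when the restriction of $Q$ to $\ell$ has a double root; the lines through $[q]$ form a $\mathbb P^6$, and I would identify explicitly which ones are the "bad" (tangent) ones.

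The key computation is elementary conic geometry on each line. Choosing an affine chart, a line through $[q]$ can be parametrized as $[q+t\,v]$ for $v$ ranging over $\mathbb P(\mathbb C^8/\langle q\rangle)\cong\mathbb P^6$; the intersection with $\Q^6$ is governed by the quadratic equation $Q(q)+2t\,B(q,v)+t^2Q(v)=0$, where $B$ is the polar bilinear form of $Q$. Two distinct intersection points occur exactly when this quadratic has two distinct roots in $\mathbb P^1$ (including the root "at infinity" $t=\infty$ when $Q(v)=0$ but $B(q,v)\neq 0$), which fails precisely when the discriminant $B(q,v)^2 - Q(q)Q(v)$ vanishes. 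Since $Q(q)\neq 0$, after rescaling $v$ this discriminant condition defines a smooth quadric hypersurface in the $\mathbb P^6=\mathbb P(\mathbb C^8/\langle q\rangle)$ of directions — call it $\Q^5$. Hence the locus of lines through $[q]$ meeting $\Q^6$ in two distinct points is exactly $\mathbb P^6\setminus\Q^5$, giving $Dec([q])\simeq\mathbb P^6\setminus\Q^5$ as claimed.

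For the final sentence, unidentifiability of $\Sigma_{2,4}$, it suffices to note that $\mathbb P^6\setminus\Q^5$ is visibly not a single point (it has dimension $6$), so every $[q]\in\mathbb P^7\setminus\Q^6$ — in particular every point of the orbit $\Sigma_{2,4}$, which by the orbit analysis equals $\mathbb P^7\setminus\mathbb S_4^+ = \mathbb P^7\setminus\Q^6$ — admits infinitely many decompositions. Since unidentifiability of an orbit is detected on a single representative, $\Sigma_{2,4}$ is unidentifiable.

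The only point requiring care is the bookkeeping of degenerate cases in the parametrization of lines through $[q]$: a line should be counted once, the root at infinity must be handled (directions $v$ with $Q(v)=0$), and one must check that the tangent lines are genuinely the discriminant locus and that this locus is a \emph{smooth} quadric $\Q^5$ (nondegeneracy of $Q$ on the quotient, using $Q(q)\neq 0$). I expect this to be the main — though still routine — obstacle; everything else is a direct translation between decompositions, secant lines, and their intersection with the quadric.
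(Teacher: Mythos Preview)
Your proposal is correct and follows essentially the same approach as the paper: both identify $Dec([q])$ with the set of lines through $[q]$ meeting $\Q^6$ in two distinct points, note that lines through $[q]$ form a $\mathbb P^6$, and observe that the tangent lines cut out a quadric $\Q^5$ therein. The paper's proof is terser, simply asserting that the tangent lines form a $\Q^5$ without the explicit discriminant computation you provide; your version is a faithful fleshing-out of the same argument.
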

\begin{proof} 
	The set of lines $\mathbb P^1\subset \mathbb P^7$ passing through $[q]$ and intersecting $\Q^6$ in two distinct points is isomorphic to $\mathbb P^6\setminus \Q^5$: indeed, points in $\Q^5\subset \mathbb P^6$ correspond to lines $\mathbb P^1\subset \mathbb P^7$ which are tangent to $\Q^6$. A pair $([x],[y])\in (\Q^6)^{\times 2}$ gives a decomposition for $[q]=[x+y]$ if and only if the line $L([x],[y])\subset\mathbb P^7$ contains $[q]$ and intersects $\Q^6$ exactly in the distinct points $[x],[y]$. Thus the thesis follows.
\end{proof}

\begin{es}\label{unidentifiable in S_4}
	For $\mathbb S_4^+$, consider the representative $[\bold{e}_{[4]}+\mathbbm{1}]$ of the dense orbit $\Sigma_{2,4}$: an alternative decomposition is \[\left[\bold{e}_{[4]} + \mathbbm{1}\right] \ = \ [(\bold{e}_{[4]} - \bold{e}_{[2]})+ (\bold{e}_{[2]} + \mathbbm{1})] \]
	where both summands are pure spinors since $d([\bold{e}_{[4]}],[\bold{e}_{[2]}])=d([\bold{e}_{[2]}],[\mathbbm{1}])=1$.
\end{es}

\begin{prop}\label{prop:contact locus for Sigma2}
	For any $N\geq 6$, let $[q]\in \Sigma_{2,N}$ defining the $(N-4)$--dimensional isotropic subspace $H_q\subset \mathbb C^{2N}$ via $\psi_q$ in \eqref{psi_a}. Let $\Q^6\subset \mathbb P^7=\mathbb P(H_q^\perp/H_q)$ be the quadric in the (projectivization of the) orthogonal quotient of $H_q$. For any $[\tilde{a}]\in \Q^6$, we denote by $[a]=[\tilde{a}\wedge H_q]$ the pure spinor in $\mathbb S_N^+$. Then the decomposition locus of $[q]$ is $6$--dimensional and isomorphic to the open set
	\[Dec([q])\simeq \Q^6\setminus \left\{ [\tilde{a}] \in \Q^6 \ | \ L([a],[q]) \subset \mathbb S_N^+ \right\} \ . \]
	In particular, the distance-$2$ orbit $\Sigma_{2,N}$ is unidentifiable.
\end{prop}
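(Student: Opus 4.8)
The strategy is to reduce the general case $N \geq 6$ to the already-understood case $N=4$ (Proposition \ref{prop:decomposition locus S4}) by localizing the problem inside the orthogonal quotient $H_q^\perp/H_q \simeq \mathbb{C}^8$. First I would establish the structural fact that for $[q] \in \Sigma_{2,N}$, any decomposition $([a],[b]) \in Dec([q])$ satisfies $H_a \cap H_b = H_q$ (this is Corollary \ref{cor:fiber of Sigma_l} with $l=2$) and $d([a],[b])=2$. From $H_a, H_b \supset H_q$ with $H_q$ fully isotropic of dimension $N-4$, both $H_a/H_q$ and $H_b/H_q$ descend to maximal isotropic subspaces of the $8$-dimensional quadratic space $H_q^\perp/H_q$, and their intersection is trivial; thus the induced pure spinors $[\bar a],[\bar b]$ lie on a bisecant line to the quadric $\mathbb{Q}^6 \subset \mathbb{P}^7 = \mathbb{P}(H_q^\perp/H_q)$ (identifying $\mathbb{S}_4^+ = \mathbb{Q}^6$). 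Conversely, the correspondence $[\tilde a] \mapsto [a] := [\tilde a \wedge H_q]$ realizes $\mathbb{Q}^6$ inside $\mathbb{S}_N^+$, and I would check that $[q]$ lies on the span of $[a],[b]$ precisely when $[\tilde a \wedge H_q + \tilde b \wedge H_q] = [q]$ holds, i.e. when $([\tilde a],[\tilde b])$ is a decomposition of the image of $[q]$ in $\mathbb{P}^7$.

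**Key steps.** (1) Show the map $[\tilde a] \mapsto [\tilde a \wedge H_q]$ is a well-defined closed embedding $\mathbb{Q}^6 \hookrightarrow \mathbb{S}_N^+$ whose image consists exactly of the pure spinors $[a]$ with $H_a \supset H_q$ — this is a direct computation with the Clifford multiplication \eqref{multiplication}, using that $v \cdot (\tilde a \wedge H_q) = 0$ for $v \in H_q$ and that the remaining conditions reduce to $\tilde a$ being a pure spinor of the $8$-dimensional space. (2) Identify $[q]$, as an element of $\mathbb{P}(\bigwedge^{ev}E)$, with a point of $\mathbb{P}^7 \setminus \mathbb{Q}^6$ inside $\mathbb{P}(\bigwedge^{ev}(H_q^\perp/H_q))$ wedged with (a generator of) $\bigwedge^\bullet H_q$; this uses $H_q = \Ker \psi_q$ to pin down that $q$ genuinely involves all of $H_q$. (3) Transport Proposition \ref{prop:decomposition locus S4}: decompositions of $[q]$ in $\mathbb{S}_N^+$ biject with decompositions of the image point in $\mathbb{Q}^6$, hence with $\mathbb{P}^6 \setminus \mathbb{Q}^5$, or equivalently with the open subset of $\mathbb{Q}^6$ described in the statement, the excluded locus being exactly those $[\tilde a]$ for which the line $L([a],[q])$ falls into $\mathbb{S}_N^+$ (corresponding to the tangent lines to $\mathbb{Q}^6$). (4) Since the fiber $Dec([q])$ is a nonempty open subset of $\mathbb{Q}^6$, it is $6$-dimensional and certainly not a singleton, giving unidentifiability of $\Sigma_{2,N}$.

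**Main obstacle.** The delicate point is step (2)–(3): one must verify that the bijection between bisecant lines to $\mathbb{S}_N^+$ through $[q]$ and bisecant lines to $\mathbb{Q}^6$ through the image of $[q]$ is exact — that no decomposition of $[q]$ in $\mathbb{S}_N^+$ ``escapes'' the quotient $H_q^\perp/H_q$. This is precisely where Corollary \ref{cor:fiber of Sigma_l} is essential: it forces $H_a \cap H_b = H_q$ for every decomposition, which in turn forces both $a$ and $b$ (after scaling) to live in $\bigwedge^{ev}(\langle g \rangle)$ where... more precisely, to factor as $\tilde a \wedge (\text{gen.\ of } \bigwedge^{top} H_q)$. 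Pinning down this factorization — that $H_a \supseteq H_q$ implies $a \in \bigwedge^\bullet(H_q) \wedge \bigwedge^\bullet(\text{complement})$ with the $H_q$-part being the full top exterior power — requires a careful but routine argument using Proposition \ref{prop:from MFI to pure spinor} (choosing coordinates so that $H_q = \langle f_{5}, \dots, f_N\rangle$ up to $\Spin^q(V)$, as in Remark \ref{rmk:subspaces for tangent points}). Once this factorization is secured, the rest is a faithful translation of the $N=4$ picture, and the claimed description of $Dec([q])$ together with unidentifiability follow immediately.
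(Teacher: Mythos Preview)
Your proposal is correct and follows essentially the same approach as the paper: reduce to the $8$--dimensional quotient $H_q^\perp/H_q$ via Corollary \ref{cor:fiber of Sigma_l}, identify the induced pure spinors in $\mathbb S_4^+\simeq \Q^6$, and read off the decomposition locus there. The only cosmetic difference is that the paper, rather than explicitly transporting Proposition \ref{prop:decomposition locus S4}, argues the converse direction directly via B\'ezout (since $\mathbb S_N^+$ is an intersection of quadrics, a line through $[a]$ and $[q]$ not contained in $\mathbb S_N^+$ meets it in exactly one further point $[b]$), which is exactly the same mechanism underlying your step (3).
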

\begin{proof}
	Fix $[q]\in \Sigma_{2,N}$. For any $([a],[b])\in Dec([q])$, the pure spinors $[a],[b]$ have Hamming distance $2$ and define two maximal fully isotropic subspaces $H_a,H_b$ such that $H_a\cap H_b=H_q$. In particular, $\dim (H_a\cap H_b)=N-4$ and $\dim \left(\frac{(H_a\cap H_b)^\perp}{H_a\cap H_b}\right)=8$. Thus, in the orthogonal quotient space $W:=H_q^\perp/H_q\simeq \mathbb C^8$, they give the $4$--dimensional (maximal) isotropic subspaces $H_a/H_q, H_b/H_q$ which intersect trivially, hence they correspond to two pure spinors $[\tilde a],[\tilde b]\in \mathbb S_4^+\simeq \Q^6$ such that $d([\tilde a],[\tilde b])=2$: in particular, the line $L([\tilde a],[\tilde b])\nsubseteq \Q^6$ as well as $L([a],[b])=L([a],[q])\nsubseteq \mathbb S_N^+$. \\
	\indent On the other hand, start from a pure spinor $[\tilde a]\in \Q^6$ (look at the quadric as the Spinor variety $\mathbb S_4^+$ constructed from $W$) and consider the lifting of its corresponding subspace $\tilde H_{\tilde a}\in \OG^+(4,W)$ to the subspace $H:=\langle \tilde H_{\tilde a},H_q\rangle_{\mathbb C}\in \OG^+(N,V)$, which corresponds to the pure spinor $[a]=[\tilde{a}\wedge H_q]\in \mathbb S_{N}^+$. If $[\tilde a]\in \Q^6$ is such that $L([a],[q])\nsubseteq \mathbb S_N^+$, then by Bézout ($\mathbb S_N^+$ is intersection of quadrics) there exists a unique $[b]\in \mathbb S_{N}^+$ such that $L([a],[q])\cap \mathbb S_{N}^+=\{[a],[b]\}$.\\
	We conclude that each $[\tilde a] \in \Q^6$ such that $L([a],[q])\nsubseteq \mathbb S_N^+$ corresponds to a unique pair $([a],[b])\in Dec([q])$ in the decomposition locus of $[q]$, hence the thesis.
\end{proof}

\paragraph*{Identifiability of orbits $\Sigma_{l,N}$ for $l\geq 3$.} We show the identifiability of the secant orbits $\Sigma_{l,N}$ for $l\geq 3$ via an inductive argument, based on the injectivity of a wedge-multiplication map. The {\em base case} of the induction is given by the identifiability of the dense orbit $\Sigma_{\frac{N}{2},N}$, which we prove via Clifford apolarity introduced in Sec. \ref{sec:clifford apolarity}.

\begin{obs*}
	The case of the $15$--dimensional Spinor variety $\mathbb S_6^+\subset \mathbb P^{31}$ was already known \cite[Example 5]{AR03} as an example of {\em variety with one apparent double point} (OADP variety), namely a $n$--dimensional variety $X\subset \mathbb P^{2n+1}$ such that through a general point of $\mathbb P^{2n+1}$ there passes a unique secant line to $X$ \cite[Definition 3]{AR03}.
\end{obs*}

\begin{lemma}\label{lemma:generic identifiability of sec2}
	For any $N\geq 6$, the dense secant orbit $\Sigma_{\frac{N}{2},N}$ is identifiable.
\end{lemma}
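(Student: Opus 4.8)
The plan is to apply the Clifford apolarity of Section~\ref{sec:clifford apolarity} to a representative of the dense orbit and let it pin down the two summands. By Theorem~\ref{thm:secant orbit in sec2} we may take $[q] := [\bold{e}_{[N]} + \mathbbm{1}] \in \Sigma_{\frac{N}{2},N}$; it lies on the bisecant line $L([\bold{e}_{[N]}],[\mathbbm{1}])$ and (as we check below, $\psi_q$ is injective, so $H_q = 0 \neq N$) it is not a pure spinor, hence its $\mathbb S_N^+$--rank equals exactly $2$. I would apply the nonabelian apolarity of Propositions~\ref{prop:nonabelian}--\ref{prop:reduced-nonabelian} with $\cal E = \cal U^\vee$ and $\cal L = \cal O(1)$, so that $\cal E^\vee \otimes \cal L = \cal U(1)$ and, via the Borel--Weil isomorphisms recalled in Section~\ref{sec:clifford apolarity}, the apolarity map is $A_q \colon H^0(\mathbb S_N^+, \cal U^\vee) \simeq V^\vee \to H^0(\mathbb S_N^+, \cal U(1))^\vee \simeq \bigwedge^{od}E$. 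By Theorem~\ref{thm:uniqueness of contraction map for spin reps} and Schur's lemma, under the self--duality $V \simeq V^\vee$ induced by $q$ the map $A_q$ is, up to a nonzero scalar, the Clifford map $\psi_q \colon v \mapsto v\cdot q$ of \eqref{psi_a}.

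First I would record that $\psi_q$ is injective: $e_i\cdot q = e_i$ and $f_i\cdot q = c_i\,\bold{e}_{[N]\setminus\{i\}}$ with $c_i \neq 0$, so $\Ker A_q = H_q = 0$ and $\im A_q = \bigwedge^1 E \oplus \bigwedge^{N-1}E \subset \bigwedge^{od}E$, of dimension $2N = 2\,\rk\cal U^\vee$. Hence Proposition~\ref{prop:reduced-nonabelian} applies to every reduced length--$2$ decomposition $Z = \{[a],[b]\}$ of $[q]$ (each such $Z$ is minimal since $[q]$ has $\mathbb S_N^+$--rank $2$), giving $Z \subseteq \cal Z(\Ker A_q) \cap \cal Z(\im A_q^\perp)$. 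Since $\Ker A_q = 0$ imposes no vanishing, everything is encoded in the common zero locus $\cal Z(\im A_q^\perp) \subseteq \mathbb S_N^+$.

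The core computation is the determination of $\cal Z(\im A_q^\perp)$. With respect to the scalar product of Remark~\ref{rmk on clifford scalar product}, which is block--diagonal in the exterior degree, one has $\im A_q^\perp = (V\cdot q)^\perp = \bigoplus_{\substack{k\ \mathrm{odd}\\ 3 \le k \le N-3}} \bigwedge^k E^\vee$, i.e.\ the spinors of $\bigwedge^{od}E^\vee$ whose components in degrees $1$ and $N-1$ vanish. By Lemma~\ref{lemma:vanishing of global sections}, $\cal Z(\im A_q^\perp) = \mathbb S_N^+ \cap \bigcap_{f \in \im A_q^\perp}(V\cdot f)^\perp = \mathbb S_N^+ \cap (V\cdot \im A_q^\perp)^\perp$; using $E^\vee \wedge \bigwedge^k E^\vee = \bigwedge^{k+1}E^\vee$ and $E\,\neg\,\bigwedge^k E^\vee = \bigwedge^{k-1}E^\vee$ for $0 < k < N$, it follows that $V\cdot \im A_q^\perp = \bigoplus_{\substack{j\ \mathrm{even}\\ 2 \le j \le N-2}} \bigwedge^j E^\vee$, whose annihilator in $\bigwedge^{ev}E$ is $\bigwedge^0 E \oplus \bigwedge^N E = \langle \mathbbm{1}, \bold{e}_{[N]}\rangle_\mathbb C$. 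Therefore $\cal Z(\im A_q^\perp) = \mathbb S_N^+ \cap L([\mathbbm{1}],[\bold{e}_{[N]}])$; since $d([\mathbbm{1}],[\bold{e}_{[N]}]) = \frac{N}{2} \ge 3 \ge 2$ the line is not contained in $\mathbb S_N^+$, so (recall $\mathbb S_N^+$ is cut out by quadrics) Bézout forces $\cal Z(\im A_q^\perp) = \{[\mathbbm{1}],[\bold{e}_{[N]}]\}$.

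Putting the pieces together: every reduced length--$2$ decomposition $Z$ of $[q]$ satisfies $Z \subseteq \{[\mathbbm{1}],[\bold{e}_{[N]}]\}$ and consists of two distinct points, hence $Z = \{[\mathbbm{1}],[\bold{e}_{[N]}]\}$; so $[q]$, and therefore the whole orbit $\Sigma_{\frac{N}{2},N}$, is identifiable. The one genuinely delicate point is the computation of $V\cdot(V\cdot q)^\perp$ and of its annihilator inside $\bigwedge^{ev}E$: it is exactly here that the choice of the bundle $\cal U^\vee$ proves crucial, since it makes $A_q$ injective and hence the apolarity bound of Proposition~\ref{prop:reduced-nonabelian} sharp; the rest is routine bookkeeping with Clifford multiplication, up to the harmless nonzero constants $c_i$.
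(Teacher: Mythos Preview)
Your proof is correct and takes essentially the same approach as the paper's: both apply Proposition~\ref{prop:reduced-nonabelian} to the representative $[\bold{e}_{[N]}+\mathbbm{1}]$, identify the relevant space of sections of $\cal U(1)$ as $\bigoplus_{3\le k\le N-3}\bigwedge^kE^\vee$, and show via Lemma~\ref{lemma:vanishing of global sections} that its common zero locus is $\{[\mathbbm{1}],[\bold{e}_{[N]}]\}$. The only cosmetic difference is that you choose $\cal E=\cal U^\vee$ and read the constraint off $\im A_q^\perp$, whereas the paper chooses $\cal E=\cal U(1)$ and reads it off $\Ker\Phi_q$; since $\psi_q$ and $\Phi_q$ are transpose to one another these two subspaces of $\bigwedge^{od}E^\vee$ coincide.
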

\begin{proof}
	We prove that the representative $[\bold{e}_{[N]}+\mathbbm{1}]\in \Sigma_{\frac{N}{2},N}$ is identifiable. In the notation of Sec. \ref{sec:clifford apolarity}, consider the map $\Phi_{\bold{e}_{[N]}+\mathbbm{1}}:H^0(\mathbb S_{N}^+,\cal U(1))\rightarrow H^0(\mathbb S_{N}^+,\cal U^\vee)^\vee$ corresponding to the Clifford apolarity \eqref{Clifford apolarity} 
	\[ \begin{matrix}
	\Phi_{\bold{e}_{[N]}+\mathbbm{1}}: &\bigwedge^{od}E^\vee & \rightarrow & E\oplus E^\vee\\
	& f & \mapsto & C_{\bold{e}_{[N]}+\mathbbm{1}}(f)_{|_E}+ C_{f}(\bold{e}_{[N]}+\mathbbm{1})_{|_{E^\vee}} 
	\end{matrix} \ . \]
	Since $\rk \Phi_{\bold{e}_{[N]}+\mathbbm{1}}=2N=2 \rk \cal U(1)$, from Proposition \ref{prop:reduced-nonabelian} it is enough to prove that the common zero locus of $\Ker(\Phi_{\bold{e}_{[N]}+\mathbbm{1}})$ is $\cal Z\left( \Ker(\Phi_{\bold{e}_{[N]}+\mathbbm{1}})\right)=\{[\bold{e}_{[N]}],[\mathbbm{1}]\}$. \\
	Given $f=\sum_{k=0}^{\frac{N-2}{2}}\sum_{I \in \binom{[N]}{2k+1}}c_I\bold{f}_I\in \bigwedge^{od}E^\vee$, one gets 
	\[\Phi_{\bold{e}_{[N]}+\mathbbm{1}}(f) = \big( f\cdot \bold{e}_{[N]} + f\cdot \mathbbm{1}\big)_{|_E} + \big( \bold{e}_{[N]}\cdot f + \mathbbm{1}\cdot f\big)_{|_{E^\vee}}= \big(f\cdot \bold{e}_{[N]}\big)_{|_E} + f_{|_{E^\vee}} \ , \]
	hence
	\[ \Ker(\Phi_{\bold{e}_{[N]}+\mathbbm{1}})= \bigwedge^{3}E^\vee \oplus \ldots \oplus \bigwedge^{N-3}E^\vee \ .\]
	\indent Consider $a=\sum_{s=0}^{\frac{N}{2}}\sum_{J \in \binom{[N]}{2s}}\alpha_{J}\bold{e}_J\in \bigwedge^{ev}E$ such that $[a]\in \cal Z\left(\Ker(\Phi_{\bold{e}_{[N]}+\mathbbm{1}})\right)$. From the vanishing of global sections in Lemma \ref{lemma:vanishing of global sections}, we know that $\langle V\cdot f,a\rangle=0$ for any $f \in \Ker(\Phi_{\bold{e}_{[N]}+\mathbbm{1}})$. In particular, for any index subset $I\subset [N]$ of odd cardinality from $3$ to $N-3$ (so that $\bold{f}_I$ lies in the kernel), and for any basis vector $e_\lambda \in E$, we get 
	\begin{align}\label{vanishing with e's}
	0 & = \langle e_\lambda \cdot \bold{f}_I, a\rangle = \langle \bold{f}_I, e_\lambda \wedge a \rangle  = \sum_{s=0}^{\frac{N}{2}}\sum_{J \in \binom{[N]}{2s}}\alpha_J\langle \bold{f}_I,e_\lambda\wedge \bold{e}_J\rangle \\
	& = \sum_{s=0}^{\frac{N}{2}} \sum_{J \in \binom{[N]}{2s} \ : \ J\not \owns \lambda} (-1)^{pos(\lambda,J\cup\{\lambda\})+1} \alpha_J\langle \bold{f}_I,\bold{e}_{J\cup\{\lambda\}}\rangle = (-1)^{pos(\lambda, I)+1} \alpha_{I\setminus \{\lambda\}}	 \ ,
	\end{align}
	\noindent where $pos(\lambda,I)$ denotes the position of $\lambda$ in the ordered subset $I$, and the last equality follows from Remark \ref{rmk on clifford scalar product}. Thus in $a$ there is no summand indexed by a subset $J$ such that $J\subset I$ and $|J|=|I|-1$ for any subset $I$ with $|I|=3,5, \ldots, N-3$. On the other hand, for any subset $I\subset [N]$ of odd cardinality from $3$ to $N-3$ (i.e. $\bold{f}_I\in \Ker(\Phi_{\bold{e}_{[N]}+\mathbbm{1}})$) and for any basis vector $f_\nu \in E^\vee$ such that $\nu \notin I$, it holds
	\begin{align}\label{vanishing with f's}
	0 & = \langle f_\nu \cdot \bold{f}_I, a\rangle = \langle f_\nu\wedge \bold{f}_I, a \rangle  = (-1)^{pos(\nu, I\cup \{\nu\})+1}\sum_{s=0}^{\frac{n+1}{2}}\sum_{J \in \binom{[N]}{2s}}\alpha_J\langle \bold{f}_{I\cup \{\nu\}},\bold{e}_J\rangle \\
	& = (-1)^{pos(\nu, I\cup \{\nu\})+1} \alpha_{I\cup \{\nu\}}	 \ ,
	\end{align}
	implying that $a$ has no summand indexed by a subset $J$ such that $4 \leq |J|\leq N-2$. We deduce that every $[a]\in \cal Z\left( \Ker(\Phi_{\bold{e}_{[N]}+\mathbbm{1}}) \right)$ is such that $a=\alpha + \beta \bold{e}_{[N]} \in \mathbb C \oplus \bigwedge^{N}E$. Since $d(\mathbbm{1},\bold{e}_{[N]})=\frac{N}{2}$, the line $L([\bold{e}_{[N]}],[\mathbbm{1}])$ does not lie in $\mathbb S_{N}^+$: as Spinor varieties are intersections of quadrics, by Bézout the only points of the form $[\alpha\mathbbm{1} + \beta\bold{e}_{[N]}]$ being pure spinors are for either $\alpha=0$ or $\beta=0$, that is $\cal Z\left( \Ker(\Phi_{\bold{e}_{[N]}+\mathbbm{1}})\right)=\{[\bold{e}_{[N]}],[\mathbbm{1}]\}$.
\end{proof}

\begin{teo}\label{thm:identifiable secant orbits}
	For any $N\geq 6$ and $l\geq 3$, the secant orbit $\Sigma_{l,N}\subset \sigma_2(\mathbb S_{N}^+)$ is identifiable.
\end{teo}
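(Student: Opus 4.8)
The strategy is induction on $l$ going \emph{downward} from the dense orbit $\Sigma_{\frac{N}{2},N}$, whose identifiability is the base case already established in Lemma~\ref{lemma:generic identifiability of sec2} via Clifford apolarity. For the inductive step, suppose $\Sigma_{l+1,N}$ is identifiable and consider a point $[q]\in\Sigma_{l,N}$ with decomposition $[q]=[a+b]$, $d([a],[b])=l\ge 3$. By Corollary~\ref{cor:fiber of Sigma_l} the subspace $H_q=H_a\cap H_b$ has dimension $N-2l$, and $H_a,H_b$ induce maximal isotropic subspaces $H_a/H_q,H_b/H_q$ in the orthogonal quotient $W:=H_q^\perp/H_q\simeq\mathbb C^{4l}$, meeting trivially; so $[q]$ "is" a point of the dense orbit $\Sigma_{l,2l}$ inside the Spinor variety $\mathbb S_{2l}^+\subset\mathbb P(\bigwedge^{ev}W')$ constructed from $W$. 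Since $l\ge 3$ means $2l\ge 6$, the base-case Lemma~\ref{lemma:generic identifiability of sec2} applies in $\mathbb S_{2l}^+$: any two decompositions of $[q]$ in $\mathbb S_N^+$ push down to decompositions in $\mathbb S_{2l}^+$, which must agree, and then one lifts back uniquely via $H\mapsto\langle\tilde H,H_q\rangle$. Strictly speaking this already gives the result without needing the inductive hypothesis — the key reduction is to the \emph{perfect} case $\Sigma_{l,2l}\subset\sigma_2(\mathbb S_{2l}^+)$ — but I would phrase it inductively if it streamlines the bookkeeping of the lifting correspondence.

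The heart of the argument is therefore the \textbf{reduction to a smaller Spinor variety}. I would make this precise as follows. Given $[q]=[a+b]\in\Sigma_{l,N}$, by $\Spin_{2N}$-equivariance (Proposition~\ref{prop:O_l is orbit}, Lemma~\ref{lemma:transitivity on lines}) one may normalize to the representative $[\bold e_{[N]}+\bold e_{[N-2l]}]$, so that $H_q=E\cap E_{N-2l}=\langle e_1,\dots,e_{N-2l}\rangle_\mathbb C$ by Lemma~\ref{lemma:dim H_{w+e_l}}. Then every pure spinor $[a']\in\mathbb S_N^+$ with $H_q\subseteq H_{a'}$ factors as $a'=\bold e_{[N-2l]}\wedge\tilde a'$ for a pure spinor $[\tilde a']$ of the rank-$2l$ Clifford algebra on $W=\langle e_{N-2l+1},\dots,e_N,f_{N-2l+1},\dots,f_N\rangle_\mathbb C$, and $d([a'],[b'])$ in $\mathbb S_N^+$ equals $d([\tilde a'],[\tilde b'])$ in $\mathbb S_{2l}^+$ by Proposition~\ref{prop:hamming distance}. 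Any decomposition $([a_1],[b_1])\in Dec([q])$ has $H_{a_1}\cap H_{b_1}=H_q$ (Corollary~\ref{cor:fiber of Sigma_l}), hence $H_q\subseteq H_{a_1}$ and $H_q\subseteq H_{b_1}$, so it descends to a decomposition of $[\bold e_{[N]}+\mathbbm 1]$ (the image of $[q]$ in $W$-coordinates) in the dense orbit $\Sigma_{l,2l}$. By Lemma~\ref{lemma:generic identifiability of sec2} that decomposition is unique; the lifting map $[\tilde H]\mapsto\langle\bold e_{[N-2l]},\tilde H\rangle_\mathbb C$ is injective on subspaces (it recovers $H_q$ as the common part), so $([a_1],[b_1])$ is uniquely determined. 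Hence $Dec([q])$ is a singleton and $\Sigma_{l,N}$ is identifiable.

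\textbf{The main obstacle} is verifying that the descent genuinely lands in the \emph{dense} orbit of the smaller Spinor variety, i.e. that $[\tilde a_1]$ and $[\tilde b_1]$ have Hamming distance exactly $l=\frac{2l}{2}=\diam(\mathbb S_{2l}^+)$ and intersect only in $\{0\}$ in $W$ — this is what makes Lemma~\ref{lemma:generic identifiability of sec2} applicable rather than some intermediate orbit. This follows because $\dim(H_{a_1}\cap H_{b_1})=N-2l=\dim H_q$ forces $(H_{a_1}/H_q)\cap(H_{b_1}/H_q)=0$ inside the $2l$-dimensional isotropic subspaces of $W\simeq\mathbb C^{4l}$, so the induced pure spinors lie at maximal distance. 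A secondary point requiring care is the compatibility of the two Clifford structures: the Clifford multiplication on $\bigwedge^\bullet W$ must be identified with the restriction of the one on $\bigwedge^\bullet E$ to spinors of the form $\bold e_{[N-2l]}\wedge(-)$, which is exactly formula~\eqref{multiplication} restricted to $v\in W$. Once these identifications are set up the argument is short; one could alternatively phrase the whole thing using the wedge-multiplication map $\bigwedge^{ev}W'\xrightarrow{\bold e_{[N-2l]}\wedge(-)}\bigwedge^{ev}E$ and the injectivity claim alluded to in the paragraph preceding the theorem, which is the route the author seems to prefer.
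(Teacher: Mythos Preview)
Your proposal is correct and follows essentially the same route as the paper: normalize to the representative $[\bold e_{[N]}+\bold e_{[N-2l]}]$, use Corollary~\ref{cor:fiber of Sigma_l} to force $H_{a_1}\cap H_{b_1}=\langle e_1,\dots,e_{N-2l}\rangle_\mathbb C$ for any competing decomposition, factor both spinors through the injective wedge-multiplication map $\bold e_{[N-2l]}\wedge(-)$, and conclude by Lemma~\ref{lemma:generic identifiability of sec2} applied to the dense orbit $\Sigma_{l,2l}\subset\sigma_2(\mathbb S_{2l}^+)$. You are also right that no genuine induction is needed --- the reduction lands directly in the dense orbit of the smaller Spinor variety, and the paper proceeds exactly this way.
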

\begin{proof}
	Fix $l\geq 3$ and consider the orbit $\Sigma_{l,N}=\Spin_{2N}\cdot [\bold{e}_{[N]}+\bold{e}_{[N-2l]}]$. By homogeneity, it is enough to show that the spinor $[\bold{e}_{[N]} +\bold{e}_{[N-2l]}]$ is identifiable.\\
	\indent Assume {\em ad absurdum} that there exist two pure spinors $[a],[b]\in \mathbb S_{N}^+\setminus\{[\bold{e}_{[N]}],[\bold{e}_{[N-2l]}]\}$ such that $[a+b]=[\bold{e}_{[N]} + \bold{e}_{[N-2l]}]$. In particular, since $l\neq 1$, Corollary \ref{cor:fiber of Sigma_l} implies that $H_a\cap H_b=H_{a+b}=H_{\bold{e}_{[N]} + \bold{e}_{[N-2l]}}=E \cap E_{N-2l}$, where we recall that $E_{N-2l}=\langle e_1,\ldots ,e_{N-2l},f_{N-2l+1},\ldots , f_N\rangle_{\mathbb C}$. Since $\langle e_1, \ldots, e_{N-2l}\rangle_{\mathbb C}=E\cap E_{N-2l} \subset H_a, H_b$, by Proposition \ref{prop:from MFI to pure spinor} it follows that there exist two spinors $a',b' \in \bigwedge^{ev}\left(\langle e_{N-2l+1}, \ldots, e_N\rangle_{\mathbb C}\right)$ such that $a=\bold{e}_{[N-2l]}\wedge a'$ and $b=\bold{e}_{[N-2l]}\wedge b'$: by maximality of $H_a$ and $H_b$, the spinors $[a'],[b']\in \mathbb S_{2l}^+$ are pure in a smaller Spinor variety where they have maximum Hamming distance $l$. Moreover, since $[a],[b]\in \mathbb S_{N}^+\setminus \{[\bold{e}_{[N]}],[\bold{e}_{[N-2l]}]\}$, we also know that $[a'],[b']\in \mathbb S_{2l}^+\setminus \{[e_{N-2l+1}\wedge \ldots \wedge e_N],[\mathbbm{1}]\}$. \\
	The spinor $[a+b]=[\bold{e}_{[N-2l]}\wedge(a'+b')]\in \Sigma_{l,N}$ is the image of $[a'+b']\in \Sigma_{l,2l}$ via the wedge-multiplication map 
	\begin{equation}\label{wedge-multiplication} 
	\bold{e}_{[N-2l]}\wedge \bullet \ : \ \bigwedge^{ev}\langle e_{N-2l+1}, \ldots, e_N\rangle_{\mathbb C} \ \longrightarrow \bigwedge^{ev}E
	\end{equation}
	restricting to
	$(\bold{e}_{[N-2l]} \wedge \bullet):\Sigma_{l,2l}\rightarrow \Sigma_{l,N}$. Since the above linear map is injective, the equality $
	\big(\bold{e}_{[N-2l]}\wedge \bullet\big)([a'+b'])  = \big(\bold{e}_{[N-2l]}\wedge \bullet\big)([e_{N-2l+1}\wedge \ldots \wedge e_N + \mathbbm{1}])$ implies that $[a'+b']=[e_{N-2l+1}\wedge \ldots \wedge e_N + \mathbbm{1}]$. But the spinor $[e_{N-2l+1}\wedge \ldots \wedge e_N + \mathbbm{1}]$ is a representative for the dense orbit $\Sigma_{l,2l} \subset \sigma_2(\mathbb S_{2l}^+)$, which is identifiable by Lemma \ref{lemma:generic identifiability of sec2} (since $l\geq 3$). Thus it holds $\{[a'],[b']\}=\{[e_{N-2l+1}\wedge \ldots \wedge e_N],[\mathbbm{1}]\}$, in contradiction to $[a'],[b']\in \mathbb S_{2l}^+\setminus \{[e_{N-2l+1}\wedge \ldots \wedge e_N],[\mathbbm{1}]\}$.
\end{proof}

 \section{Tangential-identifiability in $\tau(\mathbb S_{N}^+)$}\label{sec:tangential-identifiability}

\indent \indent In this section we prove that any point of a tangent orbit $\Theta_{l,N}$ for $l\geq 3$ is tangential-identifiable (cf. Definition \ref{def:cactus}), and we do so via Clifford apolarity (cf. Section \ref{sec:clifford apolarity}). We assume the setting in Remark \ref{rmk:setting}. Since for $N\leq 5$ it holds $\overline{\Theta_{2,N}}=\tau(\mathbb S_{N}^+)=\sigma_2(\mathbb S_{N}^+)$, in the following we assume $N\geq 6$.

\begin{obs}\label{rmk:S_6 tangential-identifiability}
	The $15$--dimensional Spinor variety $\mathbb S_6^+\subset \mathbb P^{31}$ has been deeply studied in the context of the {\em Freudenthal's magic square} \cite{LM01}. Indeed, $\mathbb S_6^+$ is the third of the four {\em Legendrian varieties} lying in the third row of the square (associated to the group $\Spin_{12}$ - see \cite{LM01}), while $\mathbb P^{31}=\sigma_2(\mathbb S_6)^+$ is the third prehomogenous space in the fourth row (associated to the exceptional group $E_7$ - see \cite{Cle03}). In both the above references the generic tangential-identifiability of $\tau(\mathbb S_{6}^+)$ is obtained: we refer to \cite[Propp.\ 5.8-5.12]{LM01} and \cite[Propp.\ 8.4,9.8]{Cle03} for details.
\end{obs}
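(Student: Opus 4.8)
The remark records the classical fact---obtained, via the $E_7$--prehomogeneous structure of $\mathbb P^{31}=\sigma_2(\mathbb S_6^+)$ inside which $\tau(\mathbb S_6^+)$ is the tangential orbit closure, in \cite[Propp.\ 5.8--5.12]{LM01} and \cite[Propp.\ 8.4, 9.8]{Cle03}---that the dense tangent orbit $\Theta_{3,6}=\Theta_{\frac N2,6}$ of $\tau(\mathbb S_6^+)$ is tangential-identifiable. Since this is exactly the base case for the induction carried out in the rest of the section, besides quoting those references the plan is to reprove it self-containedly by Clifford apolarity, in complete parallel with Lemma \ref{lemma:generic identifiability of sec2}. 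Fix the representative $[q_3]=[e_1\wedge e_2+e_3\wedge e_4+e_5\wedge e_6]$ of $\Theta_{3,6}$, which by the curve \eqref{standard skewsymm matrix} lies in $T_{[\mathbbm 1]}\mathbb S_6^+$; the goal is to show that its tangential-locus (Definition \ref{def:tangential-locus}) is the single point $[\mathbbm 1]$.

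First I would compute the Clifford apolarity map $\Phi_{q_3}\colon\bigwedge^{od}E^\vee\to E\oplus E^\vee$ of \eqref{Clifford apolarity} on a general odd spinor $f=\sum_k\sum_{I\in\binom{[6]}{2k+1}}c_I\bold{f}_I$ through the multiplication rule \eqref{multiplication}, and read off $\Ker(\Phi_{q_3})$ explicitly---just as for $\Ker(\Phi_{\bold{e}_{[N]}+\mathbbm 1})$ in Lemma \ref{lemma:generic identifiability of sec2}, it should be a concrete, large graded subspace of $\bigwedge^{od}E^\vee$. Then, using Lemma \ref{lemma:vanishing of global sections} (the zero locus of $j_f$ is $\mathbb S_6^+\cap(V\cdot f)^\perp$), the adjointness of the Clifford pairing of Remark \ref{rmk on clifford scalar product}, and the sign bookkeeping with $(-1)^{pos(\cdot,\cdot)}$ as in \eqref{vanishing with e's}--\eqref{vanishing with f's}, I would show that the common zero locus $\mathcal{Z}(\Ker(\Phi_{q_3}))\subset\mathbb S_6^+$ is \emph{set-theoretically} the single pure spinor $[\mathbbm 1]$. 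Finally I would feed this into the schematic (non-reduced) form of nonabelian apolarity: by Lemma \ref{lemma:Tau_l neq Sigma_l} no point of $\Theta_{3,6}$ lies on a bisecant line, so for any $[p]$ in the tangential-locus of $q_3$ the minimal apolar subscheme $Z\subset\mathbb S_6^+$ with $q_3\in\langle Z\rangle=T_{[p]}\mathbb S_6^+$ is the length--$2$ jet at $[p]$; checking $\rk\Phi_{q_3}=2N=12=2\,\rk\mathcal{U}(1)$ on $q_3$ and invoking the fat-point versions of Proposition \ref{prop:nonabelian} (as in \cite{ottaviani2020tensor,galkazka2017vector}) then gives $H^0(\mathcal{I}_Z\otimes\mathcal{U}(1))=\Ker(\Phi_{q_3})$, so that $Z$---and in particular its support $[p]$---lies in $\mathcal{Z}(\Ker(\Phi_{q_3}))=\{[\mathbbm 1]\}$. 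Hence $[p]=[\mathbbm 1]$, i.e. $q_3$ is tangential-identifiable, and by homogeneity of $\mathbb S_6^+$ so is the whole orbit $\Theta_{3,6}$.

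The main obstacle is twofold. The computational core is the second step: pinning down $\Ker(\Phi_{q_3})$ and then proving that its common zero locus on $\mathbb S_6^+$ collapses to the single point $[\mathbbm 1]$, which is the delicate sign-chasing in $\Cl_q(V)$---the exact analogue of the computation in Lemma \ref{lemma:generic identifiability of sec2} but for the tangent, rather than the secant, representative. The structural subtlety is the rank equality $\rk\Phi_{q_3}=12$: since the nonabelian rank can only drop along the degeneration $\Theta_{\frac N2,N}\subset\overline{\Sigma_{\frac N2,N}}$, it cannot be inferred from Lemma \ref{lemma:generic identifiability of sec2} and must be verified on the tangent representative itself, which is precisely where the $E_7$--orbit analysis of \cite{LM01,Cle03} serves as a safety net. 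The same Clifford-apolarity argument applies verbatim to the dense tangent orbit $\Theta_{\frac N2,N}$ of any $\mathbb S_N^+$ with $N\geq 6$; granting that, the remaining tangent orbits $\Theta_{l,N}$ with $3\leq l<\frac N2$ follow by the wedge-multiplication-injectivity induction of Theorem \ref{thm:identifiable secant orbits}, restricting a putative tangential-decomposition of $[\sum_{i=1}^{l}e_{2i-1}\wedge e_{2i}]$ to the sub-Spinor variety $\mathbb S_{2l}^+$ via the injective map $\bold{e}_{[N-2l]}\wedge\bullet$ of \eqref{wedge-multiplication} and applying the base case to $\Theta_{l,2l}$.
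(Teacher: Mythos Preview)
The statement you are ``proving'' is a \emph{remark}: in the paper it carries no proof at all, it simply points to \cite{LM01,Cle03} for the $N=6$ case. More importantly, you have misread its structural role. It is \emph{not} used as the base case of an induction; Theorem~\ref{thm:tangential-identifiable tangent orbit} is proved directly and uniformly for every $N\geq 6$ and every $l\geq 3$, by exactly the Clifford--apolarity computation you describe (compute $\Ker(\Phi_{q_l})$, then show via Lemma~\ref{lemma:vanishing of global sections} and the sign calculus \eqref{vanishing with e's}--\eqref{vanishing with f's} that $\cal Z(\Ker(\Phi_{q_l}))=\{[\mathbbm 1]\}$). So your ``self-contained reproof'' is essentially the paper's own argument, specialized to $(N,l)=(6,3)$.

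Two points where your plan diverges from the paper and creates unnecessary work or a gap:

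\medskip
\noindent\textbf{(i) The rank check is not needed.} The paper does \emph{not} invoke Proposition~\ref{prop:reduced-nonabelian} (the rank-equality criterion) for tangent points; it uses only the inclusion of Proposition~\ref{prop:nonabelian}. From $H^0(\cal I_Y\otimes\cal U(1))\subset\Ker(\Phi_{q_l})$ one gets $\cal Z(\Ker(\Phi_{q_l}))\subset Y_{supp}$, and since a non-reduced length--$2$ scheme has a single support point, $\cal Z(\Ker(\Phi_{q_l}))=\{[\mathbbm 1]\}$ forces $Y_{supp}=\{[\mathbbm 1]\}$. Your worry about verifying $\rk\Phi_{q_3}=12$ on the tangent representative is therefore a red herring. (Incidentally, the appeal to Lemma~\ref{lemma:Tau_l neq Sigma_l} to rule out reduced $Y$'s is also unnecessary: the argument is run only over non-reduced $Y$.)

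\medskip
\noindent\textbf{(ii) The wedge-multiplication induction does not transfer to the tangent branch.} Your final paragraph proposes to deduce $\Theta_{l,N}$ for $3\leq l<\tfrac N2$ from $\Theta_{l,2l}$ via the injective map $\bold e_{[N-2l]}\wedge\bullet$ of \eqref{wedge-multiplication}, mimicking Theorem~\ref{thm:identifiable secant orbits}. That induction works for \emph{secant} orbits because Corollary~\ref{cor:fiber of Sigma_l} gives $H_a\cap H_b=H_{a+b}$, which forces both summands into the smaller Spinor variety. For a \emph{tangent} point there is no analogous statement: if $[q_l]\in T_{[b]}\mathbb S_N^+$ for some second pure spinor $[b]$, you would first need $H_{q_l}\subset H_b$ to conclude $[b]\in\mathbb S_{2l}^+$, and that containment is precisely what is at stake. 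The paper sidesteps this by carrying out the zero-locus computation for general $l$ in one stroke rather than by induction.
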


\indent In the notation of Section \ref{sec:clifford apolarity}, for a tangent spinor $[q]\in\tau(\mathbb S_{N}^+)\setminus \mathbb S_{N}^+$ we consider the map
\[ \Phi_{q} : H^0\left(\mathbb S_{N}^+,\cal U(1)\right) \longrightarrow H^0\left(\mathbb S_{N}^+,\cal U^\vee\right)^\vee \]
corresponding to the Clifford apolarity in \eqref{Clifford apolarity}. Similarly to the proof of Lemma \ref{lemma:generic identifiability of sec2}, we apply the nonabelian apolarity, but in this case we deal with non-reduced subschemes. Let $Y \subset \mathbb S_{N}^+$ be a non-reduced subscheme of $\mathbb S_N^+$ of length $2$ such that $[q]\in \langle Y\rangle$, and let $Y_{supp}$ be its support: in particular, such a $Y$ corresponds to $\{[p],[q]\}$ for $[p]\in \mathbb  S_{N}^+$ such that $[q] \in T_{[p]}\mathbb S_{N}^+$, and $Y_{supp}=\{[p]\}$. Then by Proposition \ref{prop:nonabelian} we get $H^0\left(\mathbb S_{N}^+, I_Y\otimes \cal U(1)\right)\subset\Ker(\Phi_q)$ and in particular $\cal Z(\Ker(\Phi_q))\subset Y_{supp}$, where $\cal Z(\Ker(\Phi_q))$ is the common zero locus of global sections in $\Ker(\Phi_q)$. If such common zero locus is given by only one pure spinor $[p]\in \mathbb S_{N}^+$, then for any $0$-dimensional subscheme $Y\subset X$ of length $2$ such that $[q]\in \langle Y\rangle$ it holds $Y_{supp}=\{[p]\}$, that is $[q]$ lies on only one tangent space, namely $T_{[p]}\mathbb S_{N}^+$, and $[q]$ is tangential-identifiable.

\begin{teo}\label{thm:tangential-identifiable tangent orbit}
	For any $N\geq 6$ and $l\geq 3$, the tangent orbit $\Theta_{l,N}\subset \tau(\mathbb S_{N}^+)$ is tangential-identifiable.
\end{teo}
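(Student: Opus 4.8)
\emph{Strategy.} The plan is to mimic the proof of Theorem~\ref{thm:identifiable secant orbits}: first settle the \emph{base case}, i.e.\ tangential-identifiability of the dense tangent orbit $\Theta_{\frac N2,N}$, by a direct Clifford-apolarity computation (the tangential counterpart of Lemma~\ref{lemma:generic identifiability of sec2}); then reduce an arbitrary orbit $\Theta_{l,N}$ with $3\le l<\frac N2$ to the dense tangent orbit $\Theta_{l,2l}$ of the smaller Spinor variety $\mathbb S_{2l}^+$, which is again a base case since $2l\ge 6$.

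\emph{Base case.} By $\Spin_{2N}$-equivariance it suffices to treat the representative $[q_{N/2}]=\big[\sum_{i=1}^{N/2}e_{2i-1}\wedge e_{2i}\big]\in T_{[\mathbbm 1]}\mathbb S_N^+$ of \eqref{def:tangent representative}, whose associated skew form is non-degenerate on $E$. I would compute the Clifford-apolarity map $\Phi_{q_{N/2}}\colon\bigwedge^{od}E^\vee\to E\oplus E^\vee$ of \eqref{Clifford apolarity} degree by degree: on $\bigwedge^1 E^\vee$ it is the isomorphism $f\mapsto f\neg q_{N/2}$ onto $E$; on $\bigwedge^3 E^\vee$ it is the contraction $f\mapsto q_{N/2}\cdot f$ onto $E^\vee$, with kernel the space of primitive $3$-forms; and for degree reasons it vanishes on $\bigwedge^{2k+1}E^\vee$ for $k\ge 2$. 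Hence $\Ker\Phi_{q_{N/2}}$ consists of the primitive $3$-forms together with all odd forms of degree $\ge 5$. By Lemma~\ref{lemma:vanishing of global sections}, $\cal Z(\Ker\Phi_{q_{N/2}})=\mathbb S_N^+\cap(V\cdot\Ker\Phi_{q_{N/2}})^\perp$; a short symplectic linear-algebra check shows that $V\cdot\Ker\Phi_{q_{N/2}}$ spans every graded piece $\bigwedge^{2k}E^\vee$ with $k\ge 2$, and exactly the primitive part of $\bigwedge^2 E^\vee$, so that its orthogonal complement in $\bigwedge^{ev}E$ is $\langle\mathbbm 1,q_{N/2}\rangle_{\mathbb C}$, whose projectivization is the tangent line $\overline{[\mathbbm 1][q_{N/2}]}$ to $\mathbb S_N^+$. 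Since $\mathbb S_N^+$ is cut out by quadrics and that line is tangent to it at $[\mathbbm 1]$ but not contained in it, Bézout gives $\cal Z(\Ker\Phi_{q_{N/2}})=\{[\mathbbm 1]\}$. By the non-reduced nonabelian apolarity (Proposition~\ref{prop:nonabelian}), recalled right before the statement, this forces every length-$2$ subscheme $Y\subset\mathbb S_N^+$ with $[q_{N/2}]\in\langle Y\rangle$ to have support $\{[\mathbbm 1]\}$; thus $[q_{N/2}]$ — and hence all of $\Theta_{\frac N2,N}$ — is tangential-identifiable.

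\emph{Reduction.} Fix $3\le l<\frac N2$ and, by equivariance, take $q=q_l=\sum_{i=1}^l e_{2i-1}\wedge e_{2i}$, so that $H_{q_l}=\langle f_{2l+1},\dots,f_N\rangle_{\mathbb C}$ by Remark~\ref{rmk:subspaces for tangent points} and $[q_l]\in T_{[\mathbbm 1]}\mathbb S_N^+$. Assume $[q_l]\in T_{[p]}\mathbb S_N^+$ for some $[p]\in\mathbb S_N^+$. The crucial point is the inclusion $H_{q_l}\subseteq H_p$, which I would deduce from Lemma~\ref{lemma:orbits inclusions}(3): write $[q_l]=\lim_{t\to0}\big[\tfrac{a(t)+p}{t}\big]$ along a curve $[a(t)]\subset\mathbb S_N^+$ with $[a(0)]=[p]$, direction $[q_l]$, and $d([a(t)],[p])=l$ for small $t>0$; by Corollary~\ref{cor:fiber of Sigma_l} each subspace $H_{a(t)+p}=H_{a(t)}\cap H_p$ is $(N-2l)$-dimensional and contained in $H_p$, and an inspection of the degeneration identifies the limiting subspace with $H_p\cap H_{q_l}$, which is then $(N-2l)$-dimensional, i.e.\ equals $H_{q_l}$. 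Hence $f_{2l+1},\dots,f_N$ annihilate $p$, so $p\in\bigwedge^{ev}W$ with $W:=\langle e_1,\dots,e_{2l}\rangle_{\mathbb C}$, i.e.\ $[p]$ lies on $\mathbb S_{2l}^+=\mathbb S_N^+\cap\mathbb P(\bigwedge^{ev}W)$. As $[q_l]\neq[p]$ lies in $T_{[p]}\mathbb S_N^+$, the line $\overline{[p][q_l]}$ lies in $T_{[p]}\mathbb S_N^+$ and in $\mathbb P(\bigwedge^{ev}W)$, hence meets $\mathbb S_{2l}^+$ with multiplicity $\ge2$ at $[p]$, i.e.\ $[q_l]\in T_{[p]}\mathbb S_{2l}^+$. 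But inside $\mathbb S_{2l}^+$ the spinor $[q_l]$ represents the dense tangent orbit $\Theta_{l,2l}$ (full rank $2l$) and lies in $T_{[\mathbbm 1]}\mathbb S_{2l}^+$; since $2l\ge 6$, the base case yields $[p]=[\mathbbm 1]$. Therefore the tangential-locus of $[q_l]$ is the single point $[\mathbbm 1]$, and $\Theta_{l,N}$ is tangential-identifiable.

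\emph{Main obstacle.} I expect two delicate steps. First, the symplectic linear-algebra lemma that the contractions $e_j\neg\omega$ of primitive $3$-forms $\omega$ span the primitive part of $\bigwedge^2 E^\vee$ and nothing more: this is exactly what pins $(V\cdot\Ker\Phi_{q_{N/2}})^\perp$ down to the tangent line, and it must be verified carefully (an $\mathfrak{sp}_N$-irreducibility argument, with the first cases $N=6,8$ possibly checked by hand). Second, making rigorous the limit $\lim_{t\to0}(H_{a(t)}\cap H_p)=H_p\cap H_{q_l}$, i.e.\ controlling how $\Ker\psi$ jumps when the two secant points collide. Once these are established, the descent to $\mathbb S_{2l}^+$ and the equivariance arguments are routine.
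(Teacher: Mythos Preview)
Your approach is correct but genuinely different from the paper's. The paper carries out a single Clifford-apolarity computation valid for \emph{every} $l\ge 3$ at once: it shows $\Ker\Phi_{q_l}=\bigwedge^{od\ge 5}E^\vee\oplus(\Ker\Phi_{q_l}\cap\bigwedge^3E^\vee)$, uses the first summand to force any $[a]\in\mathcal Z(\Ker\Phi_{q_l})$ into $\mathbb C\oplus\bigwedge^2E$, and then uses explicit $3$-forms $\mathbf f_I$ with $\{2k-1,2k\}\not\subset I$ (which lie in the kernel for any $l$) together with the combinations $\mathbf f_{2k-1,2k,r}-\mathbf f_{2h-1,2h,r}$ to pin $[a]$ down to $[\mathbbm 1]$. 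No reduction to a smaller Spinor variety is needed.

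Your two-step strategy mirrors instead the proof of Theorem~\ref{thm:identifiable secant orbits}: a base case at the dense tangent orbit plus a reduction via the invariant $H_{q_l}$. This is a nice structural parallel and makes the geometry (the inclusion $H_{q_l}\subseteq H_p$) more visible. Both of your ``obstacles'' are genuine but tractable. For the first, the commutator identity $[\iota_\omega,\iota_e]=0$ gives $E\neg P^3\subseteq P^2$, and $\mathfrak{sp}$-irreducibility of $P^2$ (nonzero once $N\ge 6$) gives equality; the pairing argument then yields $(V\cdot\Ker\Phi_\omega)^\perp=\langle\mathbbm 1,\omega\rangle$ as you claim. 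For the second, the limit is cleaner than you state: since $[a(t)-p]\to[q_l]$ and $\dim H_{a(t)-p}=N-2l=\dim H_{q_l}$ throughout (the left by Corollary~\ref{cor:fiber of Sigma_l}, the right by Remark~\ref{rmk:subspaces for tangent points}), the kernel varies continuously in $\Gr(N-2l,V)$, so $\lim(H_{a(t)}\cap H_p)=H_{q_l}$ directly, and $H_{q_l}\subseteq H_p$ because each term lies in the closed set $H_p$. One point you use implicitly and should make explicit: the existence of a curve with $d([a(t)],[p])=l$ requires that $q_l$ have rank exactly $2l$ in $T_{[p]}\mathbb S_N^+$; this follows from the disjointness of the $\Theta_{m,N}$ (via the invariant $\dim H_q$, Remark~\ref{rmk:subspaces for tangent points}), since $[q_l]\in[T_{[p]}\mathbb S_N^+]_{2m}$ would force $[q_l]\in\Theta_{m,N}$.
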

\begin{proof} 
	Fix $l=3:\frac{N}{2}$. From the above argument it is enough to prove that $\cal Z\left(\Ker(\Phi_{q_l}) \right)=\{[\mathbbm{1}]\}$ for $[q_l]\in T_{[\mathbbm{1}]}\mathbb S_{N}^+\setminus \mathbb S_{n+1}^+$ being the representative of $\Theta_{l,N}$ as in \eqref{def:tangent representative}: 
	\[ q_l=\sum_{i=1}^l e_{2i-1}\wedge e_{2i} \ \in \bigwedge^2E \ . \]
	\noindent From the Clifford apolarity, for any $f \in \bigwedge^{od}E^\vee$ we have $\Phi_{q_l}(f)=C_{q_l}(f)_{|_E}+C_f(q_l)_{|_{E^\vee}}$. First, notice that for any $f=\sum_{r=1}^N \beta_r f_r \in E^\vee$ one gets 
	\[ C_{q_l}(f)  = \sum_{r=1}^N\sum_{i=1}^{l}\beta_r \big[f_r\cdot (e_{2i-1}\wedge e_{2i})\big] = \sum_{i=1}^{l}\big( \beta_{2i-1}e_{2i}-\beta_{2i}e_{2i-1}\big) \ , \]
	implying that $\Ker(\Phi_{q_l})\cap E^\vee=\{0\}$. Moreover, since $q_l\in \bigwedge^2E$, it is straightforward that
	\[ \bigwedge^{od \geq 5}E^\vee:= \bigoplus_{k\geq 2} \bigwedge^{2k+1}E^\vee \subset \Ker(\Phi_{q_l}) \ . \]
	As the images $\Phi_{q_l}(\bigwedge^{2k+1}E^\vee)$ are linearly independent one to each other, we get
	\[ \Ker(\Phi_{q_l})= \bigwedge^{od\geq 5}E^\vee \oplus \left( \Ker(\Phi_{q_l})\cap \bigwedge^3 E^\vee \right) \ , \]
	\noindent thus $\cal Z\left(\Ker(\Phi_{q_l})\right)$ is the intersection of the common zero loci of the two summands above.\\
	\indent Consider $[a] \in \cal Z\big(\Ker(\Phi_{q_l})\big)$ such that $a =\sum_{s=0}^{\frac{N}{2}}\sum_{J \in \binom{[N]}{2s}}\alpha_J\bold{e}_J$. From Lemma \ref{lemma:vanishing of global sections} it holds 
	\[ \langle V \cdot f,a\rangle =0 \ \ \ \ \ , \forall f \in \bigwedge^{od\geq 5}E^\vee\oplus \left( \Ker(\Phi_{q_l})\cap \bigwedge^3E^\vee \right) \ . \]
	The same computation in \eqref{vanishing with e's} shows that for any index subset $I\subset [N]$ such that $|I|=2k+1\geq 5$ and for any basis vector $e_\lambda \in E$ it holds $0=\langle e_\lambda \cdot \bold{f}_I, a\rangle = (-1)^{pos(\lambda, I)+1} \alpha_{I\setminus \{\lambda\}}$ where again $pos(\lambda,I)$ denotes the position of $\lambda$ in the ordered index subset $I$. We deduce that in $a$ there is no summand indexed by a subset $J$ such that $J\subset I$ and $|J|=|I|-1$ for any $|I|\geq 5$, hence $a \in \mathbb C \oplus \bigwedge^2E \oplus \bigwedge^{n+1}E$. On the other hand, the same computation as in \eqref{vanishing with f's} shows that for any $I \subset [N]$ such that $|I|=3$ and any basis vector $f_\nu \in E^\vee$ such that $\nu \notin I$, it holds $0=\langle f_\nu \cdot \bold{f}_I,a\rangle = (-1)^{pos(\nu,I\cup \{\nu\})+1}\alpha_{I\cup \{\nu\}}$, hence $a$ has no summand in $\bigwedge^{N}E$ either. It follows
	\[\cal Z\left(\Ker(\Phi_{q_l})\right)=\mathbb P\left(\mathbb C\oplus \bigwedge^2E\right) \cap \cal Z\left( \bigwedge^3E^\vee\cap \Ker\big(\Phi_{q_l}\big)\right) \ . \] 
	\indent Clearly, $[\mathbbm{1}]\in \cal Z(\Ker(\Phi_{q_l}))$ since for any $\lambda \in \mathbb C$ and any $f \in \bigwedge^3E^\vee$ one has $\langle f, v \cdot \lambda \rangle = \lambda \langle f, v \rangle =0$. Moreover, for any $b\in \bigwedge^2E$ it holds that $b \in \cal Z\left(\Ker(\Phi_{q_l})\right)$ if and only if $\lambda + b \in \cal Z\left(\Ker(\Phi_{q_l})\right)$ for any $\lambda \in \mathbb C$: in particular, it is enough to prove that $\cal Z\left(\Ker(\Phi_{q_l})\right)\cap \mathbb P(\bigwedge^2E)=\emptyset$ in order to conclude. \\
	\indent Let $[a]\in \cal Z\left(\Ker(\Phi_{q_l})\right)$ be such that $a=\sum_{\{s\lneq t\}\subset [N]}\alpha_{st}e_s\wedge e_t$. First, consider an index subset $I\subset [N]$ such that $|I|=3$ and $\{2k-1,2k\}\nsubseteq I$ for any $k\in [l]$: the condition $C_{\bold{f}_I}(q_l)_{|_{E^\vee}}=0$ implies $\bold{f}_I \in \Ker(\Phi_{q_l})$. Then, for any $e_\lambda \in E$ one gets 
	\[ 0= \langle e_\lambda \cdot \bold{f}_I,a\rangle = \delta_{\lambda, i_1}\alpha_{i_2,i_3} - \delta_{\lambda,i_2}\alpha_{i_1,i_3}+\delta_{\lambda, i_3}\alpha_{i_1,i_2} \ , \]
	\noindent for $\delta_{xy}$ being the Kronecker symbol. Since $N\geq 6$, given any two distinct indices $\{i\lneq j\}\subset [N]$ such that $\{i,j\}\neq \{2k-1,2k\}$ for any $k\in [l]$, one can always find a third index $r\in [N]\setminus \{i,j\}$ such that $\{2k-1,2k\}\nsubseteq I=\{i,j,r\}$ for any $k\in [l]$. Thus for any $\{i,j\}\neq \{2k-1,2k\}$ it holds $\alpha_{ij}=0$ and 
	\[\cal Z\left(\Ker(\Phi_{q_l})\right)\cap \mathbb P\left(\bigwedge^2E\right)\subset \left\{[a]=\left[\sum_{k=1}^{l}\alpha_{2k-1,2k}e_{2k-1}\wedge e_{2k}\right]\right\} \ . \]
	\noindent Now, for any $\{k\lneq h\}\subset [l]$ and any $r \in [N]\setminus\{2k-1,2k,2h-1,2h\}$ consider $\bold{f}_{2k-1,2k,r}-\bold{f}_{2h-1,2h,r} \in \bigwedge^3E^\vee$: it is a straightforward count that $\bold{f}_{2k-1,2k,r}-\bold{f}_{2h-1,2h,r} \in \Ker(\Phi_{q_l})$. In particular, $0=\langle e_r\cdot (\bold{f}_{2k-1,2k,r}-\bold{f}_{2h-1,2h,r}), e_r\cdot a\rangle= \alpha_{2k-1,2k}-\alpha_{2h-1,2h}$, hence it holds 
	\[ \alpha_{2k-1,2k}=\alpha_{2h-1,2h} \ \ \ \ \ , \forall \{h\lneq k\}\subset [l] \ . \]
	It follows that $a=\alpha\cdot q_l$ for some $\alpha \in \mathbb C$. But $[q_l] \notin \mathbb S_{N}^+$ (since $l\geq 3$), thus it has to be $\alpha=0$ and $\cal Z\left(\Ker(\Phi_{q_l})\right)\cap \mathbb P(\bigwedge^2E)=\emptyset$. We conclude that $\cal Z\left(\Ker(\Phi_{q_l})\right)=\{[\mathbbm{1}]\}$.
\end{proof}

\section{Dimensions of $\Spin_{2N}$--orbits in $\sigma_2(\mathbb S_N^+)$}\label{sec:orbit dimensions spinor}

\indent \indent In this section we compute the dimensions of each orbit in the secant variety of lines to a Spinor variety. We have postponed this computation as we apply tangential-identifiability for computing dimensions in the tangent branch. We assume the setting in Remark \ref{rmk:setting}. \\
\indent From \cite[Theorem 1.4]{zak1993tangents} the secant orbit $\Sigma_{\frac{N}{2},N}$ is dense of dimension $\dim \Sigma_{\frac{N}{2},N}=\dim \sigma_2(\mathbb S_{N}^+)$ as in \eqref{dim of sec}. Moreover, for $N\geq 6$ the tangential variety has codimension $1$, hence its dense orbit $\Theta_{\frac{N}{2},N}$ has dimension $\dim \Theta_{\frac{N}{2},N}=\dim \tau(\mathbb S_N^+)=N(N-1)$.\\
\indent Recall that for $N\leq 5$ the Spinor variety $\mathbb S_N^+$ has diameter $2$, and the distance--$2$ orbit $\Sigma_{2,N}$ either does not exist (for $N\leq 3$) or it is the dense one. In this respect, in the following we assume $N\geq 6$.

\begin{lemma}\label{lemma:fibers of theta as sec}
	For any $N\geq 6$ and any $l=2:\frac{N}{2}-1$, the fibration
	\[ \begin{matrix}
	\xi: & \Sigma_{l,N} & \longrightarrow & \OG(N-2l,V)\\
	& [a+b] & \mapsto & H_{a+b} & \stackrel{\text{Cor.} \ \ref{cor:fiber of Sigma_l}}{=}H_a\cap H_b
	\end{matrix} \]
	has fibers isomorphic to the dense orbit on the smaller Spinor variety $\mathbb S_{2l}^+$, namely $\xi^{-1}(H) \simeq \Sigma_{l,2l}\subset \sigma_2(\mathbb S_{2l}^+)$. In particular,
	\[\overline{\xi^{-1}(H)}\simeq \sigma_2(\mathbb S_{2l}^+) \ . \]
\end{lemma}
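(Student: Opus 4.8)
The plan is to analyze the fibration $\xi$ directly using the structure established for $\Sigma_{l,N}$ in Section \ref{sec:poset spinor}. First I would observe that $\xi$ is well-defined by Corollary \ref{cor:fiber of Sigma_l}, and that $\Spin^q(V)$ acts transitively on the base $\OG(N-2l,V)$ (any $(N-2l)$-dimensional isotropic subspace of $V$ is conjugate to $E_{N-2l}\cap E$ under $\SO(V)$, hence under $\Spin^q(V)$ by the conjugacy action of Remark \ref{rmk:spin on OG}), so it suffices to identify a single fiber, say $\xi^{-1}(H)$ for $H = \langle e_1,\ldots,e_{N-2l}\rangle_\mathbb C = E\cap E_{N-2l}$. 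I would then show that a point $[a+b]\in\Sigma_{l,N}$ with $d([a],[b])=l$ lies in this fiber precisely when $H_a\cap H_b = \langle e_1,\ldots,e_{N-2l}\rangle_\mathbb C$, which by Proposition \ref{prop:from MFI to pure spinor} (as in the proof of Theorem \ref{thm:identifiable secant orbits}) forces $a = \bold{e}_{[N-2l]}\wedge a'$ and $b = \bold{e}_{[N-2l]}\wedge b'$ with $[a'],[b']$ pure spinors in the smaller Spinor variety $\mathbb S_{2l}^+ \subset \mathbb P(\bigwedge^{ev}\langle e_{N-2l+1},\ldots,e_N\rangle_\mathbb C)$ having maximal Hamming distance $l$ in $\mathbb S_{2l}^+$.

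The next step is to make the identification $\xi^{-1}(H)\simeq\Sigma_{l,2l}$ precise via the wedge-multiplication map \eqref{wedge-multiplication}, $\bold{e}_{[N-2l]}\wedge\bullet : \bigwedge^{ev}\langle e_{N-2l+1},\ldots,e_N\rangle_\mathbb C \to \bigwedge^{ev}E$, which is injective and $\Spin$-equivariant with respect to the natural inclusion of the smaller spin group (fixing $e_1,\ldots,e_{N-2l}$) into $\Spin^q(V)$. I would verify that this map sends $\mathbb S_{2l}^+$ into $\mathbb S_N^+$ (pure spinors to pure spinors, compatible with the embedding of isotropic subspaces $H'\mapsto \langle e_1,\ldots,e_{N-2l},H'\rangle_\mathbb C$), and that it restricts to a bijection $\Sigma_{l,2l}\to\xi^{-1}(H)$: surjectivity is the content of the previous paragraph, and injectivity is immediate from injectivity of the linear map. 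Since $\Sigma_{l,2l}$ is by definition the dense orbit of $\sigma_2(\mathbb S_{2l}^+)$ (the Hamming distance $l = \frac{(2l)}{2}$ equals the diameter of $\mathbb S_{2l}^+$), this gives $\xi^{-1}(H)\simeq\Sigma_{l,2l}$.

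For the final assertion $\overline{\xi^{-1}(H)}\simeq\sigma_2(\mathbb S_{2l}^+)$, I would note that the wedge-multiplication map is a linear embedding of the ambient projective space $\mathbb P(\bigwedge^{ev}\langle e_{N-2l+1},\ldots,e_N\rangle_\mathbb C)$ into $\mathbb P(\bigwedge^{ev}E)$, hence a closed immersion, so it commutes with Zariski closure: $\overline{\xi^{-1}(H)}$ is the image of $\overline{\Sigma_{l,2l}} = \sigma_2(\mathbb S_{2l}^+)$ (the closure of the dense orbit being the whole secant variety). I expect the main obstacle to be the bookkeeping in the first step — verifying that the condition $H_a\cap H_b = H$ really does confine both $a$ and $b$ to the sub-exterior-algebra, and that no spurious pure spinors outside $\mathbb S_{2l}^+\wedge\bold{e}_{[N-2l]}$ can arise — but this is essentially the same argument already carried out in the proof of Theorem \ref{thm:identifiable secant orbits}, so it reduces to citing that reasoning. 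A minor point to state carefully is that the identification is $\Spin$-equivariant enough to transport the orbit structure, which follows from the compatibility of the two conjugacy actions on isotropic subspaces.
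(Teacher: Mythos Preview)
Your proposal is correct and follows essentially the same approach as the paper's proof: reduce by homogeneity to the fiber over $H=\langle e_1,\ldots,e_{N-2l}\rangle_\mathbb C$, use that $H\subset H_a,H_b$ forces $a=\bold{e}_{[N-2l]}\wedge a'$ and $b=\bold{e}_{[N-2l]}\wedge b'$ with $[a'],[b']\in\mathbb S_{2l}^+$ of maximal Hamming distance, and identify the fiber with $\Sigma_{l,2l}$ via the injective wedge-multiplication map \eqref{wedge-multiplication}. Your treatment of the closure statement (via the linear embedding being a closed immersion) is slightly more explicit than the paper's, but the argument is the same.
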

\begin{proof}
	Consider the distance-$l$ orbit $\Sigma_{l,N}=\Spin_{2N}\cdot [\bold{e}_{[N]} + \bold{e}_{[N-2l]}]$ and the above fibration $\xi$. By homogeneity, it is enough to determine the fiber at the subspace $\xi([\bold{e}_{[N]} + \bold{e}_{[N-2l]}])=E\cap E_{N-2l}=\langle e_1 , \ldots , e_{N-2l}\rangle_{\mathbb C}$. We set $V'= \langle e_{N-2l+1}, \ldots, e_N, f_{N-2l+1}, \ldots ,f_N \rangle_\mathbb C$ and $E'=V'\cap E$.\\
	\indent Let $[a+b]\in \xi^{-1}(E\cap E_{N-2l})$: then $E\cap E_l \subset H_a$ (resp. $E\cap E_l \subset H_b$) and, by definition of the maximal fully isotropic subspaces as kernels of \eqref{psi_a}, we can write $[a]=[\bold{e}_{[N-2l]} \wedge w_a ]$ (resp. $[b]=[\bold{e}_{[N-2l]}\wedge w_b]$) for some $w_a \in \bigwedge^{ev}E'$ (resp. $w_b \in \bigwedge^{ev}E'$). In particular, since $w_a$ and $w_b$ are defined by $2l$ linearly independent columns in the matrices describing $H_a$ and $H_b$, they corresponds to maximal isotropic subspaces $H_a', H_b' \subset \OG^+(2l,V')$, hence $[w_a],[w_b] \in \mathbb S_{2l}^+$. Finally, the condition $H_a\cap H_b=E\cap E_{N-2l}$ implies $H_a'\cap H_b'=(0)$, hence $[w_a]$ and $[w_b]$ have maximum Hamming distance $d([w_a],[w_b])=l$ in $\mathbb S_{2l}^+$. Therefore the injective wedge-multiplication map $(\bold{e}_{[N-2l]}\wedge \bullet) \ : \ \Sigma_{l,2l}\rightarrow \Sigma_{l,N}$ in \eqref{wedge-multiplication} gives the {\em biregular} isomorphism
	\begin{align*}
	\xi^{-1}(E\cap E_{N-2l}) & = \left\{[a+b] \in \Sigma_{l,N} \ | \ H_a\cap H_b = E\cap E_{N-2l}\right\} \\
	& = \left\{ [\bold{e}_{[N-2l]}\wedge (w_a+w_b)] \in \Sigma_{l,N} \ | \ [w_a],[w_b]\in \mathbb S_{2l}^+, \  d([w_a],[w_b])=l \right\} \\
	& \simeq \left\{ [w_a+w_b] \in \sigma_2^\circ(\mathbb S_{2l}^+) \ | \ d([w_a],[w_b])=l \right\} \\
	& = \Sigma_{l,2l} \ .
	\end{align*} 
\end{proof}

\begin{prop}\label{prop:orbit dimension in sec2}
	For any $N\geq 6$ and $l=2:\frac{N}{2}-1$, the secant orbit $\Sigma_{l,N} \subset \sigma_2^\circ(\mathbb S_{N}^+)$ has dimension
	\[
	\dim \Sigma_{l,N} = \begin{cases}
	\frac{N(N-1)}{2}+4N-15 & \text{if } l=2 \\
	\frac{N(N-1)}{2}+l(2N-1)-2l^2+1 & \text{if } l\geq3   \ .
	\end{cases} \]
\end{prop}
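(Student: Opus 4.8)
The plan is to read off $\dim\Sigma_{l,N}$ from the fibration $\xi:\Sigma_{l,N}\to\OG(N-2l,V)$ of Lemma~\ref{lemma:fibers of theta as sec}. Since $2\le l\le\frac N2-1$ the integer $k:=N-2l$ lies in the range $1\le k\le N-2<N$, so $\OG(k,V)$ is a \emph{non-maximal} isotropic Grassmannian; it is irreducible and $\Spin_{2N}$ acts transitively on it (any two isotropic subspaces of the same non-maximal dimension are conjugate already under $\SO(V)$, by Witt's theorem). As $\Spin_{2N}$ acts transitively on $\Sigma_{l,N}$ (Section~\ref{sec:poset spinor}) and $\xi$ is $\Spin_{2N}$-equivariant, $\xi$ is a surjective morphism realizing $\Sigma_{l,N}$ as a homogeneous fibre bundle whose fibre is the dense orbit $\Sigma_{l,2l}\subset\sigma_2(\mathbb S_{2l}^+)$ (Lemma~\ref{lemma:fibers of theta as sec}); hence
\[
\dim\Sigma_{l,N}=\dim\OG(N-2l,V)+\dim\Sigma_{l,2l}.
\]

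Next I would compute the two summands. For the base, $\OG(k,V)$ is cut out inside $\Gr(k,V)$ by the $\binom{k+1}{2}$ independent conditions saying that $q$ restricts to zero on the $k$-plane, so $\dim\OG(k,V)=k(2N-k)-\binom{k+1}{2}$; with $k=N-2l$ this equals $(N-2l)(N+2l)-\binom{N-2l+1}{2}$. For the fibre, Lemma~\ref{lemma:fibers of theta as sec} gives $\dim\Sigma_{l,2l}=\dim\sigma_2(\mathbb S_{2l}^+)$, which is supplied by the non-defectivity formula~\eqref{dim of sec} for $\mathbb S_{2l}^+$ (recall $\dim\mathbb S_{2l}^+=\binom{2l}{2}$). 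The two branches of \eqref{dim of sec} reproduce exactly the case split in the statement: for $l=2$ one has $2l=4\le 6$, hence $\dim\sigma_2(\mathbb S_4^+)=2^{3}-1=7$, while for $l\ge 3$ one has $2l\ge 6$, hence $\dim\sigma_2(\mathbb S_{2l}^+)=2\binom{2l}{2}+1=4l^2-2l+1$.

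Finally I would add the contributions and simplify. For $l=2$,
\[
\dim\Sigma_{2,N}=(N-4)(N+4)-\binom{N-3}{2}+7=\frac{N^2+7N-30}{2}=\frac{N(N-1)}{2}+4N-15,
\]
while for $l\ge 3$,
\[
\dim\Sigma_{l,N}=(N-2l)(N+2l)-\binom{N-2l+1}{2}+4l^2-2l+1=\frac{N(N-1)}{2}+l(2N-1)-2l^2+1,
\]
both being routine manipulations of binomial coefficients. The only step that really needs attention is the first one: that $\dim\Sigma_{l,N}$ equals the sum of the base and fibre dimensions rests on $\xi$ being surjective with equidimensional fibres, which follows from $\Spin_{2N}$-equivariance together with transitivity of the action on $\Sigma_{l,N}$ and on the non-maximal isotropic Grassmannian $\OG(N-2l,V)$; granting that, everything else is bookkeeping and an appeal to \eqref{dim of sec}.
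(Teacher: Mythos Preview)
Your proof is correct and follows essentially the same approach as the paper: both use the fibration $\xi$ of Lemma~\ref{lemma:fibers of theta as sec}, compute $\dim\OG(N-2l,V)$ via the same formula $k(2N-k)-\binom{k+1}{2}$, and read off the fibre dimension from~\eqref{dim of sec}. Your version is slightly more explicit in justifying why the dimensions add (equivariance plus transitivity on the non-maximal isotropic Grassmannian, rather than a bare appeal to the fibre dimension theorem) and in carrying out the final algebra, but the argument is the same.
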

\begin{proof}
	Consider the fibration $\xi$ in Lemma \ref{lemma:fibers of theta as sec}. From the fiber dimension theorem we get
	\[ \dim \Sigma_{l,N}=\dim \OG(N-2l,V) + \dim \sigma_2(\mathbb S_{2l}^+) \ .\]
	In general, the orthogonal Grassmannian $\OG(r,M)$ coincides with the kernel of the global section $s_q\in H^0(\Gr(r,M),\Sym^2\cal U)$ induced by the quadratic form $q$ on $\mathbb C^M$, where $\cal U$ is the rank-$r$ universal bundle on $\Gr(r,M)$. Thus
	\[ \dim \OG(r,M) = \dim \Gr(r,M) - \rk \left(\Sym^2(\cal U)\right) = r(M-r)- \binom{r+1}{2} \ . \]
	The thesis follows by substituting $r=N-2l$ and $M=2N$, and recovering $\dim\sigma_2(\mathbb S_{2l}^+)$ from \eqref{dim of sec}.
\end{proof}

The above computation puts on light a particular feature of the second-to-last secant orbit. In the following result we need $N\geq 8$ in order to get an intermediate proper secant orbit between $\Sigma_{2,N}$ and $\Sigma_{\frac{N}{2},N}$.

\begin{cor}\label{prop:hypersurface in secant orbits}
	For any $N\geq 8$, the closure of the second-to-last orbit $\Sigma_{\frac{N-2}{2},N}$ is a divisor in $\sigma_2(\mathbb S_{N}^+)$ parametrized by the vanishing of a pfaffian. Indeed, up to chart-changing, all pure spinors $[a]\in \mathbb S_{N}^+$ such that $d([\bold{e}_{[N]}],[a])=\frac{N-2}{2}$ correspond to maximal fully isotropic subspaces $H_a$ described by matrices ${\tiny \begin{bmatrix}I_{N}\\A \end{bmatrix}}$ where $\rk(A)=N-\dim(E\cap H_a)=N-2$.
\end{cor}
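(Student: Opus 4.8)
The plan is to combine the dimension count from Proposition \ref{prop:orbit dimension in sec2} with the explicit matrix parametrization of pure spinors coming from Proposition \ref{prop:from MFI to pure spinor}. First I would verify the divisor claim numerically: for $l=\frac{N-2}{2}$, the orbit $\Sigma_{\frac{N-2}{2},N}$ with $N\geq 8$ falls into the range $l\geq 3$ (since $\frac{N-2}{2}\geq 3$), so Proposition \ref{prop:orbit dimension in sec2} gives $\dim\Sigma_{\frac{N-2}{2},N}=\frac{N(N-1)}{2}+l(2N-1)-2l^2+1$ with $l=\frac{N-2}{2}$. Substituting and simplifying this expression, I expect to obtain exactly $N(N-1)=\dim\sigma_2(\mathbb S_N^+)-1$ for $N\geq 7$ (using $\dim\sigma_2(\mathbb S_N^+)=N(N-1)+1$ from \eqref{dim of sec}). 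Hence $\overline{\Sigma_{\frac{N-2}{2},N}}$ has codimension $1$ in $\sigma_2(\mathbb S_N^+)$, i.e. it is a divisor; since it is a closed irreducible $\Spin_{2N}$--invariant hypersurface, it must be cut out (inside $\sigma_2(\mathbb S_N^+)$) by a single $\Spin_{2N}$--semiinvariant, which I will identify with a pfaffian below.

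Next I would make the matrix description precise. By homogeneity it suffices to work in the affine chart around $[\mathbbm{1}]$, where (as recalled before Proposition \ref{prop:from MFI to pure spinor}) a pure spinor $[a]$ with $E\cap H_a=\{0\}$, i.e. with $d([\bold{e}_{[N]}],[a])=\frac{N}{2}$, corresponds to a maximal fully isotropic subspace $H_a=\langle g_1,\dots,g_N\rangle_{\mathbb C}$ with $g_j=f_j+\sum_k\alpha_{kj}e_k$, so that $H_a$ is the column span of $\begin{bmatrix}I_N\\ A\end{bmatrix}$ for a skew-symmetric $A=(\alpha_{kj})\in\bigwedge^2\mathbb C^N$; by Proposition \ref{prop:from MFI to pure spinor} the corresponding pure spinor is $a=\sum_{I\subset[N]}\Pf(A_I)\mathbf{e}_I$. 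The general point of $\sigma_2(\mathbb S_N^+)$ in this chart is $[\mathbbm{1}+a]$ with $[a]$ ranging over this family, and the Hamming-distance condition $d([\mathbbm{1}],[a])=l$ translates, via Proposition \ref{prop:hamming distance}, into $\dim(E^\vee\cap H_a)=N-2l$; but $\dim(E^\vee\cap H_a)=N-\rk(A)$ since $E^\vee\cap H_a$ is the kernel of the map $E^\vee\to E$ given by $A$. Thus $d([\mathbbm{1}],[a])=\frac{N-2}{2}$ is equivalent to $\rk(A)=N-2$, i.e. to $A$ being a skew-symmetric matrix of corank $2$, which is precisely the vanishing of $\Pf(A)$ (a corank-$2$ skew-symmetric matrix has zero pfaffian, while generic $A$ has $\Pf(A)\neq 0$). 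This is the pfaffian hypersurface claimed, and it matches the fact that in these coordinates $\mathbbm{1}+a$ is a pure spinor exactly when $A$ has corank $\geq N-2$ in the appropriate sense—no, rather: $[\mathbbm{1}+a]\in\overline{\Sigma_{\frac{N-2}{2},N}}$ iff $\Pf(A)=0$.

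I expect the main obstacle to be the bookkeeping needed to justify that the chart-change is harmless, i.e. that describing the \emph{generic} point of $\overline{\Sigma_{\frac{N-2}{2},N}}$ via this single affine chart around $[\mathbbm{1}+a]$ genuinely captures the whole divisor: one must check that a dense subset of $\Sigma_{\frac{N-2}{2},N}$ can be moved by $\Spin_{2N}$ into this chart with both summands in ``general position'' (one at $[\mathbbm{1}]$, the other at some $[a]$ with $E\cap H_a=\{0\}$), which follows from Lemma \ref{lemma:transitivity on lines} and Proposition \ref{prop:O_l is orbit} together with an openness argument. The identification $\dim(E^\vee\cap H_a)=N-\rk(A)$ and the equivalence ``corank $2$ $\Leftrightarrow$ $\Pf(A)=0$ and $\Pf$ not identically the defining equation of a smaller stratum'' are then elementary linear algebra; the pfaffian is $\Spin_{2N}$--semiinvariant on the chart, which is consistent with the invariance of $\overline{\Sigma_{\frac{N-2}{2},N}}$. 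Finally I would remark that this also re-proves, in coordinates, that $\tau(\mathbb S_N^+)$ (the $\Theta$-branch) is a \emph{different} divisor, cut out not by $\Pf(A)$ but by the discriminant-type condition governing tangency, consistent with Theorem \ref{thm:orbit partition of sec}.
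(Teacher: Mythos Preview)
Your proposal is correct and follows essentially the same approach as the paper: the divisor claim is a direct substitution of $l=\frac{N-2}{2}$ into the dimension formula of Proposition \ref{prop:orbit dimension in sec2}, and the pfaffian description comes from the matrix parametrization of $H_a$ together with the translation of Hamming distance into the rank of the skew-symmetric block $A$. The only cosmetic difference is that you anchor the chart at $[\mathbbm{1}]$ (so that the relevant intersection is $E^\vee\cap H_a$) whereas the paper and the statement of the Corollary work around $[\bold{e}_{[N]}]$ (so that the intersection is $E\cap H_a$); by homogeneity these are equivalent, but you should make sure your block conventions for $\begin{bmatrix}I_N\\ A\end{bmatrix}$ are consistent with your choice of base point when you write up the final version.
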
 
\begin{proof}
	The thesis \textquotedblleft being a divisor\textquotedblright \space is a straightforward count from Proposition \ref{prop:orbit dimension in sec2}. We show that the closure of $\Sigma_{\frac{N-2}{2},N}$ is parametrized by the vanishing of a pfaffian.\\
	\indent The pure spinors $[a]\in \mathbb S_{N}^+$ having Hamming distance $l$ from $[\bold{e}_{[N]}]$ correspond to subspaces $H_a\in \OG^+(N,V)$ such that $\dim (E\cap H_a)=N-2l$. Up to chart-changing, we may assume that $H_a$ is described by the matrix $\scriptsize{\begin{bmatrix} I_{N}\\ A \end{bmatrix}}$ for a certain $A \in \bigwedge^2\mathbb C^{N}$: in particular, $\rk(A)= N-\dim(E\cap H_a)= 2l$. It follows that the pure spinors having Hamming distance $l=\frac{N-2}{2}$ from $[\bold{e}_{[N]}]$ are described (up to chart-changing) by skew--symmetric matrices of rank $N-2$, that is they are parametrized by the vanishing of the pfaffian of such matrices. Now, given the pure spinor $[\bold{e}_{[N]}]$, the subvarieties
	\[\cal F_{\bold{e}_{[N]}, \frac{N-2}{2}}=\left\{[\bold{e}_{[N]}+ a] \ \big| \ [a]\in \mathbb S_{N}^+, \ d([\bold{e}_{[N]}],[a])\leq\frac{N-2}{2}\right\}\subset \overline{\Sigma_{\frac{N-2}{2},N}} \ , \]
	\[\cal F_{\bold{e}_{[N]}}=\left\{[\bold{e}_{[N]} + a] \ | \ [a]\in \mathbb S_{N}^+\right\}\subset \sigma_2^\circ(\mathbb S_{N}^+) \]
	are such that $\cal F_{\bold{e}_{[N]},\frac{N-2}{2}}= \cal F_{\bold{e}_{[N]}} \cap V(\Pf(A))$: more in general, for any pure spinor $[b]\in \mathbb S_{N}^+$ it holds $\cal F_{b,\frac{N-2}{2}}=\cal F_b \cap V(\Pf(A))$, giving the {\em rational} isomorphism in the thesis.
\end{proof}

\indent  We are left with computing dimensions in the tangent branch for $N\geq 6$. From Proposition \ref{prop:orbit dimension in sec2} we know that $\Theta_{2,N}=\Sigma_{2,N}$ has dimension $\frac{N(N-1)}{2}+4N-15$. Thus in the following we assume $N\geq 6$ and $l=3:\frac{N-1}{2}-1$.

\begin{obs*}
	In the following proof we use that the tangent orbits $\Theta_{l,N}$ for $l\geq 3$ are tangentially-identifiable (cf. Theorem \ref{thm:tangential-identifiable tangent orbit}): each point $[q]\in\Theta_{l,N}$ lies on a unique tangent space $T_{[a]}\mathbb S_{N}^+$. 
\end{obs*}

\begin{prop}\label{prop:orbit dimension in tau}
	For any $N\geq 6$ and any $l=3:\frac{N}{2}-1$, the tangent orbit $\Theta_{l,N} \subset \tau(\mathbb S_{N}^+)$ has dimension
	\begin{align*}
	\dim \Theta_{l,N} =\frac{N(N-1)}{2}+l(2N -1) -2l^2 \ . \end{align*}
	In particular, for such $N,l$, the closure $\overline{\Theta_{l,N}}$ is a divisor in $\overline{\Sigma_{l,N}}$.
\end{prop}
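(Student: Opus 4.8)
The plan is to use tangential-identifiability on the tangent branch in exactly the way Lemma~\ref{lemma:fibers of theta as sec} used the map $\xi$ on the secant branch. For $l\geq 3$ Theorem~\ref{thm:tangential-identifiable tangent orbit} tells us that every $[q]\in\Theta_{l,N}$ lies on a \emph{unique} tangent space $T_{[a]}\mathbb{S}_N^+$, so the assignment $[q]\mapsto[a]$ is a well-defined $\Spin_{2N}$-equivariant morphism
\[ \eta:\Theta_{l,N}\longrightarrow\mathbb{S}_N^+ , \]
which is surjective because $\mathbb{S}_N^+$ is $\Spin_{2N}$-homogeneous, and whose fiber over $[a]$ is precisely $[T_{[a]}\mathbb{S}_N^+]_{2l}$. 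By homogeneity all fibers are isomorphic, hence by the fiber dimension theorem $\dim\Theta_{l,N}=\dim\mathbb{S}_N^+ + \dim[T_{[a]}\mathbb{S}_N^+]_{2l}=\binom{N}{2}+\dim[T_{[a]}\mathbb{S}_N^+]_{2l}$.

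Next I would compute the fiber dimension. Using the parametrization of $\mathbb{S}_N^+$ around $[a]$ (cf.\ \eqref{standard skewsymm matrix}), the affine tangent cone at $[a]$ is $\langle a\rangle_{\mathbb{C}}\oplus\bigwedge^2 H_a$, and $[T_{[a]}\mathbb{S}_N^+]_{2l}$ is the image in $\mathbb{P}(\bigwedge^{ev}E)$ of the cone $\{(\alpha,C)\ :\ \alpha\in\mathbb{C},\ C\in\bigwedge^2 H_a,\ \rk C=2l\}$. The locus of skew-symmetric $N\times N$ matrices of rank $\leq 2l$ is classically cut out by the Pfaffians of order $2l+2$ and has codimension $\binom{N-2l}{2}$ in $\bigwedge^2\mathbb{C}^N$, so its rank-exactly-$2l$ stratum is irreducible of dimension $\binom{N}{2}-\binom{N-2l}{2}$; letting $\alpha$ vary freely raises the affine dimension by $1$ and the subsequent projectivization lowers it by $1$, so that
\[ \dim[T_{[a]}\mathbb{S}_N^+]_{2l}=\binom{N}{2}-\binom{N-2l}{2}. \]
Therefore $\dim\Theta_{l,N}=2\binom{N}{2}-\binom{N-2l}{2}=N(N-1)-\binom{N-2l}{2}$, and a direct expansion of the binomial rewrites this as $\tfrac{N(N-1)}{2}+l(2N-1)-2l^2$ (as a consistency check, $l=\tfrac N2$ returns $N(N-1)=\dim\tau(\mathbb{S}_N^+)$, and for $l=\tfrac N2$ one also recovers the density of $\Theta_{\frac N2,N}$ in $\tau(\mathbb{S}_N^+)$).

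For the last assertion I would compare with Proposition~\ref{prop:orbit dimension in sec2}: for $l\geq 3$ it gives $\dim\Sigma_{l,N}=\tfrac{N(N-1)}{2}+l(2N-1)-2l^2+1=\dim\Theta_{l,N}+1$, and closures do not change dimension. Since $\Theta_{l,N}\subset\overline{\Sigma_{l,N}}$ by Lemma~\ref{lemma:orbits inclusions}(3) while $\Theta_{l,N}\neq\Sigma_{l,N}$ by Lemma~\ref{lemma:Tau_l neq Sigma_l}, the irreducible closed set $\overline{\Theta_{l,N}}$ is properly contained in the irreducible variety $\overline{\Sigma_{l,N}}$ in codimension $1$, i.e.\ it is a divisor. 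The main obstacle is the fiber computation in the middle step: one must check carefully that the scaling coefficient $\alpha$ of the pure-spinor direction is genuinely a free parameter of $[T_{[a]}\mathbb{S}_N^+]_{2l}$, so that the fiber is the full projectivized cone over the rank-$2l$ stratum rather than that stratum alone — this is what makes the count come out one unit above a careless estimate, and it is also where the hypothesis $l\geq 3$ is really used, since for $l=2$ the morphism $\eta$ is not even defined and Proposition~\ref{prop:orbit dimension in sec2} indeed records a genuinely different (smaller) dimension for $\Theta_{2,N}=\Sigma_{2,N}$.
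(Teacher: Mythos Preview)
Your proof is correct and follows essentially the same strategy as the paper: use tangential-identifiability (Theorem~\ref{thm:tangential-identifiable tangent orbit}) to get a well-defined fibration $\Theta_{l,N}\to\mathbb S_N^+$, identify the fiber with the rank-$2l$ stratum of $\bigwedge^2\mathbb C^N$, and add dimensions. The only cosmetic difference is that the paper computes the fiber dimension as a $\GL(N)$--orbit via the stabilizer $\Sp(2l)\times\GL(N-2l)\times(\mathbb C^{2l}\otimes\mathbb C^{N-2l})$ rather than via the Pfaffian codimension $\binom{N-2l}{2}$, and works directly in the tangent-bundle fiber $\bigwedge^2 H_a$ (so your worry about the extra $\alpha$--parameter never arises: the affine rank-$2l$ stratum already has dimension $l(2N-1)-2l^2$, matching your projectivized-cone count).
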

\begin{proof}
	Given $[\bigwedge^{2}\mathbb C^{N}]_{2l}$ the space of rank--$2l$ skew--symmetric matrices of size $N$, from Sec.\ \ref{subsec:tangent orbits} we know that for any $l=2:\frac{N}{2}$ it holds 
	\[\Theta_{l,N}=\bigcup_{[a]\in \mathbb S_{N}^+}\left[T_{[a]}\mathbb S_{N}^+\right]_{2l}\simeq \bigcup_{[a]\in \mathbb S_{N}^+}\left[\bigwedge^{2}\mathbb C^{N}\right]_{2l} \ . \]
	For $l\geq 3$, any tangent $[q]\in \Theta_{l,N}$ belongs to a unique tangent space $T_{[a]}\mathbb S_{N}^+$, hence
	\[ \dim \Theta_{l,N}=\dim \mathbb S_{N}^+ + \dim \left[\bigwedge^{2}\mathbb C^{N}\right]_{2l} \ \ \ \ \ , \ \forall l \geq 3 \ .\]
	The space $[\bigwedge^{2}\mathbb C^{N}]_{2l}$ is the $\GL(N)$-orbit in $\mathbb C^{N}\otimes \mathbb C^{N}$ of the skew--symmetric matrix $C_l$ in \eqref{standard skewsymm matrix} having stabilizer isomorphic to $\Sp(2l)\times \GL(N-2l) \times (\mathbb C^{2l}\otimes \mathbb C^{N-2l})$. Therefore one gets $\dim \left[\bigwedge^2\mathbb C^{N}\right]_{2l} = l(2N-1)-2l^2$
	and the thesis straightforwardly follows.
\end{proof}

\begin{es}\label{S6 orbits} The Spinor variety $\mathbb S_{6}^+ \subset \mathbb P(\bigwedge^{ev}\mathbb C^6)\simeq \mathbb P^{31}$ has diameter $3$. Set $G=\Spin(12)$ and $v_{\omega_6}=e_1\wedge \ldots \wedge e_6$. The secant variety $\sigma_2(\mathbb S_6^+)$ stratifies in the $\Spin(12)$--orbit closures \[\mathbb S_6^+ \subset \overline{\Sigma_{2,6}}\subset \overline{\Theta_{3,6}}\subset \overline{\Sigma_{3,6}}=\sigma_2(\mathbb S_6^+)\] of dimensions respectively $15, 24, 30, 31$. This poset and the dimensions were already known to J.M. Landsberg and L. Manivel \cite{LM01} in the context of Legendrian varieties. Actually, as confirmed by the authors (which we thank for the confrontation), our arguments allow to recognize a misprint in \cite[Proposition 5.10]{LM01}, where the dimension of the orbit $\sigma_+$ (in the authors' notation, corresponding to our $\Sigma_{2,6}$ for $m=4$) is $5m+4$ instead of $5m+3$.
\end{es}

\begin{es}\label{S8 orbits} The Spinor variety $\mathbb S_8^+\subset \mathbb P(\bigwedge^{ev}\mathbb C^8)\simeq \mathbb P^{127}$ has diameter $4$: this is the only case in which the inclusion $\overline{\Theta_{3,8}}\subset \overline{\Theta_{4,8}} \cap \overline{\Sigma_{3,8}}$ actually is an equality.
	\begin{figure}[H]
		\begin{center}
			{\small \begin{tikzpicture}[scale=2.5]
				
				\node(S) at (0,0){{$\mathbb S_{8}^+$}};
				\node(dimS) at (-1.5,0){{$28$}};
				
				\node(t2) at (0,0.4){{$\Theta_{2,8}=\Sigma_{2,8}$}};
				\node(dimt2) at (-1.5,0.4){{$45$}};
				
				\node(t3) at (-0.6,0.8){{$\Theta_{3,8}$}};
				\node(dimt3) at (-1.5,0.8){{$55$}};
				
				\node(s3) at (0.6,1.2){{$\Sigma_{3,8}$}};
				\node(dims3) at (-1.5,1.2){{$56$}};
				
				\node(t4) at (-0.6,1.2){$\Theta_{4,8}$};
				
				\node(s4) at (0.6,1.6){{$\Sigma_{4,8}$}};
				\node(dimsd) at (-1.5,1.6){{$57$}};
				
				\path[font=\scriptsize,>= angle 90]
				(S) edge [->] node [left] {} (t2)
				(t2) edge [->] node [left] {} (t3)
				(t3) edge [->] node [left] {} (s3)
				(t3) edge [->] node [left] {} (t4)
				(t4) edge [->] node [left] {} (s4)
				(s3) edge [->] node [left] {} (s4);
				\end{tikzpicture}}
		\end{center}
		\caption{Poset graph of the $\Spin_{16}$--orbits in $\sigma_2(\mathbb S_{8}^+)$, and their dimensions.} \label{figure:graph for S8}
	\end{figure}
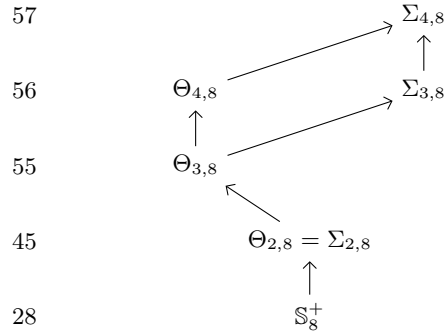 
\end{es}

\section{The $2$--nd Terracini locus of $\mathbb S_N^+$}\label{sec:terracini locus spinor}

\indent \indent In the following we determine the {\em Terracini locus}, introduced in \cite{ballico2021terracini,ballico2020terracini}. Its importance relies in the fact that it gives information on the singularities of points lying on bisecant lines.\\
\indent The {\em $2$--nd Terracini locus} of $\mathbb S_N^+$ is 
\[ \Terr_2(\mathbb S_N^+):=\overline{\left\{ \{p_1,p_2\}\in Hilb_2(\mathbb S_N^+) \ | \ \dim \langle T_{p_1}\mathbb S_N^+,T_{p_2}\mathbb S_N^+ \rangle \lneq \dim \sigma_2(\mathbb S_N^+)\right\}} \ . \]
We also recall the definition of the {\em $2$--nd abstract secant variety} of $\mathbb S_N^+$
\[ Ab\sigma_2(\mathbb S_N^+):=\left\{(\cal Z,p)\in Hilb_2(\mathbb S_N^+)\times \mathbb P\left(\bigwedge^{ed}E\right) \ | \ p \in \langle \cal Z \rangle\right\} \ , \]
together with the natural projections onto the two factors $\pi_1:Ab\sigma_2(\mathbb S_N^+)\rightarrow Hilb_2(\mathbb S_N^+)$ and $\pi_2:Ab\sigma_2(\mathbb S_N^+)\rightarrow \sigma_2(\mathbb S_N^+)\subset \mathbb P(\bigwedge^{ev}E)$. Notice that, with this definition, the $2$--nd abstract secant variety is smooth. \\
\indent The $2$--nd Terracini locus tells us where the differential of the projection from the abstract secant variety onto the secant variety drops rank: this information, combined with the identifiability, allows to deduce the smoothness on the locus of points which are both identifiable and outside the Terracini locus. In the following we consider $N\geq 7$ as for $N\leq 6$ the secant variety of lines coincides with the ambient space.

\begin{teo}\label{thm:terracini locus spinor}
	For any $N\geq 7$, the second Terracini locus $\Terr_2(\mathbb S_N^+)$ of the Spinor variety $\mathbb S_N^+$ corresponds to the distance--$2$ orbit closure $\overline{\Sigma_{2,N}}$. More precisely, in the above notation, it holds
	\[ \Terr_2(\mathbb S_N^+) = \left( \pi_1 \circ \pi_2^{-1}\right) \left( \overline{\Sigma_{2,N}} \right) \ . \]
\end{teo}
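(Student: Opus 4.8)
The plan is to establish the two inclusions $\Terr_2(\mathbb S_N^+) \subseteq \overline{\Sigma_{2,N}}$ and $\overline{\Sigma_{2,N}} \subseteq \Terr_2(\mathbb S_N^+)$ by reducing the computation of $\dim\langle T_{[a]}\mathbb S_N^+, T_{[b]}\mathbb S_N^+\rangle$ to the Hamming distance $l = d([a],[b])$. Since the Terracini locus is $\Spin_{2N}$--invariant (the span of tangent spaces at a $\Spin_{2N}$--translated pair is the translate of the span, so its dimension is constant on orbits of $Hilb_2(\mathbb S_N^+)$), and since by Corollary~\ref{cor:orbit partition of SxS} every pair of distinct pure spinors is $\Spin_{2N}$--conjugate to $([\bold{e}_{[N]}],[\bold{e}_{[N-2l]}])$ for a unique $l$, it suffices to compute $d_l := \dim\langle T_{[\bold{e}_{[N]}]}\mathbb S_N^+, T_{[\bold{e}_{[N-2l]}]}\mathbb S_N^+\rangle$ for each $l = 1 : \frac{N}{2}$ (and to handle the non-reduced, i.e. curvilinear, points of $Hilb_2(\mathbb S_N^+)$ separately via a limiting argument, which is where the closure in the definition of $\Terr_2$ enters).

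First I would set up the explicit description of the tangent space: at a pure spinor $[a]$ with $H_a \in \OG^+(N,V)$ one has $T_{[a]}\mathbb S_N^+ \simeq a \wedge \bigwedge^2 H_a$ sitting inside $\bigwedge^{ev}E$ (this is the affine cone description implicit in Sec.~\ref{subsec:tangent orbits}). For the standard representatives, $T_{[\bold{e}_{[N]}]}\mathbb S_N^+$ has affine cone $\mathbb C\,\bold{e}_{[N]} \oplus (\bold{e}_{[N]} \wedge \bigwedge^2 E)$, which is visibly $\langle \bold{e}_{[N]}\rangle \oplus \bigwedge^{N-2}E$; and $T_{[\bold{e}_{[N-2l]}]}\mathbb S_N^+$ has affine cone spanned by $\bold{e}_{[N-2l]}$ together with $\bold{e}_{[N-2l]} \wedge \bigwedge^2 H_{\bold{e}_{[N-2l]}}$, where $H_{\bold{e}_{[N-2l]}} = E_{N-2l} = \langle e_1,\dots,e_{N-2l},f_{N-2l+1},\dots,f_N\rangle$. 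The second wedge power $\bigwedge^2 E_{N-2l}$ splits as $\bigwedge^2\langle e_1,\dots,e_{N-2l}\rangle \oplus \langle e_i \wedge f_j\rangle \oplus \bigwedge^2\langle f_{N-2l+1},\dots,f_N\rangle$; wedging with $\bold{e}_{[N-2l]} = e_1 \wedge \dots \wedge e_{N-2l}$ kills the first two pieces (they contain a repeated $e_i$) and sends the third to $\bold{e}_{[N-2l]} \wedge \bigwedge^2\langle f_{N-2l+1},\dots,f_N\rangle$. Converting the $f$'s back into $e$'s via the identification $\bigwedge^{od}E^\vee \simeq \bigwedge^{ev}E$ (or equivalently reading $f_j$'s as contracted-out $e_j$'s), this is $\bold{e}_{[N-2l]} \wedge \bigwedge^{2l-2}\langle e_{N-2l+1},\dots,e_N\rangle$, i.e. the space of $\bold{e}_{[N]}$'s with exactly two of the last $2l$ factors deleted. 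The key combinatorial claim is then: for $l \geq 3$ the two tangent spaces intersect only in $\{0\}$ (as affine cones), so $d_l = 2\dim\mathbb S_N^+ + 1 = \dim\sigma_2(\mathbb S_N^+)$, hence the pair is \emph{not} in the Terracini locus; while for $l = 2$ one checks directly a non-transverse intersection giving $d_2 < \dim\sigma_2(\mathbb S_N^+)$, and for $l = 1$ the line $L([a],[b])$ lies in $\mathbb S_N^+$ and one again gets a drop. For $l = 2$ the cleanest route is to invoke Proposition~\ref{prop:contact locus for Sigma2}: a point of $\Sigma_{2,N}$ lies on a whole $6$--dimensional family of bisecant lines, so the fibre of $\pi_2$ over it is positive-dimensional and $\pi_1 \circ \pi_2^{-1}$ of $\overline{\Sigma_{2,N}}$ is forced into $\Terr_2$ by the fibre-dimension count on the smooth variety $Ab\sigma_2(\mathbb S_N^+)$ (equivalently, $d\pi_2$ drops rank exactly where $d_l < \dim\sigma_2$).

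The second inclusion, $\overline{\Sigma_{2,N}} \subseteq \Terr_2(\mathbb S_N^+)$, follows once we know $d_2 < \dim\sigma_2(\mathbb S_N^+)$ on reduced pairs of distance $2$, together with $d_1 < \dim\sigma_2(\mathbb S_N^+)$ on distance-$1$ pairs and a limiting argument handling the curvilinear length-$2$ schemes supported on $\mathbb S_N^+$ whose span is a tangent line into $\Theta_{2,N}$: such schemes are limits of reduced distance-$1$ pairs, so by semicontinuity of $\dim\langle T_{p_1}, T_{p_2}\rangle$ they too lie in $\Terr_2$, and their images fill $\mathbb S_N^+ = \Sigma_{1,N}$; taking the closure gives $\overline{\Sigma_{2,N}} = \mathbb S_N^+ \sqcup \Sigma_{2,N}$ (using Lemma~\ref{lemma:orbits inclusions}(1)). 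Conversely, the first inclusion says that on every pair of distance $l \geq 3$ (and hence on the dense locus of $Hilb_2$), the span of the tangent spaces is full-dimensional, so $\Terr_2$ cannot contain the image of any such pair; since $\overline{\Sigma_{2,N}}$ is exactly the orbit closure consisting of $\mathbb S_N^+$ and the distance-$2$ points, the two sets coincide, which also yields the stated identity $\Terr_2(\mathbb S_N^+) = (\pi_1 \circ \pi_2^{-1})(\overline{\Sigma_{2,N}})$ once one observes $\pi_2^{-1}(\overline{\Sigma_{2,N}})$ maps onto precisely the length-$2$ subschemes of Hamming type $\leq 2$. The main obstacle I expect is the $l = 2$ dimension count $d_2 < \dim\sigma_2(\mathbb S_N^+)$: the transversality failure is subtle and intertwined with the non-identifiability from Proposition~\ref{prop:contact locus for Sigma2}, so the argument must be set up to read off the rank drop of $d\pi_2$ from the positive-dimensional decomposition locus rather than from a bare-hands computation of the intersection $T_{[\bold{e}_{[N]}]}\mathbb S_N^+ \cap T_{[\bold{e}_{[N-4]}]}\mathbb S_N^+$, which would otherwise be the delicate step.
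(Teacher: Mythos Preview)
Your overall strategy (reduce by $\Spin_{2N}$--equivariance to the standard pair $([\bold{e}_{[N]}],[\bold{e}_{[N-2l]}])$ and compute the span of tangent spaces case by case) is reasonable, but your explicit description of the affine tangent cone is incorrect and this invalidates the core computation. You write $T_{[a]}\mathbb S_N^+ \simeq a \wedge \bigwedge^2 H_a$ and then assert that wedging $\bold{e}_{[N-2l]}$ with $\bigwedge^2 E_{N-2l}$ ``kills the first two pieces''. But the embedded tangent cone is not obtained by the exterior product with $a$; it is the image of the Lie algebra action $\mathfrak{so}(V)\cdot a \subset \bigwedge^{ev}E$, which is Clifford multiplication by $\bigwedge^2 V$ and therefore mixes wedge and contraction. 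Concretely, the affine cone of $T_{[\bold{e}_{[N-2l]}]}\mathbb S_N^+$ is spanned by $\bold{e}_{[N-2l]}$ together with (i) $\bold{e}_{[N-2l]}\wedge e_i\wedge e_j$ for $i,j>N-2l$, (ii) $f_i f_j\cdot \bold{e}_{[N-2l]}=\pm\bold{e}_{[N-2l]\setminus\{i,j\}}$ for $i,j\leq N-2l$, and (iii) $e_i f_j\cdot \bold{e}_{[N-2l]}=\pm\bold{e}_{([N-2l]\setminus\{j\})\cup\{i\}}$ for $j\leq N-2l<i$. This has the required dimension $1+\binom{N}{2}$, whereas your formula gives only $1+\binom{2l}{2}$. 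With the correct description the disjointness check for $l\geq 3$ is no longer the ``combinatorial claim'' you state, and you would need a new argument for it.

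The paper takes a different and much shorter route for the hard direction (dimension drops $\Rightarrow d([a],[b])\leq 2$): if $[x]\in T_{[a]}\mathbb S_N^+\cap T_{[b]}\mathbb S_N^+$ is a nonzero common tangent point, then $[x]$ is tangential-unidentifiable, hence by Theorem~\ref{thm:tangential-identifiable tangent orbit} it lies in $\overline{\Theta_{2,N}}$. Writing $[x]=[y+z]$ with $[y],[z]$ rank--$2$ points of $T_{[a]}\mathbb S_N^+\cap\mathbb S_N^+$ (and likewise for $[b]$) gives $d([a],[y])=d([b],[y])=1$, hence $d([a],[b])\leq 2$. This uses the tangential-identifiability already proved, and avoids any direct tangent-cone computation. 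Your treatment of the easy direction ($d\leq 2\Rightarrow$ drop) and your handling of the $l=2$ case via the positive-dimensional decomposition locus are fine, but the paper's direct argument (for $d=2$ there exists $[c]\in\mathbb S_N^+$ with $L([a],[c]),L([c],[b])\subset\mathbb S_N^+$, hence $[c]\in T_{[a]}\cap T_{[b]}$) is more elementary and does not rely on Proposition~\ref{prop:contact locus for Sigma2}.
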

\begin{proof}
	Given a point $[a+b]\in \sigma_2(\mathbb S_N^+)$ for certain $[a],[b]\in \mathbb S_N^+$, we show that $\dim \langle T_{[a]}\mathbb S_N^+,T_{[b]}\mathbb S_N^+ \rangle$ drops if and only if $[a+b]\in \overline{\Sigma_{2,N}}=\mathbb S_N^+\sqcup \Sigma_{2,N}$, that is if and only if $d([a],[b])\leq 2$. \\
	\indent If $d([a],[b])=1$, then any point on the line $L([a],[b])\subset \mathbb S_N^+$ is direction of the curve defined by such line, that is $L([a],[b])\subset T_{[a]}\mathbb S_N^+\cap T_{[b]}\mathbb S_N^+$.\\
	\indent If $d([a],[b])=2$, then there exists $[c]\in \mathbb S_N^+$ such that $L([a],[c]), L([c],[b])\subset \mathbb S_N^+$. In particular, from the previous case we deduce $[c]\in T_{[a]}\mathbb S_N^+\cap T_{[b]}\mathbb S_N^+$.\\
	\indent On the other hand, if the dimension drops, then there exists a common non-zero tangent point $[x]\in T_{[a]}\mathbb S_N^+\cap T_{[b]}\mathbb S_N^+$. In particular, $[x]$ is not tangential-identifiable, hence $[x] \in \Theta_{2,N}$. From the definition \eqref{def:tangent orbits} of $\Theta_{2,N}\cap T_{[a]}\mathbb S_N^+$ as the set of rank--$4$ skew-symmetric matrices in $T_{[a]}\mathbb S_N^+$, we know that $[x]=[y+z]$ for $[y],[z]\in \Theta_{1,N}\cap T_{[a]}\mathbb S_N^+=\mathbb S_N^+ \cap T_{[a]}\mathbb S_N^+$ skew-symmetric matrices of rank $2$. In particular, $[a+y]$ and $[a+z]$ lie in $\mathbb S_N^+$, that is $d([a],[y])=d([a],[z])=1$. The same argument shows that also $d([b],[y])=d([b],[z])=1$. We conclude that $d([a],[b])\leq 2$, hence the thesis.

\end{proof}

\section{Results on the singular locus of $\sigma_2(\mathbb S_{N}^+)$}\label{sec:sing locus spinor}

\indent \indent This section is devoted to study the singular locus of the secant variety of lines $\sigma_2(\mathbb S_{N}^+)$ to a Spinor variety $\mathbb S_{N}^+$. We use results from previous sections.

\begin{obs*}
	As defective cases, the secant varieties $\sigma_2(\mathbb S_4^+)=\tau(\mathbb S_4^+)=\mathbb P^{7}$ and $\sigma_2(\mathbb S_{5}^+)=\tau(\mathbb S_5^+)=\mathbb P^{15}$ overfill the ambient space. On the other hand, the secant variety $\sigma_2(\mathbb S_6^+)=\mathbb P^{31}$ perfectly fills the ambient space (hence it is smooth), but the tangential variety $\tau(\mathbb S_6^+)$ is a quartic hypersurface in it. Accordingly to Remark \ref{rmk:S_6 tangential-identifiability}, $\mathbb S_6^+$ has been widely studied in \cite{LM01, LM07} and it has been proven that $\Sing(\tau(\mathbb S_6^+))=\overline{\Sigma_{2,6}}$ (corresponding to $\sigma_+$ in \cite{LM07}).
\end{obs*}

\indent According to the above remark, we assume $N\geq 7$, so that $\sigma_2(\mathbb S_{N}^+)\subsetneq\mathbb P^{2^{N-1}-1}$. In the following we consider the alternative definition of abstract secant variety
\[ Ab\sigma_2(\mathbb S_N^+):= \overline{\left\{ ([a],[b],[q])\in \big( \mathbb S_N^+ \big)^2_{/\mathfrak{S}_2} \times \mathbb P\left( \bigwedge^{ev}E\right) \ \bigg| \ [q]\in \langle [a],[b]\rangle \right\}} \ , \]
which is smooth only outside the preimage of the diagonal $\Delta_X\subset \big( \mathbb S_N^+ \big)^2_{/\mathfrak{S}_2}$ via the projection onto the first factor. In particular, given $\pi$ the projection onto the second factor, the preimage $\pi^{-1}(\sigma_2(\mathbb S_N^+)\setminus \overline{\Sigma_{2,N}})$ is smooth in $Ab\sigma_2(\mathbb S_N^+)$.

\begin{lemma}\label{lemma:singularity of Sigma_2}
	For any $N\geq 7$, the distance--$2$ orbit $\Sigma_{2,N}$ lies in the singular locus $\Sing(\sigma_2(\mathbb S_N^+))$ of the secant variety of lines to the Spinor variety $\mathbb S_{N}^+$. In particular, 
	\[ \overline{\Sigma_{2,N}}\subset \Sing\left(\sigma_2(\mathbb S_{N}^+)\right) \ .\]
\end{lemma}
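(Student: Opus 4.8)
The plan is to reduce to one tangent-space estimate. Since $\Sigma_{2,N}$ is a single $\Spin_{2N}$-orbit, $\Spin_{2N}$ acts on $\mathbb P(\bigwedge^{ev}E)$ preserving $\sigma_2(\mathbb S_N^+)$, and $\Sing(\sigma_2(\mathbb S_N^+))$ is closed, it suffices to prove that the representative $[q_2]=[e_1\wedge e_2+e_3\wedge e_4]$ is a singular point; the ``in particular'' then follows. For $N\ge 7$ one has $\dim\sigma_2(\mathbb S_N^+)=N(N-1)+1$, so, passing to affine cones, I want to show $\dim\widehat{T_{[q_2]}\sigma_2(\mathbb S_N^+)}>N(N-1)+2$. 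I will do this by exhibiting two explicit linear subspaces of $\widehat{T_{[q_2]}\sigma_2(\mathbb S_N^+)}$ whose span is large enough. The two inputs, both obtained by differentiating, along curves in $\mathbb S_N^+$, the obvious $1$-parameter families of lines through $[q_2]$, are: (i) $\widehat{T_{[a]}\mathbb S_N^+}+\widehat{T_{[b]}\mathbb S_N^+}\subseteq\widehat{T_{[q_2]}\sigma_2(\mathbb S_N^+)}$ whenever $[q_2]\in\langle[a],[b]\rangle$ with $[a],[b]\in\mathbb S_N^+$, and (ii) $\widehat{T_{[p]}\mathbb S_N^+}\subseteq\widehat{T_{[q_2]}\sigma_2(\mathbb S_N^+)}$ whenever $[q_2]\in T_{[p]}\mathbb S_N^+$ (here $\widehat{T_{[p]}\mathbb S_N^+}$ is a linear space containing $q_2$ and contained in $\widehat{\sigma_2(\mathbb S_N^+)}$, so it is contained in its tangent space at $q_2$).

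For the first subspace I would use the decomposition $q_2=e_1\wedge e_2+e_3\wedge e_4$, whose summands are pure spinors at Hamming distance $2$. Writing the orbit tangent space as $\widehat{T_{[a]}\mathbb S_N^+}=\bigwedge^2 V\cdot a$ via the Clifford action, a direct computation gives $\widehat{T_{[e_1\wedge e_2]}\mathbb S_N^+}=\mathbb C\mathbbm 1\oplus(e_1\wedge E+e_2\wedge E)\oplus\bigl(e_1\wedge e_2\wedge\bigwedge^2\langle e_3,\ldots,e_N\rangle\bigr)$ and the analogous formula with $\{3,4\}$ in place of $\{1,2\}$. Comparing these degree by degree shows their intersection is $6$-dimensional (spanned by $\mathbbm 1$, by $e_i\wedge e_j$ with $i\in\{1,2\}$, $j\in\{3,4\}$, and by $\bold{e}_{[4]}$), so that $U:=\widehat{T_{[e_1\wedge e_2]}\mathbb S_N^+}+\widehat{T_{[e_3\wedge e_4]}\mathbb S_N^+}$ has dimension $2\bigl(\binom N2+1\bigr)-6=N(N-1)-4$. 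I note that $U$ is concentrated in wedge-degrees $\le 4$, and that its degree-$4$ part is spanned by the $\bold{e}_I$ with $\{1,2\}\subseteq I$ or $\{3,4\}\subseteq I$.

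For the second subspace I would use the entire family of decompositions of $[q_2]$. Since $H_{q_2}=\langle f_5,\ldots,f_N\rangle$, the pure spinors $[p]$ with $H_p\supseteq H_{q_2}$ are exactly those with $p\in\bigwedge^{ev}\langle e_1,\ldots,e_4\rangle$, and these form a copy of the quadric $\mathbb S_4^+\cong\Q^6$ spanning $\mathbb P(\bigwedge^{ev}\langle e_1,\ldots,e_4\rangle)=\mathbb P^7$; by Proposition \ref{prop:contact locus for Sigma2} every pure spinor occurring in a decomposition of $[q_2]$ lies on this $\Q^6$. As $[q_2]$ also lies in this $\mathbb P^7$, for each $[p]$ on the $\Q^6$ the line $L([p],[q_2])$ is either an honest secant (case (i)) or tangent at $[p]$ (case (ii)), so $\widehat{T_{[p]}\mathbb S_N^+}\subseteq\widehat{T_{[q_2]}\sigma_2(\mathbb S_N^+)}$ in all cases. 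For such $[p]$, Clifford multiplication by $e_k\wedge e_l\in\bigwedge^2 V$ with $k,l\ge 5$ sends $p$ to $\pm e_k\wedge e_l\wedge p$, hence $\bigwedge^2\langle e_5,\ldots,e_N\rangle\wedge p\subseteq\widehat{T_{[p]}\mathbb S_N^+}$. Summing over the linearly non-degenerate $\Q^6$ yields $W:=\bigwedge^2\langle e_5,\ldots,e_N\rangle\wedge\bigwedge^{ev}\langle e_1,\ldots,e_4\rangle\subseteq\widehat{T_{[q_2]}\sigma_2(\mathbb S_N^+)}$.

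It then remains to compare $U$ and $W$. Beyond $U$, the space $W$ contributes $\bigwedge^2\langle e_5,\ldots,e_N\rangle$ in wedge-degree $2$ (new, since those basis vectors have no index in $\{1,2,3,4\}$), the basis vectors $\bold{e}_{\{i,j,k,l\}}$ with $\{i,j\}$ a mixed pair from $\{1,2,3,4\}$ and $k,l\ge 5$ in degree $4$ (new, by the description of $U$ in degree $4$), and $\bigwedge^2\langle e_5,\ldots,e_N\rangle\wedge\bold{e}_{[4]}$ in degree $6$ (new, since $U$ has no component in degree $>4$): altogether $6\binom{N-4}{2}$ new dimensions, in pairwise distinct degrees. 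Therefore $\dim\widehat{T_{[q_2]}\sigma_2(\mathbb S_N^+)}\ge N(N-1)-4+6\binom{N-4}{2}=N(N-1)-4+3(N-4)(N-5)$, which exceeds $N(N-1)+2$ precisely when $(N-4)(N-5)>2$, i.e. for every $N\ge 7$. This shows $[q_2]$ is singular, whence $\Sigma_{2,N}\subseteq\Sing(\sigma_2(\mathbb S_N^+))$ and, by closedness, $\overline{\Sigma_{2,N}}\subseteq\Sing(\sigma_2(\mathbb S_N^+))$. The step I expect to be most delicate is the degree-by-degree bookkeeping needed to guarantee that $U$ and $W$ overlap only in the expected part, so that the $6\binom{N-4}{2}$ extra dimensions are genuinely new; the margin $3(N-4)(N-5)-6$ is tight for $N=7,8$, which is exactly why one cannot make do with a single auxiliary decomposition and must exploit the full $\Q^6$-family.
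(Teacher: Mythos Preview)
Your proof is correct and takes a genuinely different route from the paper's. The paper argues by contradiction via the abstract secant variety: assuming $\Sigma_{2,N}$ smooth, the projection $\pi\colon Ab\sigma_2(\mathbb S_N^+)\to\sigma_2(\mathbb S_N^+)$ would restrict to a morphism between smooth varieties of the same dimension on the complement of $\pi^{-1}(\mathbb S_N^+)$, so the locus where $d\pi$ drops rank would be a determinantal divisor; but this locus is $\pi^{-1}(\overline{\Sigma_{2,N}})$, whose codimension is $\frac{N(N-1)}{2}-4N+10>1$ for $N\ge 7$ (using Propositions~\ref{prop:contact locus for Sigma2} and \ref{prop:orbit dimension in sec2}), a contradiction. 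Your argument is instead direct: you produce an explicit subspace of the affine tangent space at $q_2$ of dimension at least $N(N-1)-4+6\binom{N-4}{2}$, by combining the Terracini span $U=\widehat{T_{[e_1\wedge e_2]}\mathbb S_N^+}+\widehat{T_{[e_3\wedge e_4]}\mathbb S_N^+}$ with the contributions $W$ coming from the whole $\Q^6$-family of decompositions of $[q_2]$. The paper's approach reuses the (tangential-)identifiability theorems and avoids any tangent-space computation; yours is essentially self-contained (it only needs the $\Q^6$ description of the decomposition/tangential locus of $[q_2]$) and gives a quantitative lower bound on $\dim T_{[q_2]}\sigma_2(\mathbb S_N^+)$, at the price of the degree-by-degree bookkeeping you flagged.
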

\begin{proof}
	Clearly, the Spinor variety is singular in its secant variety of lines, hence we just have to prove the singularity of the distance--$2$ orbit $\Sigma_{2,N}$. We assume by contradiction that $\Sigma_{2,N}$ is smooth: then the poset in Theorem \ref{thm:orbit partition of sec} implies that all the open subset $\sigma_2(\mathbb S_N^+)\setminus \mathbb S_N^+$ is smooth. Consider the projection from the abstract secant variety 
	onto the second factor
	\[ \begin{matrix}
	\pi: & Ab\sigma_2(\mathbb S_{N}^+) & \longrightarrow & \sigma_2(\mathbb S_{N}^+) \\
	& ([a],[b],[q]) & \mapsto & [q]	
	\end{matrix} \ . \]
	\indent The (tangential-)identifiability of the orbits $\Sigma_{l,N}$ and $\Theta_{l,N}$ for $l\geq 3$ (cf. Theorem \ref{thm:identifiable secant orbits} and Theorem \ref{thm:tangential-identifiable tangent orbit}) implies that the restriction of $\pi$ to the open subset $\pi^{-1}(\sigma_2(\mathbb S_N^+)\setminus \overline{\Sigma_{2,N}})$ is a bijection of smooth open subsets, hence it is an isomorphism. 
	On the other hand, the orbit $\Sigma_{2,N}$ is unidentifiable and any of its points has $6$--dimensional decomposition locus (cf. Proposition \ref{prop:contact locus for Sigma2}), thus the differential $d(\pi)_{([a],[b],[q])}$ drops rank exactly at the points in the preimage $\pi^{-1}(\overline{\Sigma_{2,N}})$. \\
	\indent It follows that the restriction  
	\[{\pi}_{|}: Ab\sigma_2(\mathbb S_{N}^+)\setminus \pi^{-1}(\mathbb S_{N}^+)\longrightarrow \sigma_2(\mathbb S_{N}^+)\setminus \mathbb S_{N}^+ \]
	is a morphism of smooth varieties of the same dimension: in particular, the locus of points where the rank of the differential drops is a (determinantal) divisor. We show that the preimage
	\[\left\{([a],[b],[q])\in Ab\sigma_2(\mathbb S_{N}^+)\setminus \pi^{-1}(\mathbb S_{N}^+) \ | \ \rk d(\pi)_{([a],[b],[q])}< N(N-1)\right\}={\pi_{|}}^{-1}\left(\overline{\Sigma_{2,N}}\right) \]
	cannot be a divisor (leading to a contradiction). Indeed, from Proposition \ref{prop:orbit dimension in sec2} we know that $\dim \Sigma_{2,N}=\frac{N(N-1)}{2}+4N-15$, and from Proposition \ref{prop:contact locus for Sigma2} that the decomposition locus of any point in $\Sigma_{2,N}$ is $6$--dimensional, thus from the fiber dimension theorem we get $\dim\pi_{|}^{-1}(\overline{\Sigma_{2,N}})=\dim \Sigma_{2,N}+6=\frac{N(N-1)}{2}+4N-9$ and 
	\[\dim Ab\sigma_2(\mathbb S_{N}^+) - \dim\pi_{|}^{-1}(\overline{\Sigma_{2,N}}) = \frac{N(N-1)}{2}-4N +10 \stackrel{N\geq 7}{\gneq} 1 \ . \]
	Thus ${\pi}_{|}^{-1}(\overline{\Sigma_{2,N}})$ cannot be a divisor for $N\geq7$, giving the contradiction. We deduce the inclusion $\Sigma_{2,N}\subset \Sing(\sigma_2(\mathbb S_{N}^+))$.
\end{proof}

\begin{lemma}\label{lemma:smoothness secant orbit spinor}
	For any $N\geq 7$ and any $l=3:\frac{N}{2}$, the secant orbit $\Sigma_{l,N}$ is smooth in the secant variety of lines $\sigma_2(\mathbb S_N^+)$. In particular, the singular locus of the secant variety of lines to the Spinor variety $\mathbb S_N^+$ lies in the tangential variety:
	\[ \Sing\left(\sigma_2(\mathbb S_N^+)\right) \subset \tau(\mathbb S_N^+) \ . \]
\end{lemma}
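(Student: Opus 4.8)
The strategy is to upgrade the bijection-implies-isomorphism argument already used in Lemma \ref{lemma:singularity of Sigma_2}. Recall that $\dim \sigma_2(\mathbb S_N^+) = N(N-1)+1$ for $N\geq 7$, and that the orbits $\Sigma_{l,N}$ and $\Theta_{l,N}$ for $l\geq 3$ are all disjoint and fill out the complement $\sigma_2(\mathbb S_N^+)\setminus \overline{\Sigma_{2,N}}$. First I would consider the projection $\pi: Ab\sigma_2(\mathbb S_N^+)\to \sigma_2(\mathbb S_N^+)$ onto the second factor and restrict to the open subset $\pi^{-1}\big(\sigma_2(\mathbb S_N^+)\setminus \overline{\Sigma_{2,N}}\big)$, which is smooth in $Ab\sigma_2(\mathbb S_N^+)$ (the preimage of the diagonal $\Delta_X$, where $Ab\sigma_2$ may fail to be smooth, lies inside $\pi^{-1}(\mathbb S_N^+)\subset\pi^{-1}(\overline{\Sigma_{2,N}})$). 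By Theorem \ref{thm:identifiable secant orbits} and Theorem \ref{thm:tangential-identifiable tangent orbit}, every point $[q]$ in $\Sigma_{l,N}$ (resp.\ $\Theta_{l,N}$) for $l\geq 3$ admits a unique decomposition (resp.\ lies on a unique tangent line), so this restriction $\pi_\circ: \pi^{-1}(\sigma_2\setminus\overline{\Sigma_{2,N}}) \to \sigma_2(\mathbb S_N^+)\setminus\overline{\Sigma_{2,N}}$ is a bijective morphism from a smooth variety. Since $Ab\sigma_2(\mathbb S_N^+)$ has dimension $\dim\sigma_2(\mathbb S_N^+)$ (the generic fibre of $\pi$ is a point by generic identifiability), by Zariski's main theorem / a bijective morphism of varieties with the target normal being an isomorphism — or more concretely, since the source is smooth of the same dimension and $\pi_\circ$ is bijective, it suffices to note the target is a constructible set whose smooth locus is what we are after — one concludes that $\sigma_2(\mathbb S_N^+)\setminus\overline{\Sigma_{2,N}}$ is smooth, hence $\Sigma_{l,N}$ is smooth in $\sigma_2(\mathbb S_N^+)$ for all $l\geq 3$.

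To make the implication from \emph{bijective morphism out of a smooth source} to \emph{smoothness of the target} rigorous, I would either invoke that $\sigma_2(\mathbb S_N^+)$ is a $\Spin_{2N}$-variety so its smooth locus is a union of orbits, and then show the differential $d\pi$ is an isomorphism at one (hence every) point of each orbit $\pi^{-1}(\Sigma_{l,N})$ and $\pi^{-1}(\Theta_{l,N})$, $l\geq 3$; or else use that a bijective birational morphism from a smooth (hence normal) variety onto a variety is an isomorphism onto the smooth locus of the normalization and argue that $\sigma_2(\mathbb S_N^+)\setminus\overline{\Sigma_{2,N}}$ is normal because it is covered by orbit closures minus their boundaries. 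The cleanest route is the orbit argument: by homogeneity under $\Spin_{2N}$ it is enough to check smoothness of $\sigma_2(\mathbb S_N^+)$ at a single representative of $\Sigma_{l,N}$ and of $\Theta_{l,N}$, and the injectivity of $\pi_\circ$ together with the smoothness of the source forces $d\pi$ to be injective there; surjectivity of $d\pi$ then follows by comparing dimensions, since $T_{[q]}\sigma_2(\mathbb S_N^+)$ has dimension at least $\dim\sigma_2(\mathbb S_N^+) = \dim Ab\sigma_2(\mathbb S_N^+)$.

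The second assertion, $\Sing(\sigma_2(\mathbb S_N^+))\subset\tau(\mathbb S_N^+)$, is then immediate from the poset in Theorem \ref{thm:orbit partition of sec}: the complement of $\tau(\mathbb S_N^+)$ inside $\sigma_2(\mathbb S_N^+)$ is precisely $\bigsqcup_{l=3}^{N/2}\Sigma_{l,N}$ (together with $\Sigma_{2,N}$ only if it were not contained in $\tau$, but $\Sigma_{2,N}=\Theta_{2,N}\subset\tau(\mathbb S_N^+)$), so once every $\Sigma_{l,N}$ with $l\geq 3$ is shown to be contained in the smooth locus, any singular point must lie in $\tau(\mathbb S_N^+)$. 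Combined with Lemma \ref{lemma:singularity of Sigma_2} this pins the singular locus between $\overline{\Sigma_{2,N}}$ and $\tau(\mathbb S_N^+)$, which is exactly Corollary \ref{cor:sing locus spinor}. The main obstacle I anticipate is the careful handling of where $Ab\sigma_2(\mathbb S_N^+)$ is actually smooth and ensuring the bijective-morphism argument is not circular: one must verify that the only locus where $\pi$ could fail to be étale — beyond the diagonal and the unidentifiable orbit $\Sigma_{2,N}$ — is empty, which is exactly what the identifiability and tangential-identifiability theorems guarantee, but the bookkeeping with the non-reduced schemes underlying tangent points needs to be stated precisely.
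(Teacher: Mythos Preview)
Your overall architecture is right---restrict the projection $\pi$ from the abstract secant variety to an open piece where it is bijective and then conclude smoothness of the image---but the crucial step contains a genuine gap. The assertion ``the injectivity of $\pi_\circ$ together with the smoothness of the source forces $d\pi$ to be injective there'' is false in general: the normalization $\mathbb A^1\to \{y^2=x^3\}$ is a bijective morphism from a smooth source whose differential vanishes at the origin. Bijectivity of a morphism out of a smooth variety does \emph{not} imply smoothness of the target; one would need the target to be normal, which is precisely what is in question. None of the alternatives you list (Zariski's main theorem, orbit-by-orbit checking, normality of orbit closures) escapes this circularity without an independent computation of the differential.

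The paper supplies exactly that missing ingredient, and this is the point you should incorporate. It restricts to the smaller open set $\sigma_2(\mathbb S_N^+)\setminus \tau(\mathbb S_N^+)=\bigsqcup_{l\geq 3}\Sigma_{l,N}$ (which is all the lemma claims---note that smoothness of the tangent orbits $\Theta_{l,N}$ for $l\geq 3$ is \emph{not} asserted here and is in fact the content of Conjecture \ref{conj:sing locus spinor}). Over this set the restricted projection $\pi'$ is bijective by identifiability (Theorem \ref{thm:identifiable secant orbits}). Then Theorem \ref{thm:terracini locus spinor} on the second Terracini locus gives, for every $[a+b]\in \sigma_2(\mathbb S_N^+)\setminus\tau(\mathbb S_N^+)$, that $T_{[a]}\mathbb S_N^+\cap T_{[b]}\mathbb S_N^+=\{0\}$; hence $d\pi'$ sends $T_{[a]}\mathbb S_N^+\times T_{[b]}\mathbb S_N^+$ injectively to $T_{[a]}\mathbb S_N^+\oplus T_{[b]}\mathbb S_N^+$, and $\pi'$ is an isomorphism onto a smooth open set. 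Your proposal never invokes the Terracini locus, and without it (or an equivalent direct verification that $d\pi$ is injective) the argument does not close.
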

\begin{proof}
	From Proposition \ref{thm:identifiable secant orbits} we know that the secant orbits $\Sigma_{l,N}$ for $l=3:N$ are identifiable. Moreover, from the poset in Figure \ref{figure:graph} it holds
	\[\Sigma_{3,N}\sqcup \ldots \sqcup \Sigma_{\frac{N}{2},N}=\sigma_2(\mathbb S_N^+)\setminus \tau(\mathbb S_N^+) \ . \] 
	Then the projection $\pi:Ab\sigma_2(\mathbb S_N^+)\rightarrow \sigma_2(\mathbb S_N^+)$ from the abstract secant variety onto the second factor restricts to a bijection
	\[ \pi': \pi^{-1}\bigg(\sigma_2(\mathbb S_N^+)\setminus \tau(\mathbb S_N^+)\bigg)\rightarrow \sigma_2(\mathbb S_N^+)\setminus \tau(\mathbb S_N^+) \ . \]
	Finally, the points in $\sigma_2(\mathbb S_N^+)\setminus \tau(\mathbb S_N^+)$ are outside the Terracini locus $\Terr_2(\mathbb S_N^+)$ (cf. Theorem \ref{thm:terracini locus spinor}), thus for any point $[p+q]\in \sigma_2(\mathbb S_N^+)\setminus \tau(\mathbb S_N^+)$ it holds $T_{p}\mathbb S_N^+ \cap T_{q}\mathbb S_N^+=\{0\}$. In particular, for such points the differential $d\pi'_{([p],[q],[p+q])}$ maps $T_p\mathbb S_N^+\times T_q\mathbb S_N^+$ to $T_p\mathbb S_N^+\oplus T_q\mathbb S_N^+$, hence it is injective. It follows that $\pi'$ is an isomorphism and the open subset $\sigma_2(\mathbb S_N^+)\setminus \tau(\mathbb S_N^+)$ is smooth in the secant variety of lines.
\end{proof}

\indent Collecting Lemma \ref{lemma:singularity of Sigma_2} and Lemma \ref{lemma:smoothness secant orbit spinor} we get the following bound on the singular locus.
\begin{cor}\label{cor:sing locus spinor}
	For any $N\geq 7$, the singular locus of the secant variety of lines $\sigma_2(\mathbb S_N^+)$ to the Spinor variety $\mathbb S_N^+$ is such that
	\[ \overline{\Sigma_{2,N}} \subset \Sing\left( \sigma_2(\mathbb S_N^+)\right) \subset \tau(\mathbb S_N^+) \ . \]
	\end{cor}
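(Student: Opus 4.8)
The plan is to combine the two preceding lemmas, reading off the relevant loci from the orbit stratification of $\sigma_2(\mathbb S_N^+)$. First I would invoke Lemma~\ref{lemma:singularity of Sigma_2}, which already gives the lower inclusion $\overline{\Sigma_{2,N}}\subset\Sing(\sigma_2(\mathbb S_N^+))$ for $N\geq 7$: recall $\overline{\Sigma_{2,N}}=\mathbb S_N^+\sqcup\Sigma_{2,N}$, that $\mathbb S_N^+$ is automatically singular in $\sigma_2(\mathbb S_N^+)$ by the classical fact $\sigma_{r-1}(X)\subset\Sing(\sigma_r(X))$ (valid since $\sigma_2(\mathbb S_N^+)$ is not a linear space for $N\geq 7$), and that the remaining points $\Sigma_{2,N}$ are singular by that lemma.

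For the reverse inclusion I would use Lemma~\ref{lemma:smoothness secant orbit spinor}: each secant orbit $\Sigma_{l,N}$ with $3\leq l\leq\frac{N}{2}$ consists of smooth points of $\sigma_2(\mathbb S_N^+)$. From the poset in Figure~\ref{figure:graph} (Theorem~\ref{thm:orbit partition of sec}) one reads that
\[ \sigma_2(\mathbb S_N^+)\setminus\tau(\mathbb S_N^+)=\bigsqcup_{l=3}^{N/2}\Sigma_{l,N}, \]
since $\tau(\mathbb S_N^+)$ is the union of $\mathbb S_N^+$ with all tangent orbits $\Theta_{l,N}$ and $\Sigma_{2,N}=\Theta_{2,N}$. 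Hence every point outside $\tau(\mathbb S_N^+)$ is smooth, i.e. $\Sing(\sigma_2(\mathbb S_N^+))\subset\tau(\mathbb S_N^+)$. Putting the two inclusions together yields the statement.

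Since both lemmas are already at hand, the corollary itself presents no obstacle; the genuine work lies upstream, and I would expect Lemma~\ref{lemma:singularity of Sigma_2} to be the delicate ingredient. The smoothness side follows cleanly from identifiability (Theorem~\ref{thm:identifiable secant orbits}) together with the Terracini computation (Theorem~\ref{thm:terracini locus spinor}), so that the projection $\pi\colon Ab\sigma_2(\mathbb S_N^+)\to\sigma_2(\mathbb S_N^+)$ is an isomorphism over $\sigma_2(\mathbb S_N^+)\setminus\tau(\mathbb S_N^+)$. Proving $\Sigma_{2,N}\subset\Sing(\sigma_2(\mathbb S_N^+))$ instead requires the observation that the rank-drop locus of $d\pi$ away from $\pi^{-1}(\mathbb S_N^+)$ would be a determinantal divisor if $\Sigma_{2,N}$ were smooth, combined with the dimension estimate $\dim Ab\sigma_2(\mathbb S_N^+)-\dim\pi^{-1}(\overline{\Sigma_{2,N}})=\frac{N(N-1)}{2}-4N+10>1$ for $N\geq 7$ — which is exactly where the hypothesis $N\geq 7$ enters.
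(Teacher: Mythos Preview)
Your proposal is correct and matches the paper's approach exactly: the corollary is obtained simply by combining Lemma~\ref{lemma:singularity of Sigma_2} (for the lower inclusion) with Lemma~\ref{lemma:smoothness secant orbit spinor} (for the upper inclusion), using the orbit poset to identify $\sigma_2(\mathbb S_N^+)\setminus\tau(\mathbb S_N^+)=\bigsqcup_{l\geq 3}\Sigma_{l,N}$. Your additional recap of the arguments behind the two lemmas is accurate but goes beyond what the corollary itself requires.
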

We conclude this work with a conjecture on what the singular locus of $\sigma_2(\mathbb S_N^+)$ actually is, motivated by the arguments apprearing in the appendix. 

\begin{conjecture}\label{conj:sing locus spinor}
	For any $N\geq 7$, the singular locus of the secant variety of lines to the Spinor variety $\mathbb S_N^+$ is the closure of the distance--$2$ orbit, i.e.
	\[ \Sing\left(\sigma_2(\mathbb S_N^+)\right)=\overline{\Sigma_{2,N}}=\mathbb S_N^+ \sqcup \Sigma_{2,N} \ . \]
\end{conjecture}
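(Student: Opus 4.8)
The plan is to deduce the conjectural equality from Corollary~\ref{cor:sing locus spinor} by reducing it to the smoothness of a single orbit, and then to attack that by a tangent-space computation. Since the smooth locus of $\sigma_2(\mathbb S_N^+)$ is open, dense and preserved by $\Spin_{2N}$, its complement $\Sing(\sigma_2(\mathbb S_N^+))$ is a closed $\Spin_{2N}$--invariant set, hence a union of orbit closures; by Corollary~\ref{cor:sing locus spinor} it lies between $\overline{\Sigma_{2,N}}$ and $\tau(\mathbb S_N^+)$. By Theorem~\ref{thm:orbit partition of sec} one has $\tau(\mathbb S_N^+)\setminus\overline{\Sigma_{2,N}}=\bigsqcup_{l=3}^{N/2}\Theta_{l,N}$, and by Lemma~\ref{lemma:orbits inclusions} the chain of closures $\Theta_{3,N}\subset\overline{\Theta_{4,N}}\subset\cdots\subset\overline{\Theta_{N/2,N}}=\tau(\mathbb S_N^+)$ holds. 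Because each orbit is either entirely singular or entirely smooth (smoothness is constant along a $\Spin_{2N}$--orbit) and $\Sing(\sigma_2(\mathbb S_N^+))$ is closed, one gets $\Sing(\sigma_2(\mathbb S_N^+))=\overline{\Sigma_{2,N}}$ if and only if $\Theta_{3,N}$ consists of smooth points, i.e. if and only if its representative $[q_3]=[e_1\wedge e_2+e_3\wedge e_4+e_5\wedge e_6]$ is a smooth point of $\sigma_2(\mathbb S_N^+)$. As the Zariski tangent space at every point of $\sigma_2(\mathbb S_N^+)$ has dimension at least $\dim\sigma_2(\mathbb S_N^+)=N(N-1)+1$, this amounts exactly to the \emph{upper} bound $\dim\widehat T_{[q_3]}\sigma_2(\mathbb S_N^+)\leq N(N-1)+2$ on the affine tangent space.

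To locate this tangent space I would work in the exponential chart $\omega\mapsto\exp_\wedge(\omega)=\sum_{k\geq 0}\tfrac1{k!}\omega^{\wedge k}$ of $\mathbb S_N^+$ near $\mathbbm 1$ (the pure spinor attached by Proposition~\ref{prop:from MFI to pure spinor} to a skew-symmetric form, with $p=0$, is exactly $\exp_\wedge$ of the associated $2$--vector), and use that $[q_3]=\lim_{t\to 0}[\exp_\wedge(t\omega_0)-\mathbbm 1]$ with $\omega_0=e_1\wedge e_2+e_3\wedge e_4+e_5\wedge e_6$, the points $[\exp_\wedge(t\omega_0)-\mathbbm 1]$ lying on bisecant lines at Hamming distance $3$, hence being smooth points of $\sigma_2(\mathbb S_N^+)$ (Lemma~\ref{lemma:smoothness secant orbit spinor}) with Terracini tangent space $\big(\mathbb C\mathbbm 1\oplus\bigwedge^2 E\big)\oplus\exp_\wedge(t\omega_0)\wedge\big(\mathbb C\mathbbm 1\oplus\bigwedge^2 E\big)$. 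The limit
\[ U_0 \ := \ \lim_{t\to 0}\Big(\big(\mathbb C\mathbbm 1\oplus{\textstyle\bigwedge}^2 E\big)\oplus\exp_\wedge(t\omega_0)\wedge\big(\mathbb C\mathbbm 1\oplus{\textstyle\bigwedge}^2 E\big)\Big) \]
is already contained in $\widehat T_{[q_3]}\sigma_2(\mathbb S_N^+)$ and has dimension exactly $N(N-1)+2$ (a limit of subspaces of constant dimension); unwinding $\exp_\wedge(t\omega_0)=\mathbbm 1+t\omega_0+\tfrac{t^2}2\omega_0^{\wedge2}+\tfrac{t^3}6\omega_0^{\wedge3}$ one identifies $U_0=\mathbb C\mathbbm 1\oplus\bigwedge^2 E\oplus(\omega_0\wedge\bigwedge^2 E)\oplus\mathbb C\,\omega_0^{\wedge3}$, the point being that $\eta\mapsto\omega_0\wedge\eta$ is injective on $\bigwedge^2 E$ because $\omega_0$ has rank $6$, which is the least rank for which this multiplication is injective (for rank $4$ it has a kernel, matching the singularity of $\Theta_{2,N}$). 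What remains is to prove the reverse inclusion $\widehat T_{[q_3]}\sigma_2(\mathbb S_N^+)\subseteq U_0$; natural attempts are to exhibit, at $[q_3]$, enough independent differentials of the (low-degree, $\Spin_{2N}$--isotypic) equations of $\sigma_2(\mathbb S_N^+)$ cutting out $U_0$, or to route the computation through the linear wedge embedding $\sigma_2(\mathbb S_6^+)=\mathbb P^{31}\hookrightarrow\sigma_2(\mathbb S_N^+)$ that carries a $\Spin_{2N}$--translate of $[q_3]$ into the smooth $\mathbb P^{31}$, while controlling the normal directions of $\mathbb S_N^+$ along $[\mathbbm 1]$.

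The hard part is precisely this reverse inclusion: it is the assertion that the Zariski tangent space does \emph{not} jump at the tangential, non-reduced point $[q_3]$, and proving it requires a handle on the equations (equivalently, on the second-order local geometry) of $\sigma_2(\mathbb S_N^+)$ near tangential points, which is not available in general — in contrast with the smooth secant locus $\sigma_2(\mathbb S_N^+)\setminus\tau(\mathbb S_N^+)$, where Terracini's lemma suffices, or the case $N=6$, where the explicit quartic equation of $\tau(\mathbb S_6^+)$ is known \cite{LM07}. This is why the equality is stated only as Conjecture~\ref{conj:sing locus spinor}; the appendix carries out the tangent-space computation in the first nontrivial instances, thereby supporting it.
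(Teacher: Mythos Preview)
The statement is a conjecture and the paper does not prove it; your proposal correctly realizes this and does not claim a complete proof either. Your reduction is sound: by Corollary~\ref{cor:sing locus spinor} and the orbit poset, the conjecture is equivalent to the smoothness of $[q_3]$ in $\sigma_2(\mathbb S_N^+)$, and your limit computation of the Terracini spaces along $t\mapsto[\exp_\wedge(t\omega_0)-\mathbbm 1]$ is correct, yielding the lower bound $\dim\widehat T_{[q_3]}\sigma_2(\mathbb S_N^+)\geq N(N-1)+2$ via $U_0=\mathbb C\mathbbm{1}\oplus\bigwedge^2E\oplus(\omega_0\wedge\bigwedge^2E)\oplus\mathbb C\,\omega_0^{\wedge 3}$. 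You also identify the genuine obstruction, namely the missing upper bound on the Zariski tangent space, which needs control on the equations of $\sigma_2(\mathbb S_N^+)$.

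Where your write-up diverges from the paper is the description of the appendix. The appendix does \emph{not} carry out tangent-space computations at $[q_3]$ in small cases; instead it pursues a different strategy, attempting to show that the secant bundle $\mathbb P\cal E_{\cal L}\to\sigma_2(\mathbb S_N^+)$ is a resolution of singularities away from $\overline{\Sigma_{2,N}}$. Using identifiability (Theorem~\ref{thm:identifiable secant orbits}) and tangential-identifiability (Theorem~\ref{thm:tangential-identifiable tangent orbit}), the paper proves that this map is a \emph{bijection} on $f^{-1}(\sigma_2(\mathbb S_N^+)\setminus\overline{\Sigma_{2,N}})$, but explicitly flags as missing the injectivity of its differential --- which would turn the bijection into an isomorphism and hence force smoothness. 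The two obstructions are cousins: yours asks for enough equations of $\sigma_2(\mathbb S_N^+)$ to cut the Zariski tangent space down to $U_0$, the paper's asks for the secant-bundle parametrization to be an immersion at non-reduced length-$2$ subschemes. Your approach is more direct and would localize the difficulty at a single point, while the paper's is more global and would simultaneously handle all $\Theta_{l,N}$ for $l\geq 3$; but neither is completed, and your final sentence should be corrected accordingly.
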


\section*{Appendix}

\indent \indent The arguments in this section are inspired from \cite{vermeire2001some, vermeire2009singularities, ullery}: the author thanks L. Manivel for suggesting these references. They have been investigated in the attempt of proving the smoothness of both secant and tangent orbits of distance greater or equal than $3$ in the secant variety of lines to a Spinor variety. The idea was to prove that the secant bundle gives a desingularization of the secant variety, but we have just been able to exhibit a bijection (as sets) between dense subsets.\\
\hfill\break
\indent We recall that a line bundle $\cal L$ on a smooth projective variety $X$ is {\em $k$--very ample} if every $0$--dimensional subscheme $\cal Z\subset X$ of length $k+1$ imposes independent conditions on $\cal L$, or equivalently if the restriction map $H^0(X,\cal L)\rightarrow H^0(X,\cal L \otimes \cal O_{\cal Z})$ is surjective: in particular, $\cal L$ is $1$--very ample if and only if it is very ample, and $\cal L$ is $k$--very ample if and only if there does not exist a $0$--dimensional subscheme $\cal Z\subset X$ of length $k+1$ lying on a linear subspace $\mathbb P^{k-1}\subset \mathbb P(H^0(X,\cal L)^\vee)$.

\begin{obs}\label{rmk:O(1) is not 2-very ample}
	The line bundle $\cal O_{\mathbb S_N^+}(1)$ on the Spinor variety $\mathbb S_N^+$ is very ample, giving the embedding in $\mathbb P\left( H^0(\mathbb S_{N}^+,\cal O_{\mathbb S_N^+}(1))^\vee \right)\simeq \mathbb P\left(\bigwedge^{ev}E \right)$, but not $2$--very ample. Indeed, it would be $2$--very ample if and only if there would not exist a $0$--dimensional subscheme of $\mathbb S_N^+$ of length $3$ lying on a line $\mathbb P^1\subset \mathbb P(\bigwedge^{ev}E)$. But we know that the Spinor variety contains lines, defined by pairs of pure spinors having Hamming distance $1$.
\end{obs}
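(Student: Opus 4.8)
The plan is to verify the two assertions separately. For the very ampleness of $\cal O_{\mathbb S_N^+}(1)$ there is essentially nothing to do: this is the homogeneous line bundle attached to the fundamental weight $\omega_N$, so Borel--Weil's theorem gives $H^0(\mathbb S_N^+, \cal O_{\mathbb S_N^+}(1)) \simeq (V_{\omega_N}^{D_N})^\vee = (\bigwedge^{ev}E)^\vee$, and the complete linear system $|\cal O_{\mathbb S_N^+}(1)|$ is by construction the minimal homogeneous embedding $\mathbb S_N^+ \hookrightarrow \mathbb P(\bigwedge^{ev}E)$ fixed in Section \ref{sec:preliminaries}; being a closed immersion, this exhibits $\cal O_{\mathbb S_N^+}(1)$ as very ample. (It is in fact the ample generator of $\Pic(\mathbb S_N^+) \simeq \mathbb Z$.)

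For the failure of $2$--very ampleness I would apply the characterization recalled above: $\cal O_{\mathbb S_N^+}(1)$ is $2$--very ample if and only if no $0$--dimensional subscheme $\cal Z \subset \mathbb S_N^+$ of length $3$ lies on a line $\mathbb P^1 \subset \mathbb P(\bigwedge^{ev}E)$, so it suffices to exhibit one such $\cal Z$. First I would observe that $\mathbb S_N^+$ contains lines: whenever $[a], [b] \in \mathbb S_N^+$ are distinct pure spinors whose associated maximal fully isotropic subspaces satisfy $\dim(H_a \cap H_b) = N-2$ --- equivalently $d([a],[b]) = 1$ by Proposition \ref{prop:hamming distance} --- the whole line $L([a],[b])$ lies in $\mathbb S_N^+$, by the elementary fact from Section \ref{sec:preliminaries} that $d(p,q) = 1$ is equivalent to $L(p,q) \subset X$ with $p \neq q$; such pairs exist as soon as $\dim \mathbb S_N^+ = \binom{N}{2} \geq 1$. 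Then any length-$3$ subscheme supported on one of these lines --- for instance three of its distinct points --- is a length-$3$ subscheme of $\mathbb S_N^+$ contained in the $\mathbb P^1 = L([a],[b])$, which already violates $2$--very ampleness.

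To make the failure of the independent-conditions property explicit, I would note that for such a $\cal Z$ the restriction map $H^0(\mathbb S_N^+, \cal O_{\mathbb S_N^+}(1)) \to H^0(\cal Z, \cal O_{\mathbb S_N^+}(1)|_{\cal Z})$ factors through $H^0(\mathbb P^1, \cal O_{\mathbb P^1}(1))$, the $2$--dimensional space of linear forms on the line carrying $\cal Z$, so its image has dimension at most $2$, which is strictly less than the length $3$ of $\cal Z$; hence $\cal Z$ does not impose independent conditions on $\cal O_{\mathbb S_N^+}(1)$, and the bundle is not $2$--very ample. There is no genuine obstacle here: the only step that deserves a word of justification is the existence of lines in $\mathbb S_N^+$, which is already recorded in Section \ref{sec:preliminaries} (and is also transparent from the description $T_x\mathbb S_N^+ \cap \mathbb S_N^+ \simeq \Gr(2,N)$ together with the pure-spinor tangent representative $[q_1] = [e_1 \wedge e_2]$ of $\Theta_{1,N}$), while the rest is a direct unwinding of the definitions of very ample and $k$--very ample line bundles.
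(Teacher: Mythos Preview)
Your proposal is correct and follows essentially the same approach as the paper: the remark itself already contains its justification, invoking the characterization of $2$--very ampleness via length-$3$ subschemes on lines together with the existence of lines in $\mathbb S_N^+$ arising from Hamming-distance-$1$ pairs, and you simply flesh out these two points. Your additional explicit factorization through $H^0(\mathbb P^1,\cal O_{\mathbb P^1}(1))$ is a nice touch but not needed beyond what the paper states.
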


\paragraph*{Secant bundle.} Let $\cal L=\cal O(1)$ be the very ample line bundle on $X:=\mathbb S_{N}^+$, and let $Hilb_2(X)$ be the Hilbert scheme of $0$-dimensional subschemes of $\mathbb S_{N}^+$ of length $2$. We denote by $\cal E_\cal L$ the locally free sheaf of rank $2$ on $Hilb_2(X)$ having fibers $\left( \cal E_\cal L\right)_\cal Z = H^0(X,\cal L \otimes \cal O_\cal Z)$: formally, $\cal E_\cal L:=(\pi_2)_\ast(\pi_1^\ast)(\cal L)$ where $\pi_1$ and $\pi_2$ are the natural projections from the universal family $\Phi:=\{(x,\cal Z) \in X\times Hilb_2(X) \ | \ x \in \cal Z\}$ onto $X$ and $Hilb_2(X)$ respectively.
From the projection formula, one gets the following global sections of sheaves:
\begin{align*}
H^0(Hilb_2(X),\cal E_\cal L) & = H^0(\Phi, (\pi_X)^\ast \cal L) = H^0( X, \cal L \otimes (\pi_X)_\ast\cal O_\Phi ) = H^0( X,\cal L) \ ,
\end{align*}
where the last equality follows since $\pi_X$ is a proper projection and $(\pi_X)_\ast\cal O_\Phi=\cal O_X$.\\
By pushing forward via $\pi_H$ the morphism $H^0(X,\cal L) \otimes \cal O_\Phi \rightarrow (\pi_X)^\ast \cal L$ (of sheaves on $\Phi$) one gets the {\em evaluation} morphism of sheaves on $Hilb_2(X)$
\[ ev: H^0(X,\cal L)\otimes \cal O_{Hilb_2(X)} \longrightarrow \cal E_\cal L \ , \]
defined on the fibre as the restriction: for any $\cal Z\in Hilb_2(X)$ it holds
\[\begin{matrix}
ev_\cal Z: & H^0(X,\cal L) & \longrightarrow & \left(\cal E_\cal L\right)_\cal Z & = H^0(X,\cal L \otimes \cal O_\cal Z) \\
& s & \mapsto & s_{|_\cal Z}
\end{matrix} \ . \]
\indent As $\cal L$ is very ample, the evaluation 
\[
ev_\cal Z: H^0(X,\cal L)\twoheadrightarrow H^0(X,\cal \otimes \cal O_\cal Z)
\]
is a surjection for any $\cal Z \in Hilb_2(X)$, hence the morphism of sheaves $H^0(X,\cal L)\otimes \cal O_{Hilb_2(X)}\rightarrow \cal E_\cal L$ is surjective. Moreover, again by very ampleness of $\cal L$, one can get the injective morphism
\[ \begin{matrix}
\psi: & Hilb_2(X) & \hookrightarrow & \Gr\big(2,H^0(X,\cal L)\big)\\
& \cal Z & \mapsto & \left[ H^0(X,\cal L)\twoheadrightarrow H^0(X,\cal L \otimes \cal O_\cal Z) \right]
\end{matrix} \ . \]
Now, we may think at $\mathbb P^M=\mathbb P(H^0(X,\cal L)^\vee)$ as to the $1$-dimensional quotients of $H^0(X,\cal L)$, i.e. 
\[ \mathbb P^M=\left\{H^0(X,\cal L)\twoheadrightarrow Q \ | \ \dim Q=1\right\} \ .\]
We denote by $Q^1$ a $1$-dimensional vector space. Then one defines the (first) {\em secant bundle} $\mathbb P\cal E_\cal L$ as the $\mathbb P^1$-bundle on $Hilb_2(X)$ with fibers $(
\mathbb P\cal E_\cal L)_{\cal Z} = \left\{\left(\cal Z, \left[H^0(X,\cal L \otimes \cal O_{\cal Z})\twoheadrightarrow \cal Q^1\right]\right)\right\}$. The following morphism is well-defined:
\[ \begin{matrix}
f: & \mathbb P\cal E_\cal L & \longrightarrow & \mathbb P\left(H^0(X,\cal L)^\vee\right) \\
& \left(\cal Z, \left[H^0(X,\cal L \otimes \cal O_\cal Z)\twoheadrightarrow Q^1\right]\right) & \mapsto & \left[ H^0(X,\cal L)\twoheadrightarrow Q^1 \right]
\end{matrix} \ .\]
Notice that $\left[H^0(X,\cal L)\twoheadrightarrow Q^1\right]\subset \im(f)$ if and only if it factors through a certain $H^0(X,\cal L\otimes \cal O_\cal Z)$ for some $\cal Z \in Hilb_2(X)$: in particular, 
\[ f\left( \mathbb P\cal E_\cal L \right)=\sigma_2(X)\subset \mathbb P(H^0(X,\cal L)^\vee) \ . \]
\indent For any $\cal Z\in Hilb_2(X)$, the fibre $\left( \mathbb P\cal E_\cal L\right)_\cal Z=\left\{(\cal Z, \left[H^0(X,\cal L\otimes \cal O_\cal Z)\twoheadrightarrow Q^1\right])\right\}$ is bijectively mapped via $f$ to the secant line $L(\cal Z)\subset \sigma_2(X)$ intersecting $X$ in the points of $\cal Z$: if $\cal Z=\{p,q\}$ is a reduced subscheme, then the fibre at $\cal Z$ gives the bisecant line $\left(\mathbb P \cal E_\cal L \right)_\cal Z \stackrel{1:1}{\mapsto} L(p,q)$, while if $\cal Z$ is a non-reduced subscheme corresponding to $\{p,v\}$, where $p \in X$ and $v \in T_pX$, then the fibre at $\cal Z$ gives the tangent line $\left(\mathbb P \cal E_\cal L \right)_\cal Z \stackrel{1:1}{\mapsto} L_v(p)=p+tv$. Finally, in our setting, for any two distinct subschemes $\cal Z, \cal Y \in Hilb_2(X)$ the lines $L(\cal Z)$ and $L(\cal Y)$ coincide if and only if they lie in $X$, as the Spinor variety is intersection of quadrics.

\begin{lemma}\label{lemma:singular locus lies in distance-2}
	The restriction of $f$ to $f^{-1}(\sigma_2(\mathbb S_N^+)\setminus \overline{\Sigma_{2,N}})$ is a bijection.
\end{lemma}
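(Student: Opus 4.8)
The plan is to turn the statement into a counting problem. As observed just above, $f$ restricts to a bijection between each fibre $(\mathbb P\cal E_{\cal L})_{\cal Z}$ and the line $L(\cal Z)\subset\sigma_2(\mathbb S_N^+)$, so the fibre $f^{-1}([q])$ has exactly as many points as there are subschemes $\cal Z\in Hilb_2(\mathbb S_N^+)$ with $[q]\in L(\cal Z)$. Hence it suffices to show that for every $[q]\in\sigma_2(\mathbb S_N^+)\setminus\overline{\Sigma_{2,N}}$ there is a \emph{unique} such $\cal Z$; surjectivity of the restriction onto $\sigma_2(\mathbb S_N^+)\setminus\overline{\Sigma_{2,N}}$ is then automatic from $f(\mathbb P\cal E_{\cal L})=\sigma_2(\mathbb S_N^+)$, since any $f$-preimage of a point outside $\overline{\Sigma_{2,N}}$ already lies in $f^{-1}\!\big(\sigma_2(\mathbb S_N^+)\setminus\overline{\Sigma_{2,N}}\big)$. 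First I would combine the orbit poset of Theorem \ref{thm:orbit partition of sec} with the identity $\overline{\Sigma_{2,N}}=\mathbb S_N^+\sqcup\Sigma_{2,N}$ (Theorem \ref{thm:terracini locus spinor}) to get the disjoint decomposition
\[ \sigma_2(\mathbb S_N^+)\setminus\overline{\Sigma_{2,N}}=\Big(\bigsqcup_{l=3}^{\frac{N}{2}}\Sigma_{l,N}\Big)\ \sqcup\ \Big(\bigsqcup_{l=3}^{\frac{N}{2}}\Theta_{l,N}\Big), \]
and then treat the secant orbits and the tangent orbits separately.

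For $[q]\in\Sigma_{l,N}$ with $l\geq 3$: since $\Sigma_{l,N}$ and $\Theta_{m,N}$ ($m\geq 3$) are distinct $\Spin_{2N}$-orbits by Theorem \ref{thm:orbit partition of sec}, $[q]\notin\tau(\mathbb S_N^+)$, so $[q]$ lies on no tangent line and every $\cal Z$ with $[q]\in L(\cal Z)$ is reduced, say $\cal Z=\{[a],[b]\}$ with $[a]\neq[b]$ in $\mathbb S_N^+$. Such $\cal Z$ are exactly the elements of the decomposition locus $Dec_{\mathbb S_N^+}([q])$, which is a singleton by the identifiability of $\Sigma_{l,N}$ (Theorem \ref{thm:identifiable secant orbits}); moreover the unique pair has Hamming distance $d([a],[b])=l\geq 3$, so $L([a],[b])\not\subset\mathbb S_N^+$ and $\cal Z$ genuinely defines a point of $Hilb_2(\mathbb S_N^+)$ whose line is a bisecant line through $[q]$. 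Thus $\#f^{-1}([q])=1$.

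For $[q]\in\Theta_{l,N}$ with $l\geq 3$: here $[q]\in\tau(\mathbb S_N^+)$, but $\Theta_{l,N}$ is an orbit disjoint from $\sigma_2^\circ(\mathbb S_N^+)=\bigsqcup_m\Sigma_{m,N}$, so $[q]$ lies on no bisecant line and every $\cal Z$ with $[q]\in L(\cal Z)$ is non-reduced, supported at a single $[p]\in\mathbb S_N^+$ with $[q]\in L_v([p])\subset T_{[p]}\mathbb S_N^+$. By the tangential-identifiability of $\Theta_{l,N}$ (Theorem \ref{thm:tangential-identifiable tangent orbit}) there is a unique such $[p]$; for that $[p]$ the tangent direction is forced, being the direction of the line joining $[p]$ to $[q]\neq[p]$, so $\cal Z$ is unique and $\#f^{-1}([q])=1$. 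Together with the secant-orbit case and surjectivity this proves the bijection.

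All the substantial input — identifiability, tangential-identifiability, and the orbit poset — is already in place, so I do not expect a genuine obstacle; the one point that needs care is the bookkeeping that separates reduced from non-reduced $\cal Z$ and matches them with elements of $Dec_{\mathbb S_N^+}([q])$ resp. tangent lines through $[q]$. In particular the argument hinges on using the disjointness of the orbits $\Sigma_{l,N}$ and $\Theta_{m,N}$ for $l,m\geq 3$ to exclude that a point of $\sigma_2(\mathbb S_N^+)\setminus\overline{\Sigma_{2,N}}$ could lie simultaneously on a bisecant line and on a tangent line, which would produce extra fibre points and destroy injectivity.
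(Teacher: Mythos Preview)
Your proof is correct and follows essentially the same approach as the paper: both arguments use the orbit partition of Theorem~\ref{thm:orbit partition of sec} together with the identifiability of $\Sigma_{l,N}$ (Theorem~\ref{thm:identifiable secant orbits}) and the tangential-identifiability of $\Theta_{l,N}$ (Theorem~\ref{thm:tangential-identifiable tangent orbit}) for $l\geq 3$, plus the disjointness of these orbits to separate the bisecant and tangent contributions to each fibre. Your write-up is in fact slightly more careful than the paper's in distinguishing reduced from non-reduced length-$2$ subschemes and in noting that the tangent direction is forced once the support point is fixed.
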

\begin{proof}
	By Theorem \ref{thm:identifiable secant orbits}, the points in the secant orbits $\Sigma_{l,N}$ for $l\geq 3$ are identifiable, thus any two bisecant lines (given by points in $\Sigma_{l,N}$ for $l\geq 3$, except for the two points in which they intersect $\mathbb S_{N}^+$) do not intersect away from $\mathbb S_{N}^+$. By Theorem \ref{thm:tangential-identifiable tangent orbit}, the points in the tangent orbits $\Theta_{l,N}$ for $l\geq 3$ are tangential-identifiable, liying on a unique tangent line, hence any two tangent lines do not intersect away from $\overline{\Sigma_{2,N}}$. Finally, by the orbit partition of $\sigma_2(\mathbb S_{N}^+)$ in Theorem \ref{thm:orbit partition of sec}, since $\Theta_{l,N}\neq \Sigma_{l,N}$ for any $l\geq 3$, a tangent line and a bisecant line never intersect away from $\overline{\Sigma_{2,N}}$. It follows that $f: \mathbb P\cal E_\cal L\rightarrow \sigma(\mathbb S_{N}^+)$ is a bijection away from $f^{-1}\left(\overline{\Sigma_{2,N}} \right)$.
\end{proof}

\begin{obs*}
	We point out that Vermeire in \cite[Proposition 1.2, Theorem 2.2]{vermeire2009singularities} and Ullery in \cite[Lemma 1.1]{ullery} prove the smoothness of $\sigma_2(X)\setminus X$ under the assumption of the line bundle giving the embedding to be $3$-very ample: this assumption is the one {\em separating secant and tangent lines} away from $X$. In our case, the previous hypothesis is not satisfied (see Remark \ref{rmk:O(1) is not 2-very ample}). Neverthless, at a set-level, the identifibiability and the tangential-identifiability allow to separate secant and tangent lines (away from $\overline{\Sigma_{2,N}}$) as well. What is missing is that the above restriction has injective differential, so that it would be an isomorphism.
\end{obs*}

\printbibliography

\end{document}